\newtheorem{num}{Numbering}[section]
\newtheorem{theorem}[num]{Theorem}
\newtheorem{lemma}[num]{Lemma}
\newtheorem{corollary}[num]{Corollary}
\newtheorem{proposition}[num]{Proposition}
\title{Extensions of Scott's Graph Model and Kleene's Second Algebra}
\author{Jaap van Oosten\footnote{Corresponding author. Department of Mathematics, Utrecht University, {\tt j.vanoosten@uu.nl}} \and Niels Voorneveld\footnote{Department of Mathematics, University of Ljubljana, {\tt niels.voorneveld@fmf.uni-lj.si}}}
\date{October 13, 2016}
\begin{document}
\maketitle

\begin{abstract}
	We use a way to extend partial combinatory algebras (pcas) by forcing them to represent certain functions. In the case of Scott's Graph model, equality is computable relative to the complement function. However, the converse is not true. This creates a hierarchy of pcas which relates to similar structures of extensions on other pcas. We study one such structure on Kleene's second model and one on a pca equivalent but not isomorphic to it. For the recursively enumerable sub pca of the Graph model, results differ as we can compute the (partial) complement function using the equality.
\end{abstract}

\section*{Introduction}
In this paper we study extensions of various partial combinatory algebras; mainly partial combinatory algebra structures on the power set of the natural numbers and the set of all functions from natural numbers to natural numbers.

Partial combinatory algebras have been useful in the past for devising realizability interpretations of intuitionistic formal systems. However, there is another side to them: they can be viewed as paradigms of computation. It is this view that has been put forward in several publications of the first author, but it has its origin in the seminal thesis \cite{Long} of John Longley. Longley defined a notion of morphism between partial combinatory algebras which, whilst fundamental in the study of realizability toposes, also has a clear computational meaning: a morphism $\mathcal{A} \to \mathcal{B}$ is a way to simulate the computations of $\mathcal{A}$ in $\mathcal{B}$.

In \cite{vOrelrec}, the first author showed how, given a partial combinatory algebra $\mathcal{A}$ and an arbitrary partial endofunction $f$ on $\mathcal{A}$, one can construct, in a universal way, a partial combinatory algebra $\mathcal{A}[f]$ in which $f$ is ``computable''; the construction is a straightforward generalization of Turing's notion of ``oracle''. The construction gives a clear meaning to statements like ``$f$ is computable in $g$ relative to the partial combinatory algebra $\mathcal{A}$''.

This paper is structured as follows. After a section on preliminaries which contains all basic definitions, in section 2 we introduce the various extensions we are interested in. We look at ${\cal P}(\mathbb{N})[C]$, where ${\cal P}(\mathbb{N})$ is Scott's graph model and $C:{\cal P}(\mathbb{N})\to {\cal P}(\mathbb{N})$ is the complement function. We prove that ${\cal P}(\mathbb{N})[C]$ is decidable. Then we look at related extensions of ${\cal K}_2$ (Kleene's pca on the set of functions $\mathbb{N} \to \mathbb{N}$) and a pca (called $2^{\omega}$) on the set of functions $\mathbb{N} \to \{ 0,1\}$. We characterize the topologies on these pcas which are of interest (interacting nicely with the applicative structure). We prove that the pcas ${\cal K}_2$ and $2^{\omega}$ are equivalent, but not isomorphic (to our knowledge, the first example of this phenomenon in the literature).

In section 3, Independence results, we prove that (conversely to the result in the previous section), the complement function is {\em not\/} computable, relative to ${\cal P}(\mathbb{N})$, in a decisioin function for equality; and we have the analogous results for the corresponding extensions of ${\cal K}_2$ and $2^{\omega}$. The methods used also yield a non-existence result for decidable applicative morphisms into ${\cal P}(\mathbb{N})$, ${\cal K}_2$ and $2^{\omega}$.

In a final section we discuss recursive or r.e.\ sub-partial combinatory algebras. Also here we have `complement-like' functions which however, now are strictly partial (as in the RE submodel of ${\cal P}(\mathbb{N})$). The results obtained are different: the partial complement function on RE is computable in the equality relation.

We believe that our work is a contribution towards the Higher-Order Computability programme of Longley and Normann (\cite{LongNor}).

This paper originates in the second author's master dissertation \cite{Voorneveld}. The first author acknowledges with gratitude the hospitality of the mathematics department of the University of Ljubljana, where he spent a sabbatical stay in the fall of 2016.

\section{Preliminaries}\label{prelimsection}
A {\em partial applicative structure\/} (pas) is a set $\mathcal{A}$ together with a partial {\em application\/} function $\mathcal{A}\times \mathcal{A}\to \mathcal{A}$, which we write $a,b\mapsto ab$. We write $ab{\downarrow}$ to mean that the pair $(a,b)$ is in the domain of the application function. If we have a more complicated term $s$, we write $s{\downarrow}$ to mean that, not only, $s$ denotes but also all subterms of $s$ do.

A {\em partial combinatory algebra\/} (pca)  is a pas satisfying the following axiom: there are elements ${\sf k}$ and $\sf s$ in $\mathcal{A}$ such that for all $a,b,c\in \mathcal{A}$:\begin{itemize}
	\item[] $({\sf k}a)b=a$ (in particular, ${\sf k}a{\downarrow}$).
	\item[] $({\sf s}a)b{\downarrow}$, and, whenever $(ac)(bc){\downarrow}$, $(({\sf s}a)b)c{\downarrow}$ and $(({\sf s}a)b)c=(ac)(bc)$.\end{itemize}
From now on, we economise on brackets, and {\em associate to the left}: we write, e.g., $sabc$ for $((sa)b)c$ and $aa_1\cdots a_n$ for $(\cdots ((aa_1)a_2)\cdots )a_n$. In a nontrivial pca, the application is never associative so $abc$ is in general different from $a(bc)$.

A term $t$ composed of elements of $\mathcal{A}$, variables and the application function, represents a partial function $\mathcal{A}^n\to \mathcal{A}$
(where $n$ is the number of variables in $t$), and again we write $t(a_1,\ldots ,a_n){\downarrow}$ to mean that the tuple $(a_1,\ldots ,a_n)$ is in the domain of the function.

Now suppose the term $t$ has variables $x_1,\ldots ,x_{n+1}$. There is an element ${<} x_1\cdots x_{n+1}{>}t$ with the following property: for every $n+1$-tuple $a_1,\ldots ,a_{n+1}$ from $\mathcal{A}$ we have\begin{itemize}
	\item[] $({<}x_1\cdots x_{n+1}{>}t)a_1\cdots a_{n}{\downarrow}$
	\item[] If $t(a_1,\ldots ,a_{n+1}){\downarrow}$ then $({<}x_1\cdots x_{n+1}{>}t)a_1\cdots a_{n+1}{\downarrow}$, and both have the same value in $\mathcal{A}$.\end{itemize}
For example, we might use ${<}xyz{>}xz(yz)$ for the element ${\sf s}$ of $\mathcal{A}$.

A pca contains {\em booleans\/} $\sf T$ and $\sf F$, and a {\em definition by cases\/} operator $C\in \mathcal{A}$, satisfying for all $a,b\in \mathcal{A}$:\begin{itemize}
	\item[] $C{\sf T}ab = a$ and $C{\sf F}ab=b$\end{itemize}
We usually refer to the term $Cxab$ as\begin{itemize}
	\item[] {\sf If} $x$ {\sf then} $a$ {\sf else} $b$\end{itemize}
Note that this gives us the boolean operations `and' and `not':\begin{itemize}
	\item[] {\sf and} $ab$  = {\sf If} $a$ {\sf then} ({\sf if} $b$ {\sf then T else F}) {\sf else F}
	\item[] {\sf not} $a$ = {\sf If} $a$ {\sf then F else T}\end{itemize}
In every pca, one can code pairs and sequences; we shall use the notations $[a,b]$ and $[a_0,\ldots ,a_{n-1}]$ for codings of the pair $(a,b)$ and the sequence $(a_0,\ldots ,a_{n-1})$ respectively.

Let $f:\mathcal{A}^n\to \mathcal{A}$ be a partial function. We say that $f$ is {\em representable in $\mathcal{A}$} if there is some $a_f\in \mathcal{A}$ such that for every $n$-tuple $a_1,\ldots ,a_n$ in the domain of $f$, we have $a_fa_1\cdots a_n=f(a_1,\ldots ,a_n)$.

For a pca $\mathcal{A}$ and a subset $R\subset \mathcal{A}^n$, we call the set $R$ {\em decidable in\/} $\mathcal{A}$ if the function which sends each coded $n$-tuple $[a_1,\ldots ,a_n]$ to {\sf T} if $(a_1,\ldots ,a_n)\in R$, and to {\sf F} otherwise, is representable in $\mathcal{A}$. The pca $\mathcal{A}$ is called {\em decidable\/} if the equality relation is decidable in $\mathcal{A}$.
\subsection{Topology}\label{topologysection}
In this paper we shall be interested in pca structures on sets as ${\cal P}(\mathbb{N})$ or $\mathbb{N}^\mathbb{N}$, which have several interesting topologies. We introduce the following terminology for studying the interaction of these topologies with the pca structure.

A topology on a pca $\mathcal{A}$ is {\em repcon\/} if every partial representable function is continuous on its domain (as subspace of $\mathcal{A}$); a topology is {\em conrep\/} if every partial continuous function is representable.

Clearly, the discrete and indiscrete topologies are always repcon. We refer to these topologies as the trivial topologies.
\subsection{Applicative morphisms}\label{appmorsection}
Let $\mathcal{A}$ and $\mathcal{B}$ be pcas and $\gamma$ a total relation from $\mathcal{A}$ to $\mathcal{B}$. That is, $\gamma$ assigns to every $a\in \mathcal{A}$ a nonempty subset $\gamma (a)$ of $\mathcal{B}$.

A partial function $f:\mathcal{A}^n\to \mathcal{A}$ is said to be {\em representable in $\mathcal{B}$ with respect to $\gamma$}, if there is an element $b_f\in \mathcal{B}$ such that for any $n$-tuple $(a_1,\ldots ,a_n)$ in the domain of $f$ we have: whenever $b_1\in\gamma (a_1),\ldots ,b_n\in\gamma (a_n)$ then $b_fb_1\cdots b_n{\downarrow}$ and is an element of $\gamma (f(a_1,\ldots ,a_n))$.
Such a total relation $\gamma$ is called an {\em applicative morphism\/} from $\mathcal{A}$ to $\mathcal{B}$ if the application function of $\mathcal{A}$ is representable in $\mathcal{B}$ w.r.t.\ $\gamma$: so there should be an element $r\in \mathcal{B}$ (the {\em realizer\/} of the applicative morphism $\gamma$) such that whenever $aa'{\downarrow}$ in $\mathcal{A}$ and $b\in \gamma (a),b'\in\gamma (a')$, we have $rbb'\in
\gamma (aa')$. When we view pcas as models of computation, an applicative morphism can be seen as a way to simulate $\mathcal{A}$-computations in $\mathcal{B}$.

Given two applicative morphisms $\gamma ,\delta :\mathcal{A} \to \mathcal{B}$ we say $\gamma\leq\delta$ if there is some $s\in \mathcal{B}$ such that for all $a\in \mathcal{A}$ and $b\in\gamma (a)$, $sb{\downarrow}$ and $sb\in\delta (a)$.

With composition of relations (which preserves the preorder $\leq$), and the identity relations, we have a preorder-enriched category {\bf PCA} of pcas, applicative morphisms and inequalities.

We single out a subcategory of {\bf PCA}. An applicative morphism $\gamma :\mathcal{A} \to \mathcal{B}$ is {\em decidable\/} if there is an element $d\in \mathcal{B}$ which satisfies the following: if ${\sf T}_{\mathcal{A}}$, ${\sf F}_{\mathcal{A}}$ denote the booleans in $\mathcal{A}$, then for $b\in\gamma ({\sf T}_{\mathcal{A}}), c\in\gamma ({\sf F}_{\mathcal{A}})$ we have $db={\sf T}_{\mathcal{B}}$, $dc={\sf F}_{\mathcal{B}}$ (where of course ${\sf T}_{\mathcal{B}}$ and ${\sf F}_{\mathcal{B}}$ are the booleans in $\mathcal{B}$).

The theory of applicative morphisms is due to John Longley (\cite{Long}).
\subsection{Extensions of pcas by functions}\label{extensionsection}
In \cite{vOrelrec} the following theorem is proved:
\begin{theorem}\label{extensiontheorem} For any pca $\mathcal{A}$ and partial endofunction $f$ on $\mathcal{A}$, there is a pca $\mathcal{A}[f]$ with the same underlying set $\mathcal{A}$, and a decidable applicative morphism $\iota _f:\mathcal{A} \to \mathcal{A}[f]$, which is the identity relation on $\mathcal{A}$, such that the function $f$ is representable in $\mathcal{A}[f]$ with respect to $\iota _f$, and moreover, for any decidable applicative morphism $\gamma :\mathcal{A} \to \mathcal{B}$ such that $f$ is representable in $\mathcal{B}$ with respect to $\gamma$, there is a unique factorisation of $\gamma$ as $\gamma =\gamma _f\iota _f$, for a decidable morphism $\gamma _f:\mathcal{A}[f]\to \mathcal{B}$.\end{theorem}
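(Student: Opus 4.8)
The plan is to build $\mathcal{A}[f]$ explicitly as a structure of \emph{oracle dialogues}, in the spirit of Turing's relativised computation: a computation relative to $f$ runs in the ambient pca $\mathcal{A}$ but may pause to ask for a value of $f$ and then resume. Concretely, I would fix the booleans $\mathsf{T},\mathsf{F}$ and the pairing of $\mathcal{A}$, and read an intermediate state $d\in\mathcal{A}$ either as a \emph{return} $[\mathsf{T},v]$ (``the value is $v$'') or as a \emph{query} $[\mathsf{F},q,k]$ (``ask the oracle for $f(q)$, then resume by applying $k$ to the answer''). Using the recursion theorem inside $\mathcal{A}$ I would define a partial function $\mathrm{ev}$ on $\mathcal{A}$ by $\mathrm{ev}([\mathsf{T},v])=v$ and $\mathrm{ev}([\mathsf{F},q,k])=\mathrm{ev}(k\,(f(q)))$; this is defined exactly when the dialogue halts after finitely many queries, all lying in the domain of $f$, and with every intervening $\mathcal{A}$-application converging. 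The underlying set of $\mathcal{A}[f]$ is that of $\mathcal{A}$, and the new application is $a\ast b:=\mathrm{ev}(ab)$, where $ab$ is the application of $\mathcal{A}$.

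I would then verify that $(\mathcal{A},\ast)$ is a pca. The combinator $\mathsf{k}$ is easy: $\mathsf{k}_f={<}a{>}[\mathsf{T},{<}b{>}[\mathsf{T},a]]$ is a query-free dialogue, and $(\mathsf{k}_f\ast a)\ast b=a$ follows directly from the clauses for $\mathrm{ev}$. The combinator $\mathsf{s}$ is the crux, and I expect it to be the main obstacle. Here I would first build, again by the recursion theorem in $\mathcal{A}$, a \emph{splicing} operation $\mathrm{bind}$ on dialogues which runs a dialogue, replaces each return $[\mathsf{T},v]$ by the continuation applied to $v$, and passes every query through unchanged. The key lemma is that $\mathrm{ev}$ converts splicing into sequencing: $\mathrm{ev}(\mathrm{bind}(d,k))=\mathrm{ev}(k\,(\mathrm{ev}(d)))$, with the two sides defined on exactly the same inputs. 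Given this, I would take $\mathsf{s}_f$ so that $\mathsf{s}_f\ast a\ast b$ returns the closure ${<}c{>}\mathrm{bind}(ac,{<}u{>}\mathrm{bind}(bc,{<}v{>}uv))$; two applications of the splicing lemma then show $\mathsf{s}_f\ast a\ast b\ast c=(a\ast c)\ast(b\ast c)$ together with the required convergence clause. Finally, $\iota_f$ is the identity relation: it realises the application of $\mathcal{A}$ via the query-free wrapper ${<}a{>}[\mathsf{T},{<}b{>}[\mathsf{T},ab]]$, and, taking the booleans of $\mathcal{A}[f]$ to be those of $\mathcal{A}$, it is decidable; and $f$ is representable with respect to $\iota_f$ by $a_f={<}q{>}[\mathsf{F},q,{<}w{>}[\mathsf{T},w]]$, since $a_f\ast q=f(q)$ on the domain of $f$.

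For the universal property, let $\gamma:\mathcal{A}\to\mathcal{B}$ be decidable with $f$ represented, with respect to $\gamma$, by some $\phi\in\mathcal{B}$. Because $\iota_f$ is the identity relation, composing any relation with it leaves it unchanged, so the equation $\gamma=\gamma_f\iota_f$ forces $\gamma_f$ to be $\gamma$ itself, now regarded as a relation from $\mathcal{A}[f]$; this gives uniqueness up to the equivalence of the preorder and shows $\gamma_f$ is again decidable. The one thing left to prove is that $\gamma$ realises the new application $\ast$, and this mirrors the construction of $\mathsf{s}_f$: I would simulate $\mathrm{ev}$ inside $\mathcal{B}$, using the realiser of $\gamma$ for the $\mathcal{A}$-applications, the decidability of $\gamma$ together with the transfer of pairing and definition by cases across decidable morphisms to decode the tags $[\mathsf{T},\cdot]$ and $[\mathsf{F},\cdot,\cdot]$, the element $\phi$ for the oracle steps, and the recursion theorem in $\mathcal{B}$ to drive the iteration. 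The resulting realiser exhibits $\gamma_f=\gamma$ as a decidable applicative morphism $\mathcal{A}[f]\to\mathcal{B}$ with $\gamma_f\iota_f=\gamma$, completing the factorisation.
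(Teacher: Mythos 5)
Your proposal is correct and does prove the theorem, but it implements the oracle idea differently from the construction the paper works with (quoted from \cite{vOrelrec}, whose application is spelled out in section~\ref{extensionsection}). The paper's application is \emph{history-passing}: $a{\cdot}_fb=c$ iff there are $e_0,\ldots ,e_{n-1}$ with $a[b,f(e_0),\ldots ,f(e_{i-1})]=[{\sf F},e_i]$ for $i<n$ and $a[b,f(e_0),\ldots ,f(e_{n-1})]=[{\sf T},c]$; the state of a computation is just the record of oracle answers so far, the program $a$ is re-applied to the whole record at each step, and no continuations or fixed-point constructions are needed to define the application. You instead store an explicit continuation in each state, define ${\rm ev}$ by recursion, and reduce both the verification of ${\sf s}$ and the universal property to one splicing lemma for ${\rm bind}$ (essentially the monad laws for oracle dialogues). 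That buys a cleaner, more modular verification: your two applications of the splicing lemma replace the bookkeeping that the history-passing proof needs in order to interleave the dialogues for $ac$, $bc$ and $(a{\cdot}_fc)(b{\cdot}_fc)$ inside a single history. The price is heavier machinery, and here is the one real slip to fix: ${\rm ev}$ cannot be defined ``using the recursion theorem inside $\mathcal{A}$'', since its second clause calls the oracle $f$, which in general is not representable in $\mathcal{A}$ (that is the whole point of the construction); ${\rm ev}$ must be defined externally, by induction on the number of queries, as the least fixed point of your two clauses. Only ${\rm bind}$, which merely rearranges syntax and passes queries through untouched, lives inside $\mathcal{A}$, and the recursion theorem is legitimately used there and again inside $\mathcal{B}$ for the simulation proving the universal property. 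Your uniqueness argument (composition with the identity relation forces $\gamma_f=\gamma$ on the nose) is exactly the observation the paper itself makes after lemma~\ref{adjunctionextension}. Finally, note that your $\mathcal{A}[f]$ is isomorphic to, but not literally identical with, the paper's: both satisfy the universal property, so this is irrelevant for the theorem as stated, but the paper later exploits the specific history-passing form of the application (the ``computation sequences'' in proposition~\ref{countableRep}), so its subsequent arguments would need to be rephrased in terms of your continuation-style states before they transfer.
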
 
In this paper we shall need a detail in the construction of $\mathcal{A}[f]$. The application in $\mathcal{A}[f]$, written $a,b\mapsto a{\cdot}_fb$, is defined as follows:
\begin{itemize}
	\item[] $a{\cdot}_fb=c$ iff there is a sequence $e_0,\ldots ,e_{n-1}$ of elements of $\mathcal{A}$ such that for all $i<n$,
	$$a[b,f(e_0),\ldots ,f(e_{i-1})]=[{\sf F},e_i]$$
	and $a[b,f(e_0),\ldots ,f(e_{n-1})]=[{\sf T},c]$.
\end{itemize}
\section{Variations on Scott's Graph Model and \\ Kleene's Second Model}\label{variationssection}
Let us agree on the following conventions for the set $\mathbb{N}$  of natural numbers: $0\in \mathbb{N}$; by $[{\cdot},{\cdot}]$ and $[{\cdot},\ldots ,{\cdot}]$ we denote standard pairing and sequence coding functions; we employ the following coding of finite subsets of $\mathbb{N}$: $p=e_k$ (the natural number $k$ codes the finite set $p$) if $k=\sum_{i\in p}2^i$ (so, $e_0=\emptyset$, $e_{2^n}=\{ n\}$ etc.).

Scott's Graph Model is the pca structure on ${\cal P}(\mathbb{N})$ given by the following application:
$$A{\circ}B\; =\; \{ n\, |\, \exists m([m,n]\in A, e_m\subseteq B)\}$$
It is well-known that the functions ${\cal P}(\mathbb{N})\to {\cal P}(\mathbb{N})$ which are representable in the pca ${\cal P}(\mathbb{N})$ are precisely the Scott-continuous functions. The {\em Scott topology\/} on ${\cal P}(\mathbb{N})=\{ 0,1\} ^\mathbb{N}$ is the product topology on the countable product of copies of $\{ 0,1\}$, where $\{ 0,1\}$ has the Sierpinski topology (with open sets $\emptyset ,\{ 1\} , \{ 0,1\}$). Concretely, a subset $X$ of ${\cal P}(\mathbb{N})$ is open if for any $A\in X$ there is a finite subset $p$ of $A$ such that every superset of $p$ is in $X$. We use the notation
$$U_p\; =\;\{ A\subseteq \mathbb{N}\, |\, p\subseteq A\}$$
(for finite $p$) for basis elements in this topology.

\noindent In the terminology of section~\ref{topologysection}, the Scott topology is both conrep and repcon for Scott's graph model.

We have the following result about minimal repcon topologies on ${\cal P}(\mathbb{N})$:
\begin{lemma}\label{minimalrepconPw}Let $\cal T$ be a nontrivial repcon topology for ${\cal P}(\mathbb{N})$. Then $\cal T$ either contains the Scott topology, or the Alexandroff topology (which contains precisely the downward closed sets w.r.t.\ $\subseteq$).\end{lemma}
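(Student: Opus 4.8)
The plan is to reduce everything to a single separation question about the two extreme points $\emptyset$ (the bottom) and $\mathbb{N}$ (the top), and then exploit two dual families of representable (hence, by repcon, $\cal T$-continuous) functions. The guiding observation is that every nonempty Scott-open set contains $\mathbb{N}$, while every nonempty downward closed set contains $\emptyset$, so the two target topologies are distinguished precisely by how they treat $\mathbb{N}$ versus $\emptyset$. Concretely, I would first establish two bootstrapping facts. For the Scott side: for finite $p$, the function $\chi_p$ sending $A$ to $\mathbb{N}$ if $p\subseteq A$ and to $\emptyset$ otherwise is Scott-continuous, so if some $W\in\cal T$ satisfies $\mathbb{N}\in W$ and $\emptyset\notin W$, then $\chi_p^{-1}(W)=U_p\in\cal T$ for every finite $p$, whence $\cal T$ contains the Scott topology. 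For the Alexandroff side, dually: the function $e_C$ sending $A$ to $\emptyset$ if $A\subseteq C$ and to $\mathbb{N}$ otherwise is Scott-continuous (it factors as $A\mapsto A\setminus C$ followed by the nonemptiness test $Y\mapsto\mathbb{N}$ if $Y\neq\emptyset$ else $\emptyset$), so if some $W\in\cal T$ satisfies $\emptyset\in W$ and $\mathbb{N}\notin W$, then $e_C^{-1}(W)=\{A\mid A\subseteq C\}\in\cal T$ for every $C$; since every downward closed set is a union of such principal down-sets, $\cal T$ then contains the Alexandroff topology.

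With these two implications in hand, the lemma reduces to showing that a nontrivial repcon $\cal T$ must contain an open set separating $\mathbb{N}$ and $\emptyset$, i.e.\ an open set containing exactly one of them. I would argue by contraposition: suppose $\mathbb{N}$ and $\emptyset$ are topologically indistinguishable, meaning every open set contains both or neither. Since continuous maps preserve indistinguishability, applying the Scott-continuous functions $i_A(X)=A\cap X$ (which satisfy $i_A(\emptyset)=\emptyset$ and $i_A(\mathbb{N})=A$) shows that every $A$ is indistinguishable from $\emptyset$. By transitivity all points of ${\cal P}(\mathbb{N})$ are then pairwise indistinguishable, so every nonempty open set equals ${\cal P}(\mathbb{N})$, i.e.\ $\cal T$ is indiscrete, contradicting nontriviality. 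Hence the separating open set exists, and feeding it into whichever of the two bootstrapping facts applies yields the claim.

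The routine part is verifying Scott-continuity of $\chi_p$, $e_C$, and $i_A$: all are monotone and preserve directed unions, the finiteness of $p$ being exactly what makes $\chi_p$ preserve directed suprema. The only real structural decision is organizing the case split around the behaviour at $\emptyset$ and $\mathbb{N}$. I expect the main (if small) obstacle to be the separation step: one must notice that indistinguishability of merely the two endpoints $\emptyset,\mathbb{N}$ already propagates to all points through the maps $i_A$, collapsing the whole topology. This is where nontriviality is essential, and it is precisely what pins the answer down to these two minimal topologies rather than admitting some third alternative.
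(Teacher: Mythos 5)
Your proof is correct, and its engine is the same as the paper's: by repcon, two-valued Scott-continuous test maps are $\mathcal{T}$-continuous, and pulling a suitable open set back along them yields either the basic Scott opens $U_p$ (your $\chi_p$) or the principal down-sets $\{A \mid A \subseteq C\}$ (your $e_C$), which generate the respective topologies under unions. Where you genuinely diverge is the organization. The paper takes an arbitrary nontrivial open $U$ and splits directly on whether $\emptyset \in U$, letting its test functions take an adaptively chosen value $C \in U$ (resp.\ $C \notin U$) as the ``top'' output rather than your fixed value $\mathbb{N}$; with that choice the pullback computation goes through for \emph{any} nontrivial open, and no separation property is needed. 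You instead fix the test values to be $\emptyset$ and $\mathbb{N}$, which forces the extra step of showing that some open set contains exactly one of these two points; your indistinguishability-propagation argument via $i_A(X) = A \cap X$ (indistinguishable endpoints would make all points pairwise indistinguishable, hence $\mathcal{T}$ indiscrete) handles this correctly. That separation lemma is the only content not present in the paper: it buys a cleaner structural picture (any nontrivial repcon topology must separate the bottom from the top, and the direction of the separation decides Scott versus Alexandroff), while the paper's adaptive choice of $C$ makes for a shorter proof that skips the issue entirely.
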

\begin{proof} Since $\cal T$ is nontrivial, it contains an open set $U$ which is neither $\emptyset$ nor ${\cal P}(\mathbb{N})$. We distinguisg two cases:
	\medskip
	
	\noindent {\sc case} 1: $\emptyset \notin U$. Take $C\in U$, and let $p=e_n$ be an arbitrary finite subset of $\mathbb{N}$. Let $A=\{ [n,m]\, |\, m\in C\}$. The map $A{\circ}(-)$ is continuous for $\cal T$, so the set $\{B\, |\, A{\circ}B\in U\}$ is in $\cal T$. Now $A{\circ}B=C$ if $p\subseteq B$ and $\emptyset$ else; because $\emptyset\not\in U$ by assumption, the set $U_p$ is in $\cal T$. Since $p$ was arbitrary, $\cal T$ contains the Scott topology.
	\medskip
	
	\noindent {\sc case} 2: $\emptyset\in U$. Take $C\not\in U$; let $S\subseteq \mathbb{N}$ arbitrary. Let $A=\{ [2^n,m]\, |\, n\not\in S, m\in C\}$. Again, the map $A{\circ}(-)$ is $\cal T$-continuous, so $\{ B\, |\, A{\circ}B\in U\}$ is in $\cal T$. Now $A{\circ}B=\emptyset$ if $B\subseteq S$ and $C$ otherwise; by assumption on $C$ and $U$ we see that $\{ B\, |\, A{\circ}B\in U\} =\{ B\, |\, B\subseteq S\}$. So $\cal T$ contains the Alexandroff topology.
\end{proof}

\noindent A function which is definitely {\em not\/} Scott-continuous is the complement function $C(A)= \mathbb{N}-A$ on ${\cal P}(\mathbb{N})$. We shall study ${\cal P}(\mathbb{N})[C]$, the pca formed as in section~\ref{extensionsection}. According to the theorem quoted there, we have a decidable applicative morphism $\iota _C:{\cal P}(\mathbb{N})\to {\cal P}(\mathbb{N})[C]$ with the stated universal property.

We have the following corollary of lemma~\ref{minimalrepconPw}, about repcon topologies for ${\cal P}(\mathbb{N})[C]$:
\begin{corollary}\label{PCnorepcon}The only repcon topologies for ${\cal P}(\mathbb{N})[C]$ are the trivial ones.\end{corollary}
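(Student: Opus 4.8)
The plan is to reduce to Lemma~\ref{minimalrepconPw} and then exploit the two features that distinguish ${\cal P}(\mathbb{N})[C]$ from ${\cal P}(\mathbb{N})$: the complement is now representable, while all Scott-continuous functions remain representable. First I would check that a repcon topology $\cal T$ for ${\cal P}(\mathbb{N})[C]$ is automatically repcon for ${\cal P}(\mathbb{N})$. If $f$ is Scott-continuous, represented in ${\cal P}(\mathbb{N})$ by $a_f$, and $r$ is the realizer of the identity applicative morphism $\iota_C$, then $r{\cdot}_Ca_f$ represents $f$ in ${\cal P}(\mathbb{N})[C]$, because $r{\cdot}_Ca_f{\cdot}_CA=a_f{\circ}A=f(A)$ for every $A$. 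Hence every Scott-continuous function is $\cal T$-continuous, and if $\cal T$ is nontrivial, Lemma~\ref{minimalrepconPw} tells us $\cal T$ contains the Scott topology or the Alexandroff topology. Moreover, since $C$ is representable in ${\cal P}(\mathbb{N})[C]$ it is $\cal T$-continuous, and being an involution it is a self-homeomorphism; thus $\cal T$ is invariant under complementation. My aim in each of the two cases is to force $\cal T$ to be discrete, contradicting nontriviality.

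The Alexandroff case is immediate. There $\cal T$ contains every downward closed set, and complement-invariance then forces it to contain every upward closed set as well, since the complement of a down-set is an up-set and vice versa. Intersecting the down-set $\{B\mid B\subseteq A\}$ with the up-set $\{B\mid A\subseteq B\}$ yields the singleton $\{A\}$, so every singleton is open and $\cal T$ is discrete.

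The Scott case is the heart of the argument. Complementation sends each basic Scott-open $U_p$ to $\{B\mid B\cap p=\emptyset\}$, so for every $n$ both $\{B\mid n\in B\}$ and $\{B\mid n\notin B\}$ lie in $\cal T$; that is, $\cal T$ refines the Cantor (product-of-discrete-two-point) topology. I would then isolate one point using repcon: the Scott-continuous map sending every nonempty $A$ to $\mathbb{N}$ and $\emptyset$ to $\emptyset$ is representable in ${\cal P}(\mathbb{N})[C]$, and its preimage of the open set $\{B\mid 0\notin B\}$ is exactly $\{\emptyset\}$, so $\{\emptyset\}$ is $\cal T$-open. To propagate this I would observe that, for each $S$, the symmetric-difference map $\sigma_S(A)=(A\cap C(S))\cup(C(A)\cap S)$ is assembled from the complement, intersection, union and constants, hence is representable in ${\cal P}(\mathbb{N})[C]$ and $\cal T$-continuous; as $\sigma_S$ is its own inverse it is a $\cal T$-homeomorphism carrying $\{\emptyset\}$ onto $\{S\}$. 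Therefore every singleton is open and $\cal T$ is again discrete.

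The step I expect to be delicate is precisely this last propagation. Because repcon is not monotone in the topology --- both the discrete and the indiscrete topology are repcon --- one cannot argue merely that $\cal T$ has become ``too fine''. The decisive point is that ${\cal P}(\mathbb{N})[C]$, unlike ${\cal P}(\mathbb{N})$, represents the symmetric-difference involutions, which act transitively on ${\cal P}(\mathbb{N})$ through homeomorphisms; this is exactly what upgrades a single isolated point to full discreteness. Verifying that $\sigma_S$ is genuinely representable in ${\cal P}(\mathbb{N})[C]$, and that the chosen function really forces $\{\emptyset\}$ to be open, are the places where some care is needed.
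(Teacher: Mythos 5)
Your proof is correct, and it shares the paper's overall skeleton: by Lemma~\ref{minimalrepconPw} a nontrivial repcon topology ${\cal T}$ for ${\cal P}(\mathbb{N})[C]$ contains the Scott or the Alexandroff topology; the representable complement function is a ${\cal T}$-homeomorphism; one then shows every singleton is open, so ${\cal T}$ is discrete, a contradiction. The difference lies in how the singletons are produced. The paper merges the two cases: it uses $C$ to turn a nontrivial open containing $\mathbb{N}$ into one containing $\emptyset$, then \emph{re-runs Case 2 of the proof} of Lemma~\ref{minimalrepconPw} to conclude that every principal down-set $\{ B \mid B\subseteq S\}$ is open; complementing gives the up-sets $\{ B \mid S\subseteq B\}$, and intersecting gives $\{ S\}$. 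Your Alexandroff case coincides with that end-game (there the down-sets are open by definition), but your Scott case is genuinely different: you isolate $\{\emptyset\}$ as the preimage of the open set $\{ B \mid 0\notin B\}$ under the Scott-continuous map sending $\emptyset\mapsto\emptyset$ and $A\mapsto\mathbb{N}$ for $A\neq\emptyset$, and then transport this isolated point everywhere via the symmetric-difference involutions $\sigma_S$, which are representable in ${\cal P}(\mathbb{N})[C]$ (by combinatory completeness from ${\sf c}$, union, intersection and the constant $S$) and hence act transitively on ${\cal P}(\mathbb{N})$ by ${\cal T}$-homeomorphisms. What your route buys: it uses only the \emph{statement} of Lemma~\ref{minimalrepconPw} rather than the internals of its proof (the paper's step ``by the proof of \ref{minimalrepconPw}'' is exactly what you avoid), and the principle ``one isolated point plus a transitive group of representable involutions forces discreteness'' is a clean, reusable idea. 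What it costs: a case split and two extra constructions where the paper needs only one application of the lemma's Case 2 construction plus complementation. The verifications you flag as delicate do go through: the isolating map is Scott-continuous (hence representable in ${\cal P}(\mathbb{N})$, hence in ${\cal P}(\mathbb{N})[C]$, hence ${\cal T}$-continuous), and its preimage of $\{ B \mid 0\notin B\}$ is exactly $\{\emptyset\}$.
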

\begin{proof} 
	Suppose $\cal T$ is a non-trivial repcon for ${\cal P}(\mathbb{N})[C]$. Since every representable map in ${\cal P}(\mathbb{N})$ is also representable in ${\cal P}(\mathbb{N})[C]$, by lemma~\ref{minimalrepconPw} $\cal T$ contains either the Scott topology or the Alexandroff topology. In both cases, there is a nontrivial open set $U$ with either $\emptyset\in U$ or $\mathbb{N}\in U$. But the complement function is representable too, so $\{ \mathbb{N}-A\, |\, A\in U\}$ is also open; so we always have a nontrivial open $U$ with $\emptyset\in U$. By the proof of \ref{minimalrepconPw}, for any set $S$, we have that $\{ B\, |\, B\subseteq S\}$ is open. Again applying the continuity of the complement function we find that $\{ B\, |\, (\mathbb{N}-B)\subseteq (\mathbb{N}-S)\}$ is also open. Hence their intersection is open, but this is $\{ S\}$; so ${\cal T}$ is discrete. We have a contradiction.
\end{proof}

\noindent We denote application in ${\cal P}(\mathbb{N})[C]$ by $A,B\mapsto A{\cdot}B$. There are elements $\sf r$ and $\sf c$ in ${\cal P}(\mathbb{N})$ such that $\sf r$ realizes the applicative morphism $\iota _C$ and $\sf c$ represents the function $C$:
$$\begin{array}{rcl} {\sf r}{\cdot}A{\cdot}B & = & A{\circ}B \\ {\sf c}{\cdot}A & = & \mathbb{N}-A\end{array}$$
\begin{lemma}\label{P[C]Booleans} Let $\sf T$ and $\sf F$ be the booleans in ${\cal P}(\mathbb{N})[C]$. There is an element $\sf n$ of ${\cal P}(\mathbb{N})$ satisfying ${\sf n}{\cdot}\emptyset ={\sf F}$ and ${\sf n}{\cdot}\{ 0\} ={\sf T}$. Hence, we can take $\emptyset$ and $\{ 0\}$ for the Booleans in ${\cal P}(\mathbb{N})[C]$.\end{lemma}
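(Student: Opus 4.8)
The plan is to exhibit $\sf n$ as a combinatory term in ${\cal P}(\mathbb{N})[C]$ built from the given realizers $\sf r$ and $\sf c$ together with a single auxiliary Scott-continuous function, and then to read off the two required values. The driving observation is that in ${\cal P}(\mathbb{N})$ the membership test ``$0\in b$'' is Scott-continuous (the output may jump up once $0$ appears), whereas the complementary test ``$0\notin b$'' is not; to produce $\sf T$ on $\{0\}$ and $\sf F$ on $\emptyset$ we need information of both kinds, and the complement function is precisely what converts the non-continuous test ``$0\notin b$'' into the continuous test ``$0\in\mathbb{N}-b$''. In other words, the non-monotone behaviour we want is relocated into a second argument that $\sf c$ supplies for us.

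Concretely, first I would define $g:{\cal P}(\mathbb{N})\times {\cal P}(\mathbb{N})\to {\cal P}(\mathbb{N})$ by $g(X,Y)=s_X\cup t_Y$, where $s_X={\sf T}$ if $0\in X$ and $s_X=\emptyset$ otherwise, and $t_Y={\sf F}$ if $0\in Y$ and $t_Y=\emptyset$ otherwise (here ${\sf T},{\sf F}$ are the fixed booleans of ${\cal P}(\mathbb{N})[C]$, viewed simply as subsets of $\mathbb{N}$). Monotonicity together with the finite-support criterion shows $g$ is Scott-continuous, so by the characterisation of the representable functions on the graph model there is $G\in {\cal P}(\mathbb{N})$ with $G{\circ}X{\circ}Y=g(X,Y)$. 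I then transport this to the extension using the realizer $\sf r$ of $\iota _C$ (which satisfies ${\sf r}{\cdot}A{\cdot}B=A{\circ}B$), setting ${\sf g}={<}XY{>}\,{\sf r}({\sf r}\,G\,X)\,Y$, so that ${\sf g}{\cdot}X{\cdot}Y=g(X,Y)$. Finally I put ${\sf n}={<}b{>}\,{\sf g}\,b\,({\sf c}\,b)$, which is an element of the common underlying set ${\cal P}(\mathbb{N})$ and satisfies ${\sf n}{\cdot}b=g(b,\mathbb{N}-b)$.

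It then remains to verify the two instances. For every $b$ exactly one of $0\in b$ and $0\in\mathbb{N}-b$ holds, so $g(b,\mathbb{N}-b)$ is always precisely one of $\sf T$, $\sf F$; in particular ${\sf n}{\cdot}\emptyset =g(\emptyset ,\mathbb{N})={\sf F}$ (since $0\notin\emptyset$ but $0\in\mathbb{N}$) and ${\sf n}{\cdot}\{0\}=g(\{0\},\mathbb{N}-\{0\})={\sf T}$ (since $0\in\{0\}$). This gives the displayed equations. For the final clause, given such an $\sf n$ I define a conditional on $\emptyset ,\{0\}$ by letting ``{\sf If}$'$ $p$ {\sf then} $a$ {\sf else} $b$'' abbreviate ``{\sf If} $({\sf n}{\cdot}p)$ {\sf then} $a$ {\sf else} $b$''; this returns $a$ when $p=\{0\}$ and $b$ when $p=\emptyset$, so the distinct elements $\{0\}$ and $\emptyset$ can serve as the booleans ``true'' and ``false''.

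The step needing the most care is the continuity of $g$, which is exactly the point at which the complement earns its keep: a direct distinguishing map $\emptyset\mapsto{\sf F}$, $\{0\}\mapsto{\sf T}$ would have to be monotone and hence cannot exist in ${\cal P}(\mathbb{N})$, so the use of $\sf c$ to feed in $\mathbb{N}-b$ is essential rather than cosmetic. Once that argument is in place, the transport step and the combinatory abstractions are routine, since $\iota _C$ is the identity relation realized by $\sf r$ and the underlying set is unchanged; I do not expect any difficulty there.
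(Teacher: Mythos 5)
Your proposal is correct and takes essentially the same route as the paper: the paper also defines ${\sf n}{\cdot}A$ as a Scott-continuous (hence ${\cal P}(\mathbb{N})$-representable) binary operation applied to the pair $(A,\mathbb{N}-A)$, namely ${\sf n}{\cdot}A=(M{\circ}A){\circ}(\mathbb{N}-A)$, where the explicitly constructed graph $M$ plays the role of your $G$ (its induced function differs from your $g$ only in additionally testing $1\in Y$ for the $\sf T$-branch, which is harmless). The only presentational difference is that the paper writes out the graph element and verifies the two values by direct computation with the application formula, whereas you invoke the continuity-implies-representability characterisation (in its routine binary form) abstractly.
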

\begin{proof} 
	Let $\ast$ be the binary operation given by $A{\ast}B=A{\circ}(\mathbb{N}-B)$. Note that in ${\cal P}(\mathbb{N})[C]$ this operation is represented by
	$${\sf s}={<}xy{>}{\sf r}{\cdot}x{\cdot}({\sf c}{\cdot}y)$$
	Let $M$ be the set $\{ [1,[2,x]]\, |\, x\in {\sf T}\}\cup \{ [0,[1,x]]\, |\, x\in {\sf F}\}$.
	
	Then $M{\circ}\emptyset =\{ [1,x]\, |\, x\in {\sf F}\}$ and 
	$$M{\circ}\{ 0\} =\{ [2,x]\, |\, x\in {\sf T}\}\cup\{ [1,x]\, |\, x\in {\sf F}\}$$
	Define ${\sf n}\, =\, {<}x{>}{\sf r}{\cdot}({\sf r}{\cdot}M{\cdot}x){\cdot}({\sf c}{\cdot}x)$
	Then
	$$\begin{array}{rclcl} {\sf n}{\cdot}A & = & {\sf r}{\cdot}({\sf r}{\cdot}M{\cdot}A){\cdot}({\sf c}{\cdot}A) & = & {\sf r}{\cdot}(M{\circ}A){\cdot}(\mathbb{N}-A) \\ & = & (M{\circ}A){\circ}(\mathbb{N}-A) & = & (M\circ A){\ast}A\end{array}$$
	hence
	$$\begin{array}{rclcl} {\sf n}{\cdot}\emptyset & = & (M{\circ}\emptyset ){\ast}\emptyset & = & \{ [1,x]\, |\, x\in {\sf F}\}{\ast}\emptyset \\ & = & \{ [1,x]\, |\, x\in {\sf F}\}{\circ}\mathbb{N} & = & {\sf F} \end{array}$$
	and
	$$\begin{array}{rcl} {\sf n}{\cdot}\{ 0\} & = & (M{\circ}\{ 0\} ){\circ} (\mathbb{N}-\{ 0\} ) \\
	& = & (\{ [2,x]\, |\, x\in {\sf T}\}\cup\{ [1,x]\, |\, x\in {\sf F}\} ){\circ}(\mathbb{N}-\{ 0\} ) \\
	& = & {\sf T}\end{array}$$
\end{proof}

\begin{theorem}\label{Pw[C]decidable} The pca ${\cal P}(\mathbb{N})[C]$ is decidable.\end{theorem}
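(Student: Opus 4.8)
The plan is to reduce the decision of the equality $A=B$ to the detection of emptiness of a suitable representable set, exploiting that complementation is now available as the representable operation $\sf c$. The central observation is that \emph{non-emptiness} is already decidable inside Scott's graph model. Setting $F=\{[2^n,0]\mid n\in\mathbb{N}\}$, one checks directly from the definition of $\circ$ that $F{\circ}S=\{0\}$ when $S\neq\emptyset$ and $F{\circ}S=\emptyset$ when $S=\emptyset$ (the pair $[2^n,0]$ contributes $0$ exactly when $e_{2^n}=\{n\}\subseteq S$). By Lemma~\ref{P[C]Booleans} we may take $\{0\}={\sf T}$ and $\emptyset={\sf F}$, so $S\mapsto F{\circ}S$ \emph{is} a boolean-valued non-emptiness test. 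Since $\sf r$ realizes $\iota_C$, so that ${\sf r}{\cdot}F{\cdot}S=F{\circ}S$, the element ${\sf ne}={<}x{>}{\sf r}{\cdot}F{\cdot}x$ represents this test in ${\cal P}(\mathbb{N})[C]$.

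Next I would note that binary intersection $A,B\mapsto A\cap B$ is Scott-continuous, hence represented in ${\cal P}(\mathbb{N})$ by a fixed $\sf i$ with ${\sf i}{\circ}A{\circ}B=A\cap B$; via the realizer $\sf r$ it is therefore representable in ${\cal P}(\mathbb{N})[C]$ as well. Combining this with the complement element $\sf c$, which satisfies ${\sf c}{\cdot}B=\mathbb{N}-B$, I can build a term computing $A\cap(\mathbb{N}-B)$. Its non-emptiness detects precisely the failure of the inclusion $A\subseteq B$, so applying $\sf ne$ and then the boolean negation $\sf not$ from section~\ref{prelimsection} yields an element $\sf sub$ with ${\sf sub}{\cdot}A{\cdot}B={\sf T}$ iff $A\subseteq B$.

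Finally, since $A=B$ iff $A\subseteq B$ and $B\subseteq A$, I would form the conjunction ${\sf and}({\sf sub}{\cdot}A{\cdot}B)({\sf sub}{\cdot}B{\cdot}A)$ and, using the pairing and unpairing available in every pca, abstract over a coded pair to produce a single element $\sf eq$ with ${\sf eq}{\cdot}[A,B]={\sf T}$ when $A=B$ and $\sf F$ otherwise. This is exactly the representability required for the equality relation to be decidable in ${\cal P}(\mathbb{N})[C]$.

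The computations at each stage are routine uses of $\lambda$-abstraction together with the defining equations for $\sf r$, $\sf c$, and the booleans; the only genuinely conceptual step is the passage from the one-sided, Scott-continuous non-emptiness test to a two-sided decision, and this is precisely where the extension by $C$ is essential. Non-emptiness is monotone and costs nothing, but the subset relation $A\subseteq B$ is antitone in $A$ and so is \emph{not} Scott-continuous; it becomes representable only because complementation lets us re-express $A\subseteq B$ as the emptiness of the representable set $A\cap(\mathbb{N}-B)$. I expect the only points needing care to be the verification that $F$ behaves as claimed and that the boolean values $\{0\}/\emptyset$ thread correctly through $\sf not$ and $\sf and$ under the identification of Lemma~\ref{P[C]Booleans}.
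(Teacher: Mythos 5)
Your proof is correct and follows essentially the same route as the paper's: fix the Booleans $(\emptyset,\{0\})$ via Lemma~\ref{P[C]Booleans}, build a Scott-representable non-emptiness-of-intersection test, and use the complement element $\sf c$ together with $\sf not$ and $\sf and$ to express $A=B$ as emptiness of the two differences $A\cap(\mathbb{N}-B)$ and $B\cap(\mathbb{N}-A)$. The only differences are cosmetic: the paper fuses your $\sf ne$ and $\sf i$ into the single element $P=\{[2^x,[2^x,0]]\mid x\in\mathbb{N}\}$ testing $A\cap B\neq\emptyset$ directly, and works with the curried form $X{\cdot}A{\cdot}B$ rather than a coded pair.
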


\begin{proof} 
	In view of Lemma~\ref{P[C]Booleans} we need to exhibit an element $X$ of ${\cal P}(\mathbb{N})$ such that, in ${\cal P}(\mathbb{N})[C]$,
	$$X{\cdot}A{\cdot}B\; =\;\left\{\begin{array}{ll} \emptyset & \text{if } A\neq B\\ \{ 0\} & \text{otherwise}\end{array}\right.$$
	Since the pair $(\emptyset ,\{ 0\} )$ is a good pair of Booleans in ${\cal P}(\mathbb{N})[C]$, we have elements {\sf and} and {\sf not} in ${\cal P}(\mathbb{N})$ representing the indicated boolean operations for these booleans. Since in ${\cal P}(\mathbb{N})$, the set $P=\{ [2^x,[2^x,0]]\, |\, x\in \mathbb{N}\}$ has the property that $P{\circ}A{\circ}B=\{ 0\, |\, A\cap B\neq\emptyset\}$, we also have such an element in ${\cal P}(\mathbb{N})[C]$: an $S$ such that
	$$S{\cdot}A{\cdot}B\, =\,\left\{\begin{array}{ll}\emptyset & \text{if } A\cap B=\emptyset \\ \{ 0\} & \text{otherwise}\end{array}\right.$$
	Now define in ${\cal P}(\mathbb{N})[C]$ the element
	$$M\; =\; {<}yx{>}({\sf not} (S{\cdot}({\sf c}{\cdot}y){\cdot}x)){\sf and}({\sf not}(S{\cdot}y{\cdot}({\sf c}{\cdot} x)))$$
	where the element $\sf c$, again, represents the complement function.
	
	Then $M{\cdot}A{\cdot}B=\{ 0\}$ iff ${\sf not}(S{\cdot}(\mathbb{N}-A){\cdot}B)=\{ 0\}$ and ${\sf not}(S{\cdot}A{\cdot}(\mathbb{N}-B))=\{ 0\}$. That is, iff $B\cap (\mathbb{N}-A)=\emptyset$ and $(\mathbb{N}-B)\cap A=\emptyset$; i.e. iff $A=B$. Moreover. $M{\cdot}A{\cdot}B=\emptyset$ otherwise.
\end{proof}

We also consider the {\em Cantor\/} topology on ${\cal P}(\mathbb{N})$, the subspace topology of ${\cal P}(\mathbb{N})$ when embedded in the real line as the Cantor set; a basis for this topology is given by the sets
$$U^q_p\; =\; \{ A\subseteq \mathbb{N}\, |\, p\subseteq A, A\cap q=\emptyset\}$$
for disjoint, finite $p,q$.

\begin{lemma}\label{CantorconrepPw[C]}
	The Cantor topology is a conrep topology for ${\cal P}(\mathbb{N})[C]$.
\end{lemma}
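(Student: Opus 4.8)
The plan is to use the complement function to convert the ``negative'' observations allowed by the Cantor topology into ordinary ``positive'' Scott observations, thereby reducing the Cantor-conrep property to the Scott-conrep property of $\mathcal P(\mathbb N)$ that has already been recorded. Recall that conrep for the Scott topology means that every partial Scott-continuous $g$ is represented by some $G\in\mathcal P(\mathbb N)$ with $G\circ A=g(A)$ throughout the domain of $g$; such a $g$ is then also representable in $\mathcal P(\mathbb N)[C]$, namely by ${<}x{>}\,{\sf r}{\cdot}G{\cdot}x$, since $({<}x{>}\,{\sf r}{\cdot}G{\cdot}x){\cdot}A={\sf r}{\cdot}G{\cdot}A=G\circ A$. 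So it will suffice, for a given partial Cantor-continuous $f$ with domain $D$, to factor $f$ through a representable change of coordinates followed by a Scott-continuous map.

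First I would introduce the \emph{doubling} map $\theta(A)=\{2n\mid n\in A\}\cup\{2n+1\mid n\notin A\}$, which records membership of $n$ in $A$ at coordinate $2n$ and non-membership at coordinate $2n+1$, and is injective. The interleaving operation $h(A,B)=\{2n\mid n\in A\}\cup\{2n+1\mid n\in B\}$ is Scott-continuous, hence representable in $\mathcal P(\mathbb N)$, so representable in $\mathcal P(\mathbb N)[C]$ by some $H$; since $\theta(A)=h(A,\mathbb N-A)=h(A,{\sf c}{\cdot}A)$, the element $d={<}x{>}\,H{\cdot}x{\cdot}({\sf c}{\cdot}x)$ satisfies $d{\cdot}A=\theta(A)$. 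This is the one and only point where the extension by the complement is used.

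Next I would establish that $\theta$ restricts to a homeomorphism from $(D,\text{Cantor})$ onto $(\theta(D),\text{Scott})$: the computation $\theta(U^q_p\cap D)=U_{\pi(p,q)}\cap\theta(D)$, with $\pi(p,q)=\{2n\mid n\in p\}\cup\{2n+1\mid n\in q\}$, matches basic Cantor-opens of $D$ with basic Scott-opens of $\theta(D)$ (a set $U_{p'}$ containing both $2n$ and $2n+1$ meets $\theta(D)$ in $\emptyset$). Since the Scott topology on the \emph{target} $\mathcal P(\mathbb N)$ is coarser than its Cantor topology, the Cantor-continuous $f$ is in particular continuous as a map into $(\mathcal P(\mathbb N),\text{Scott})$; composing with the homeomorphism $\theta^{-1}$ shows that $g:=f\circ\theta^{-1}\colon\theta(D)\to\mathcal P(\mathbb N)$ is Scott-continuous on its domain. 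By conrep of the Scott topology for $\mathcal P(\mathbb N)$, $g$ is representable, and hence, as noted above, representable in $\mathcal P(\mathbb N)[C]$ by some $G'$. Then ${<}x{>}\,G'{\cdot}(d{\cdot}x)$ represents $f$, since $f(A)=g(\theta(A))$ for all $A\in D$.

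The main obstacle is getting the topological bookkeeping exactly right, in particular noticing that only \emph{one} implication of a continuity equivalence is needed: Cantor-continuity of $f$ must be turned into Scott-continuity of $g$, and this works precisely because the Scott topology is coarser on the target (so no negative information about outputs has to be reproduced) while $\theta$ is a genuine homeomorphism on the source (so all the negative information about inputs is faithfully repackaged as positive Scott information). Once the correspondence $U^q_p\leftrightarrow U_{\pi(p,q)}$ and the coarseness observation are in place, the remainder is a routine composition of representable maps.
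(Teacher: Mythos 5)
Your proof is correct, and it takes a genuinely different route from the paper's. The paper constructs the representing element directly: for each $n$ it writes $V^n=\{A\in{\rm dom}(F)\mid n\in F(A)\}$ as the trace on ${\rm dom}(F)$ of a union of basic Cantor opens $U_{p}^{q}$, codes all the triples $(p,q,n)$ into a single graph $W$, and represents $F$ by $A\mapsto (W\circ A)\circ(\mathbb{N}-A)$; the first application consumes the positive requirements $p\subseteq A$, the second the negative ones $q\cap A=\emptyset$. So both arguments use the complement exactly once, on the input; the difference lies in how the negative information is consumed. You interleave it into a single argument via $\theta(A)=h(A,\mathbb{N}-A)$ and then discharge the rest through the black box ``the Scott topology is conrep for ${\cal P}(\mathbb{N})$'', whereas the paper in effect inlines the proof of that black box: its $W$ is essentially the interleaved re-coding of the graph one would build to represent your $g$. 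Your route buys modularity and a clean topological insight --- Cantor space sits inside the graph model as a Scott subspace via the doubling embedding, and $C$ is precisely what makes that embedding representable; essentially the same doubling trick $X\mapsto (2X)\cup(2\,C(X))$ reappears in section 4 of the paper, in the morphism $R[Eq]\to RE[C]$. The paper's route buys self-containedness: it never invokes conrep-ness of the Scott topology as a lemma. Your invocation is legitimate within the paper's framework (the fact is asserted in section 2, and the definition of conrep in section 1.1 covers partial functions), but note that you genuinely need the partial-function form of it, for the arbitrary subspace domain $\theta(D)$; the more familiar statement that the representable \emph{total} functions are exactly the Scott-continuous ones would not suffice as stated.
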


\begin{proof} 
	Let $F$ be partial continuous on ${\cal P}(\mathbb{N})$ for the Cantor topology. Let {\sf r} and {\sf s} be as in the proof of \ref{P[C]Booleans}; so ${\sf r}{\cdot}A{\cdot}B=A{\circ}B$ and ${\sf s}{\cdot}A{\cdot}B=A{\circ}(\mathbb{N}-B)$.
	
	For every $n\in \mathbb{N}$ the set $V^n=\{ A\in {\rm dom}(F)\, |\, n\in F(A)\}$ is open in ${\rm dom}(F)$ by assumption, hence equal to the intersection of ${\rm dom}(F)$ with a set of the form
	$$\bigcup_{i\in I_n}U_{p(i,n)}^{q(i,n)}$$
	Consider the set
	$$W\, =\,\{ [a,[b,k]]\, |\, k\in \mathbb{N}, \exists i,j\in I_k(p(i,k)=e_a,q(i,k)=e_b)\}$$
	and let $Z\; =\; {<}x{>}{\sf s}{\cdot}({\sf r}{\cdot}W{\cdot}x){\cdot}x$. So $Z{\cdot}A=(W{\circ}A){\circ}(\mathbb{N}-A)$.
	
	We claim that $Z$ represents $F$ in ${\cal P}(\mathbb{N})[C]$. Suppose $A\in {\rm dom}(F)$. Now
	$$\begin{array}{rcl}W{\circ}A & = & \{ m\, |\, \exists n(e_n\subseteq A,[n,m]\in W)\} \\
	& = & \{ [b,k]\, |\,\exists a\exists i,j\in I_k(e_a\subseteq A,p(i,k)=e_a,q(i,k)=e_b)\} \\
	& = & \{ [b,k]\, |\,\exists i,j\in I_k(p(i,k)\subseteq A,q(i,k)=e_b)\} \end{array}$$
	hence
	$$\begin{array}{rcl} (W{\circ}A){\circ}(\mathbb{N}-A) & = & \{ m\, |\, \exists n(A\cap e_n=\emptyset ,[n,m]\in W{\circ}A)\} \\
	& = & \{ k\, |\,\exists b\exists i,j\in I_k(A\cap e_b=\emptyset ,p(i,k)\subseteq A,q(i,k)=e_b)\} \\
	& = & \{ k\, |\,\exists i,j\in I_k(p(i,k)\subseteq A, A\cap q(i,k)=\emptyset )\} \\
	& = & \{ k\, |\, A\in V^k\} \\ & = & F(A)\end{array}$$
\end{proof}

\noindent We wish to compare the pca ${\cal P}(\mathbb{N})[C]$ to the pca which is often called {\em Kleene's second model\/} ${\cal K}_2$: the underlying set is the set $\mathbb{N}^\mathbb{N}$ of all functions $\mathbb{N}\to \mathbb{N}$, where for $\alpha ,\beta\in \mathbb{N}^\mathbb{N}$, the application $\alpha\beta$ is defined if and only if for all $n\in \mathbb{N}$ there is a $k$ such that $\alpha ([n,\beta (0),\ldots ,\beta (k-1)])>0$, and if this is the case then
$$(\alpha\beta )(n)\, =\,\alpha ([n,\beta (0),\ldots ,\beta (k-1)])-1$$
for the least such $k$.

For the envisaged comparison, it is useful to first study the topological aspects of the pcas we have seen so far.

The pca ${\cal K}_2$ carries a natural topology, the {\em Baire space\/} topology, which is the product topology on the countable product of copies of $\mathbb{N}$ with the discrete topology; basic open sets are of the form $U_{\sigma}$ for a finite sequence $\sigma =(\sigma _0,\ldots ,\sigma _{n-1})$ of natural numbers, where
$$U_{\sigma}\, =\,\{\alpha\in \mathbb{N}^\mathbb{N}\, |\,\alpha (0)=\sigma _0,\ldots ,\alpha (n-1)=\sigma _{n-1}\}$$
We state the following fact, which is well-known, without proof.
\begin{proposition}\label{BaireK2}The Baire space topology is both conrep and repcon for ${\cal K}_2$.\end{proposition}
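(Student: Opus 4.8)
The plan is to prove both halves by reducing to the unary case and then exploiting the one structural fact that makes ${\cal K}_2$ tick: a convergent application extracts each output digit from a \emph{finite} initial segment of its argument. First I would record the standard reduction. There is a representable pairing $(\mathbb{N}^\mathbb{N})^n\to\mathbb{N}^\mathbb{N}$ which is a homeomorphism for the product of Baire topologies and whose projections are again representable and continuous (the usual interleaving $\langle\alpha,\beta\rangle(2n)=\alpha(n)$, $\langle\alpha,\beta\rangle(2n+1)=\beta(n)$, iterated). Consequently a partial function $f$ of arity $n$ is representable (respectively continuous) exactly when its uncurried unary companion $\hat f$ is, so it suffices to treat partial functions $F:\mathrm{dom}(F)\to\mathbb{N}^\mathbb{N}$ with $\mathrm{dom}(F)\subseteq\mathbb{N}^\mathbb{N}$. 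Throughout I write $\bar\alpha k=(\alpha(0),\ldots,\alpha(k-1))$ for the length-$k$ initial segment of $\alpha$.

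For \textbf{repcon}, suppose $F$ is represented by $\phi\in\mathbb{N}^\mathbb{N}$, so $F(\alpha)=\phi\alpha$ on $\mathrm{dom}(F)$. Fix $\alpha\in\mathrm{dom}(F)$ and a finite target $\tau=(\tau_0,\ldots,\tau_{l-1})$. By the definition of application, for each $j<l$ there is a least $k_j$ with $\phi([j,\alpha(0),\ldots,\alpha(k_j-1)])>0$, and by convergence $F(\alpha)(j)$ is this value minus one. Put $K=\max_j k_j$. Any $\alpha'\in\mathrm{dom}(F)$ agreeing with $\alpha$ on the first $K$ values has the same least witnesses $k_j$ and the same positive values, hence $F(\alpha')(j)=F(\alpha)(j)$ for all $j<l$. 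Therefore $U_{\bar\alpha K}\cap\mathrm{dom}(F)\subseteq F^{-1}(U_\tau)$, and since the $U_\tau$ form a basis, $F$ is continuous on its domain.

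For \textbf{conrep}, the idea is to read $\phi$ off directly from the local behaviour of a continuous $F$. Define $\phi$ on a code $[m,s_0,\ldots,s_{l-1}]$ to be $v+1$ when the finite sequence $\sigma=(s_0,\ldots,s_{l-1})$ has at least one extension in $\mathrm{dom}(F)$ and every $\alpha\in\mathrm{dom}(F)$ extending $\sigma$ satisfies $F(\alpha)(m)=v$; on all other inputs set $\phi$ to $0$. The ``has an extension in $\mathrm{dom}(F)$'' clause is exactly what makes $\phi$ single-valued: two values $v,v'$ both forced by $\sigma$ must agree on a common extension $\alpha\in\mathrm{dom}(F)$, so $v=F(\alpha)(m)=v'$. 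Continuity of $F$ on its domain says precisely that for every $\alpha\in\mathrm{dom}(F)$ and every $m$ some initial segment of $\alpha$ forces the value $F(\alpha)(m)$, so $\phi\alpha$ converges at every $m$. Finally, at the least $k$ with $\phi([m,\bar\alpha k])>0$ the deciding segment is an initial segment of the genuine point $\alpha\in\mathrm{dom}(F)$, so the forced value can only be $F(\alpha)(m)$; hence $(\phi\alpha)(m)=F(\alpha)(m)$ and $\phi$ represents $F$.

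The only genuinely delicate points, and where I would concentrate the write-up, both sit in the conrep direction: showing $\phi$ is well defined (handled by the nonemptiness clause, which suppresses vacuous ``forcing'' by segments disjoint from $\mathrm{dom}(F)$), and verifying that the least-$k$ convention built into ${\cal K}_2$'s application reads off a correct value rather than a spurious early one. Both collapse to the single observation that any initial segment of a point of $\mathrm{dom}(F)$ is itself extendable within $\mathrm{dom}(F)$, so no forcing encountered along $\alpha$ can be vacuous or inconsistent with $F(\alpha)$. The pairing reduction and the repcon direction are then routine once the finite-use property of convergent applications has been isolated.
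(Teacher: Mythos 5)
The paper does not actually supply a proof of this proposition: it explicitly states the fact ``without proof'' as well-known, so there is nothing internal to compare your argument against. Your proof is correct and is the standard argument one would write down: the repcon half follows from the finite-use property of convergent applications (each output value of $\phi\alpha$ is determined by a finite initial segment of $\alpha$), and the conrep half constructs the representing element $\phi$ by a forcing-style definition, where your nonemptiness clause (``$\sigma$ has an extension in $\mathrm{dom}(F)$'') correctly guarantees both that $\phi$ is well defined and that the value read off at the least witnessing $k$ along a genuine $\alpha\in\mathrm{dom}(F)$ agrees with $F(\alpha)$. The only cosmetic caveats are that in the repcon paragraph the target $\tau$ should be taken to be an initial segment of $F(\alpha)$ (which is implicitly what you use), and that the arity reduction via interleaving should cite the standard fact that pairing and projections are representable in ${\cal K}_2$ rather than appeal to conrep, to avoid circularity; neither affects the substance.
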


\noindent As observed by Andrej Bauer in \cite{Bauer}, there are applicative morphisms between ${\cal P}(\mathbb{N})$ and ${\cal K}_2$ in both directions. We have an applicative morphism $\iota :{\cal K}_2\to {\cal P}(\mathbb{N})$ (which is actually a single-valued relation) which sends $\alpha\in \mathbb{N}^\mathbb{N}$ to the set of its coded finite initial segments:
$$\iota (\alpha )\, =\,\{\{ [\alpha (0),\ldots ,\alpha (n-1)]\, |\, n\geq 0\}\}$$
In the other direction we have an applicative morphism $\delta :{\cal P}(\mathbb{N})\to {\cal K}_2$, where for $A\subseteq \mathbb{N}$, we have $\alpha\in\delta (A)$ if and only if $$A\, =\, \{ n\, |\,\exists i\in \mathbb{N}(\alpha (i)=n+1)\}$$
It is readily calculated (or see \cite{Bauer}) that for the compositions $\delta\iota$ and $\iota\delta$ we have: $\delta\iota$ is isomorphic to the identity on ${\cal K}_2$, and $\iota\delta\leq {\rm id}_{{\cal P}(\mathbb{N})}$. So the pair $(\iota ,\delta )$ forms an adjunction $\iota\dashv\delta$ in the 2-category {\bf PCA}, and $\delta$ is up to isomorphism a retraction on $\iota$.  

We can extend this adjunction to one involving ${\cal P}(\mathbb{N})[C]$ and a suitable extension of ${\cal K}_2$. Consider the following function $S:{\cal K}_2\to {\cal K}_2$:
$$S(\alpha )(n)\; =\;\left\{\begin{array}{rl}n+1 & \text{ if}\; n+1\not\in {\rm im}(\alpha ) \\ 0 & \text{otherwise}\end{array}\right.$$
Obviously, the map $S$ is not Baire continuous, so $\iota_S: {\cal K}_2 \to {\cal K}_2[S]$ is not an isomorphism.

Another easy observation is that ${\cal K}_2[S]$ is decidable: given $\alpha ,\beta\in \mathbb{N}^\mathbb{N}$, let
$$F(\alpha ,\beta )(n)\, =\,\left\{\begin{array}{rl} 1 & \text{if }\alpha (n)\neq\beta (n) \\ 0 & \text{otherwise}\end{array}\right.$$
Then $F$ is representable in ${\cal K}_2$, hence also in ${\cal K}_2[S]$; now $\alpha =\beta$ holds iff $1\not\in {\rm im}(F(\alpha ,\beta ))$, iff $S(F(\alpha ,\beta ))(0)=1$.

Let us now look at the adjoint pair $\iota\dashv \delta :{\cal K}_2\to {\cal P}(\mathbb{N})$. We also have $\iota _C:{\cal P}(\mathbb{N})\to {\cal P}(\mathbb{N})[C]$, and $\iota _S:{\cal K}_2\to {\cal K}_2[S]$.
\begin{lemma}\label{adjunctionextension} ~~~~~~~~~~~~~~~~~~~~~~~\begin{itemize}
		\item[a)] The complement function $C$ is representable in ${\cal K}_2[S]$ with respect to $\iota _S{\circ}\delta$.
		\item[b)] The function $S$ is representable in ${\cal P}(\mathbb{N})[C]$ with respect to $\iota _C{\circ}\iota$.\end{itemize}\end{lemma}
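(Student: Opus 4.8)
The plan is to exploit that $\iota_C$ and $\iota_S$ are identity relations, so that the composites act essentially as $\delta$ and $\iota$ on the underlying sets: for $A\subseteq\mathbb{N}$ we have $(\iota_S\circ\delta)(A)=\delta(A)$, and for $\alpha\in\mathbb{N}^\mathbb{N}$ we have $(\iota_C\circ\iota)(\alpha)=\{\hat\alpha\}$, where $\hat\alpha=\{[\alpha(0),\ldots,\alpha(n-1)]\mid n\geq 0\}$. Part (a) will be nearly immediate, while part (b) carries the real work.

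For (a), I would simply take the element $s\in{\cal K}_2[S]$ that represents $S$ with respect to $\iota_S$ (which exists because $S$ is representable in ${\cal K}_2[S]$ w.r.t.\ $\iota_S$ by Theorem~\ref{extensiontheorem}), so that $s{\cdot}_S\beta=S(\beta)$ for every $\beta$. The only thing to check is the computation: if $\beta\in\delta(A)$, i.e.\ $A=\{n\mid\exists i\,\beta(i)=n+1\}$, then $S(\beta)\in\delta(\mathbb{N}-A)$. Indeed $S(\beta)(i)=n+1>0$ forces $i=n$ together with $n+1\notin\mathrm{im}(\beta)$, i.e.\ $n\notin A$; hence $\{n\mid\exists i\,S(\beta)(i)=n+1\}=\mathbb{N}-A$. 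Thus $s{\cdot}_S\beta=S(\beta)\in(\iota_S\circ\delta)(\mathbb{N}-A)=(\iota_S\circ\delta)(C(A))$, so $s$ represents $C$ w.r.t.\ $\iota_S\circ\delta$.

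For (b), I must produce $X\in{\cal P}(\mathbb{N})[C]$ with $X{\cdot}\hat\alpha=\widehat{S(\alpha)}$ for all $\alpha$. The key observation is that $S(\alpha)$ depends on $\alpha$ only through $\mathrm{im}(\alpha)$: $S(\alpha)(n)=n+1$ when $n+1\notin\mathrm{im}(\alpha)$ and $0$ otherwise. From $\hat\alpha$ the image $\mathrm{im}(\alpha)=\{v\mid\exists c\in\hat\alpha,\ v\text{ occurs in the sequence coded by }c\}$ is a Scott-continuous function of $\hat\alpha$, hence representable in ${\cal P}(\mathbb{N})$ by some $\phi$ with $\phi\circ\hat\alpha=\mathrm{im}(\alpha)$. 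Writing $I=\mathrm{im}(\alpha)$ and $\bar I={\sf c}{\cdot}I=\mathbb{N}-I$, the assembly
$$G(I,\bar I)=\{[s_0,\ldots,s_{k-1}]\mid k\geq 0,\ \forall j<k:\ (s_j=0\wedge j+1\in I)\vee(s_j=j+1\wedge j+1\in\bar I)\}$$
is Scott-continuous in the pair $(I,\bar I)$, hence representable by some $G\in{\cal P}(\mathbb{N})$; and since for the genuine pair $(\mathrm{im}(\alpha),\mathbb{N}-\mathrm{im}(\alpha))$ exactly one of $j+1\in I$, $j+1\in\bar I$ holds, $G(I,\bar I)$ consists of precisely the codes of initial segments of $S(\alpha)$, i.e.\ equals $\widehat{S(\alpha)}$. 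I would therefore set
$$X={<}x{>}\,{\sf r}{\cdot}({\sf r}{\cdot}G{\cdot}({\sf r}{\cdot}\phi{\cdot}x)){\cdot}({\sf c}{\cdot}({\sf r}{\cdot}\phi{\cdot}x))$$
and verify that $X{\cdot}\hat\alpha=(G\circ\mathrm{im}(\alpha))\circ(\mathbb{N}-\mathrm{im}(\alpha))=\widehat{S(\alpha)}\in(\iota_C\circ\iota)(S(\alpha))$.

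The main obstacle is isolated in (b): determining $S(\alpha)(n)$ requires deciding the predicate $n+1\notin\mathrm{im}(\alpha)$, which is co-semidecidable and hence \emph{not} Scott-continuous in $\hat\alpha$; this is exactly where the single application of the complement ${\sf c}$ is essential, as it converts this into the positively-verifiable condition $n+1\in\bar I$, after which the remaining bookkeeping in $G$ is genuinely Scott-continuous. The routine verifications I would still carry out are that $G$ is well-defined and Scott-continuous, that $G(\mathrm{im}(\alpha),\mathbb{N}-\mathrm{im}(\alpha))=\widehat{S(\alpha)}$, and that the displayed term reduces correctly in ${\cal P}(\mathbb{N})[C]$ using only the given elements ${\sf r}$ (with ${\sf r}{\cdot}A{\cdot}B=A\circ B$) and ${\sf c}$ (with ${\sf c}{\cdot}A=\mathbb{N}-A$).
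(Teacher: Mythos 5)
Your proof is correct and follows essentially the same route as the paper: for (a) you observe, just as the paper does, that the representer of $S$ in ${\cal K}_2[S]$ already sends any $\beta\in\delta(A)$ to an element of $\delta(\mathbb{N}-A)$; for (b) you continuously extract the image of $\alpha$ from $\hat\alpha$, apply the complement exactly once, and then assemble the coded initial segments of $S(\alpha)$ by a Scott-continuous operation. The only cosmetic difference is that the paper merges the image and its complement into a single tagged set $D$ and applies one continuous operator $E\circ D$, whereas you keep them as two arguments of a curried binary continuous map $G$.
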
 
\begin{proof} 
	a) We have $\alpha \in \iota _S{\circ}\delta (A)$ if and only if $\{ n\, |\, n+1\in {\rm im}(\alpha )\} =A$. So we need to find  a representable map on ${\cal K}_2[S]$ which sends each such $\alpha$ to some $\beta$ for which $\{ n\, |\, n+1\in {\rm im}(\beta )\} =\mathbb{N}-A$. But the map $S$ does precisely that, so we are done.
	
	\noindent b) We need to exhibit a representable operation in ${\cal P}(\mathbb{N})[C]$ which sends the set of initial segments of $\alpha$ to the set of initial segments of $S(\alpha )$. We use Scott-continuous operations and the complement function $C$. Continuously we get from $\{ [\alpha (0),\ldots ,\alpha (n-1)]\, |\, n\geq 0\}$ the set
	$$A \; =\; \{ n+1\, |\, n+1\in {\rm im}(\alpha )\} $$
	and from $A$, using $C$ and a continuous operation,
	$$B\; =\; C(A\cup\{ 0\})\; =\;\{ n+1\, |\, n+1\not\in {\rm im}(\alpha )\} $$
	From $A$ and $B$ we get
	$$D\; =\; \{ [0,n+1]\, |\, n+1\in A\}\cup\{ [1,n+1]\, |\, n+1\in B\}$$
	Now if $E$ is the set of all pairs $(\sigma ,a)$ satisfying:\begin{itemize}\item
		$\sigma$ is a coded sequence $[\sigma _0,\ldots ,\sigma _{n-1}]$ and for all $i<n$ there is a $j$ with $[j,i+1]\in e_a$, and whenever $[0,i+1]\in e_a$ then $\sigma _i=0$, and otherwise $\sigma _i=i+1$\end{itemize}
	then $E{\circ}D$ is the desired set of coded initial segments of $S(\alpha )$.
\end{proof}

\noindent It follows from lemma~\ref{adjunctionextension} that we have a commutative diagram
$$\xymatrix{ {{\cal K}_2}\ar[d]_{\iota _S}\ar[r]<-.6ex>_{\iota} & {{\cal P}(\mathbb{N})}\ar[l]<-.1ex>_{\delta}\ar[d]^{\iota _C} \\ 
	{{\cal K}_2[S]}\ar[r]<-.6ex>_{\iota _{\ast}} & {{\cal P}(\mathbb{N})[C]}\ar[l]<-.1ex>_{\delta _{\ast}} }$$ 
where ${\delta }_{\ast}$ is the unique factorisation of $\iota _S{\circ}\delta$ through ${\cal P}(\mathbb{N})[C]$ and $\iota _{\ast}$ the unique factorisation of $\iota _C{\circ}\iota$ through ${\cal K}_2[S]$.

It is a general feature of maps of the form $\iota _f:\mathcal{A}\to \mathcal{A}[f]$ that post-composition with $\iota _f$ reflects the preorder on pca morphisms: if $\gamma ,\delta :\mathcal{A}[f]\to \mathcal{B}$ satisfy $\gamma {\circ}\iota _f\leq\delta {\circ}\iota _f$, then $\gamma \leq \delta$. This is because $\iota_f$ is the identity relation. Therefore, in the diagram above we can conclude that $\delta _{\ast}{\circ}\iota _{\ast}\simeq {\rm id}_{{\cal K}_2[S]}$ and $\iota _{\ast}{\circ}\delta _{\ast}\leq {\rm id}_{{\cal P}(\mathbb{N})[C]}$.

\begin{lemma}\label{pullbacklemma} 
	The diagram
	$$\xymatrix{ {{\cal K}_2}\ar[r]^{\iota}\ar[d]_{\iota _S} & {{\cal P}(\mathbb{N})}\ar[d]^{\iota _C} \\
		{{\cal K}_2[S]}\ar[r]_{\iota _{\ast}} & {{\cal P}(\mathbb{N})[C]} } $$
	is a pullback diagram in {\bf PCA}.
\end{lemma}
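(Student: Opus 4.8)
The plan is to verify directly the universal property of a pullback (up to isomorphism) in the preorder-enriched category {\bf PCA}. Since between parallel morphisms there is at most one $2$-cell in each direction, the universal property carries no coherence conditions and reduces to two tasks: given a competing cone, produce a mediating morphism making the two triangles commute up to $\simeq$, and show it is unique up to $\simeq$. Concretely, I fix a pca ${\cal X}$ with morphisms $p:{\cal X}\to{\cal K}_2[S]$ and $q:{\cal X}\to{\cal P}(\mathbb{N})$ and an isomorphism $\iota_{\ast}\circ p\simeq\iota_C\circ q$, and I must exhibit $u:{\cal X}\to{\cal K}_2$, unique up to $\simeq$, with $\iota_S\circ u\simeq p$ and $\iota\circ u\simeq q$. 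Throughout I will use the relations recorded just after Lemma~\ref{adjunctionextension}: the adjunctions $\iota\dashv\delta$ and $\iota_{\ast}\dashv\delta_{\ast}$ with $\delta\iota\simeq\mathrm{id}$, $\iota\delta\leq\mathrm{id}$, $\delta_{\ast}\iota_{\ast}\simeq\mathrm{id}$, $\iota_{\ast}\delta_{\ast}\leq\mathrm{id}$, together with the commutativity $\iota_C\iota\simeq\iota_{\ast}\iota_S$ and the mate relation $\iota_S\delta\simeq\delta_{\ast}\iota_C$.

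Uniqueness is immediate and pins down the candidate: if $u'$ is any mediator then $\iota\circ u'\simeq q$, so $u'\simeq\delta\iota\,u'\simeq\delta q$ by $\delta\iota\simeq\mathrm{id}$. I therefore set $u:=\delta\circ q$ and check the two triangles. The first is a short $2$-cell chase: $\iota_S u=\iota_S\delta q\simeq\delta_{\ast}\iota_C q\simeq\delta_{\ast}\iota_{\ast}p\simeq p$, using the mate relation, the cone hypothesis, and $\delta_{\ast}\iota_{\ast}\simeq\mathrm{id}$ in turn. For the second triangle one inequality comes for free, namely $\iota u=\iota\delta q\leq q$, which is just the counit of $\iota\dashv\delta$.

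Everything thus reduces to the reverse inequality $q\leq\iota\delta q$, i.e.\ to showing that the hypothesis forces $q$ to factor through the inclusion $\iota$ up to isomorphism; this is the \textbf{main obstacle}. It cannot be extracted from the formal adjunction data alone. Indeed, one does get formally that $\iota_C\iota\delta\simeq\iota_{\ast}\delta_{\ast}\iota_C$, whence $\iota_C(\iota\delta q)\simeq\iota_{\ast}\delta_{\ast}\iota_C q\simeq\iota_{\ast}\delta_{\ast}\iota_{\ast}p\simeq\iota_{\ast}p\simeq\iota_C q$; but passing from $\iota_C(\iota\delta q)\simeq\iota_C q$ to $q\leq\iota\delta q$ would require post-composition with $\iota_C$ to reflect the preorder, and this fails in general because ${\cal P}(\mathbb{N})[C]$ is strictly more powerful than ${\cal P}(\mathbb{N})$ (its application may consult the complement oracle). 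So the specific computational content of the two extensions must be used.

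My plan for the crux is to argue from the concrete description of application in $\mathcal{A}[f]$ recalled in Section~\ref{extensionsection}. The hypothesis supplies an element of ${\cal P}(\mathbb{N})[C]$ which, from any $A\in q(x)$, computes through a finite dialogue with the complement oracle the set of coded initial segments of some $\beta\in p(x)$. Because $\iota_{\ast}$, like $\iota$, lands in initial-segment data, this output is an increasing, prefix-closed chain, that is, purely positive and monotone information. I would then exploit this to eliminate the negative (complement) queries: recovering \emph{some} element of $\iota\delta q(x)$ only requires enumerating such a chain, a task that can be carried out by a Scott-continuous and hence ${\cal P}(\mathbb{N})$-representable operation, yielding a genuine realizer in ${\cal P}(\mathbb{N})$ for $q\leq\iota\delta q$. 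The delicate point, and where the real work lies, is to make this elimination precise: to show that the freedom to land on a possibly different enumeration within the cone exactly compensates for the loss of the oracle, so that the monotone reconstruction is always possible.
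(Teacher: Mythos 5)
Your formal skeleton coincides with the paper's proof: the mediating morphism is $\delta\circ q$, the triangle $\iota_S\delta q\simeq p$ is verified by the same chase through $\iota_S\delta=\delta_*\iota_C$ and $\delta_*\iota_*\simeq{\rm id}$, and uniqueness is routine (you use $\delta\iota\simeq{\rm id}_{\mathcal{K}_2}$; the paper uses that $\iota_S$ is mono). But the step you yourself flag as the crux, $q\leq\iota\delta q$, is never proved: what you give is a plan, and the plan's justification does not hold up. The realizer witnessing $\iota_C q\leq\iota_* p$ is an element of $\mathcal{P}(\mathbb{N})[C]$, so the passage from $A\in q(x)$ to the chain $\iota(\beta)$ is an oracle computation; the fact that its \emph{output} is prefix-closed, positive, monotone information says nothing about whether the \emph{map} $A\mapsto\iota(\beta)$ admits a Scott-continuous realizer, and replacing a ${\cdot}_C$-computation by a Scott-continuous one is in general impossible (the complement function itself is the basic counterexample). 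Worse, Scott continuity forces monotonicity, and two distinct infinite prefix-chains are never $\subseteq$-comparable; hence any Scott realizer must send $\subseteq$-comparable elements of $\bigcup_x q(x)$ to the \emph{same} chain, a rigidity your ``freedom to land on a different enumeration'' sketch never confronts. So the central step is a genuine gap, not a deferred verification.

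The gap is largely an artifact of your reading of ``pullback'': you take the cone to commute up to $\simeq$, whereas the paper's proof takes it to commute strictly, $\iota_C\gamma=\iota_*\zeta$. Since $\iota_C$ and $\iota_S$ are identity relations, strict equality says that $\gamma$, \emph{as a relation}, equals $\iota\circ\zeta$: every $B\in\gamma(x)$ literally is the set of coded initial segments of some $\beta\in\zeta(x)$. Then the troublesome inequality follows formally: $\iota\delta\gamma=(\iota\delta\iota)\circ\zeta\simeq\iota\circ\zeta=\gamma$, by whiskering $\delta\iota\simeq{\rm id}_{\mathcal{K}_2}$ with $\iota$ (this yields realizers in $\mathcal{P}(\mathbb{N})$, via the realizer of $\iota$, and precomposition with the relation $\zeta$ leaves them untouched). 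Concretely, the realizer is the Scott-continuous operation that, given a set of codes of initial segments, enumerates that chain; no oracle elimination is needed because the strict hypothesis already puts chains, rather than arbitrary sets, into $\gamma(x)$. This is exactly what licenses the paper's otherwise puzzling assertion $(\iota\circ\delta)\circ\gamma\simeq\gamma$. If you insist on the bicategorical formulation, you owe a proof (or refutation) of $q\leq\iota\delta q$ from the weaker hypothesis $\iota_C q\simeq\iota_* p$; nothing in your proposal supplies it, and it is not clear that it is even true.
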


\begin{proof} 
	Given a pair $(\gamma :\mathcal{A} \to {\cal P}(\mathbb{N}), \zeta :\mathcal{A} \to {\cal K}_2[S])$ such that $\iota _C{\circ}\gamma =\iota _{\ast}{\circ}\zeta$, we have $\delta {\circ}\gamma :\mathcal{A} \to {\cal K}_2$. We have $\iota {\circ}(\delta {\circ}\gamma )=(\iota {\circ}\delta ){\circ}\gamma\simeq\gamma$ and $\iota _S{\circ}(\delta {\circ}\gamma )\simeq (\iota _S{\circ}\delta ){\circ}\gamma\simeq \delta _{\ast}{\circ}\iota _C{\circ}\gamma =\delta _{\ast}{\circ}\iota _{\ast}{\circ}\zeta =\zeta$; by inserting some realizers we can obtain actual equality. Uniqueness of the factorization follows since $\iota _S$ is mono.
\end{proof}

\noindent Lemma~\ref{pullbacklemma} means that the function $S$ is, relative to the adjoint retraction $(\iota\dashv\delta :{\cal K}_2\to {\cal P}(\mathbb{N}))$, the restriction to ${\cal K}_2$ of the complement function.

We have also seen that ${\cal K}_2[S]$ is decidable. So, if for some pca $\mathcal{A}$ we define $Eq:\mathcal{A} \to \mathcal{A}$ to be the function which decides equality:
$$Eq ([a,b])\; =\; \left\{\begin{array}{ll} {\sf T}_{\mathcal{A}} & \text{if } \alpha = \beta \\ {\sf F}_{\mathcal{A}} & \text{otherwise}\end{array}\right.$$
then $Eq$ in $\mathcal{K}_2$ is representable in ${\cal K}_2[S]$.

\subsection{A variation on binary maps}\label{variationsection}
We can define a pca on $2^{\omega}$ to which the Cantor topology on $\mathcal{P}(\mathbb{N})$ is both conrep and repcon. We use the usual bijection between $2^{\omega}$ and $\mathcal{P}(\mathbb{N})$, where $\alpha \in 2^{\omega}$ is related to $A \subseteq \mathbb{N}$ if for all $n \in \mathbb{N}: \alpha(n) = 1 \Leftrightarrow n \in A$. We will often move from one description to the other. The definition of the application for $2^{\omega}$ goes in a similar fashion as $\mathcal{K}_2$: for $\alpha, \beta \in 2^{\omega}$:
\begin{itemize}
\item[] $\alpha \cdot \beta \downarrow \Leftrightarrow \forall n \exists k(\alpha([n,\beta(0),...,\beta(k-1)])=1)$
\end{itemize}
If $\alpha \cdot \beta \downarrow$, then $\alpha \cdot \beta(n) = \alpha([n,\beta(0),...,\beta(k)])$ for $k$ such that

\begin{itemize}
	\item[] $\forall i < k: \alpha([n,\beta(0),...,\beta(i-1)]) = 0$
	\item[] $\alpha([n,\beta(0),...,\beta(k-1)]) = 1$
\end{itemize}

\noindent Just like before, we can define an equality map $Eq$ over $2^{\omega}$. However, simply adding equality is not always enough to represent the maps we want. An alternative can be \textit{countable equality}, a map $Eq_{\infty}$ that interprets a set as a countable sequence of sets and checks equality pairwise:
\[
Eq_{\infty}(A,B) := \{n | \forall m: [ n,m ] \in A \Leftrightarrow [ n,m ] \in B \}
\]

\begin{theorem}\label{isoPN2o}
	The identity map from $\mathcal{P}(\mathbb{N})[C]$ to $2^{\omega}[Eq_{\infty}]$ is an isomorphism.
\end{theorem}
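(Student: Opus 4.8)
The plan is to exhibit the stated map (under the canonical bijection $2^{\omega}\cong\mathcal{P}(\mathbb{N})$, $\alpha\leftrightarrow\{n\mid\alpha(n)=1\}$, which I use throughout to regard both pcas as living on one underlying set) as an isomorphism by showing that the identity relation is a decidable applicative morphism in \emph{both} directions. Since the composite of the two identity relations is again the identity relation, i.e.\ the identity morphism on each side, this at once yields mutually inverse isomorphisms. To show each direction is a morphism I would not unwind the oracle construction of the two applications directly, but instead invoke the universal property of Theorem~\ref{extensiontheorem}. Concretely, the identity relation $\mathcal{P}(\mathbb{N})[C]\to 2^{\omega}[Eq_{\infty}]$ arises by factoring the identity relation $\gamma\colon\mathcal{P}(\mathbb{N})\to 2^{\omega}[Eq_{\infty}]$ through $\iota_{C}$, which requires (a) that the Scott application $\circ$ be representable in $2^{\omega}[Eq_{\infty}]$ (so that $\gamma$ is an applicative morphism) and (b) that the complement $C$ be representable in $2^{\omega}[Eq_{\infty}]$. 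Symmetrically, the reverse identity relation arises by factoring the identity relation $2^{\omega}\to\mathcal{P}(\mathbb{N})[C]$ through $\iota_{Eq_{\infty}}$, requiring (c) that the $2^{\omega}$-application be representable in $\mathcal{P}(\mathbb{N})[C]$ and (d) that $Eq_{\infty}$ be representable in $\mathcal{P}(\mathbb{N})[C]$. In each case the resulting factor is forced to be the identity relation (composing with the identity relation $\iota$ changes nothing), and decidability of the morphisms is routine, since the two boolean values are concretely separated by a Cantor-continuous, hence representable, test. So the whole statement reduces to the four representability facts (a)--(d).

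Facts (b) and (d) are the easy ones. For (b), the complement on $\mathcal{P}(\mathbb{N})$ corresponds to bit-flipping on $2^{\omega}$, which is a homeomorphism for the Cantor topology; since that topology is conrep for $2^{\omega}$, $C$ is already representable in $2^{\omega}$, hence in $2^{\omega}[Eq_{\infty}]$. For (d), I would write $Eq_{\infty}$ explicitly in terms of operations available in $\mathcal{P}(\mathbb{N})[C]$: putting $\pi(D)=\{n\mid\exists m\,[n,m]\in D\}$ one has $Eq_{\infty}(A,B)=C\bigl(\pi\bigl((A\cap C(B))\cup(C(A)\cap B)\bigr)\bigr)$, because $n$ fails to lie in $Eq_{\infty}(A,B)$ exactly when the $n$-th slice of the symmetric difference is nonempty. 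Here $\cap,\cup$ and $\pi$ are Scott-continuous, hence representable in $\mathcal{P}(\mathbb{N})$, while $C$ is representable in $\mathcal{P}(\mathbb{N})[C]$ by construction, so the composite is representable there.

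Fact (c) I would obtain purely topologically. The $2^{\omega}$-application is by definition representable in $2^{\omega}$, and the Cantor topology is repcon for $2^{\omega}$, so this application is Cantor-continuous on its domain. By Lemma~\ref{CantorconrepPw[C]} the Cantor topology is conrep for $\mathcal{P}(\mathbb{N})[C]$, so every such partial continuous function --- in particular the $2^{\omega}$-application, curried through the standard pairing --- is representable in $\mathcal{P}(\mathbb{N})[C]$. This neatly bypasses having to simulate the unbounded Kleene-style search by hand.

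The remaining fact (a) is where the real work lies, and I expect it to be the main obstacle: the Scott application $\circ$ is \emph{not} Cantor-continuous (deciding $n\in A\circ B$ needs an unbounded search for a witness $m$), so it cannot come for free from the conrep/repcon structure of $2^{\omega}$, and the extra power of $Eq_{\infty}$ must be used to synthesise the missing existential quantifier. The plan is to note, first, that from inputs $A,B$ the set $G=\{[n,m]\mid [m,n]\in A,\ e_{m}\subseteq B\}$ is formed Cantor-continuously (each bit of $G$ depends on one bit of $A$ and finitely many bits of $B$), hence is representable in $2^{\omega}$; second, that the existential projection is recovered as $\pi(G)=C\bigl(Eq_{\infty}(G,\emptyset)\bigr)$, since $Eq_{\infty}(G,\emptyset)=\{n\mid\mathrm{slice}_{n}(G)=\emptyset\}$ and complementing turns emptiness into existence; and third, that $A\circ B=\pi(G)$. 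Composing these representable operations in $2^{\omega}[Eq_{\infty}]$ gives a realizer for $\circ$. Once (a)--(d) are in place, the universal property delivers the two identity morphisms and hence the isomorphism. The points most needing care are the coding verifications that $G$ is genuinely Cantor-continuous and that the pairing conventions line up in $\pi(G)=C\bigl(Eq_{\infty}(G,\emptyset)\bigr)$.
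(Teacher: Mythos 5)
Your proposal is correct and takes essentially the same route as the paper: both directions of the identity relation are established as decidable applicative morphisms via the universal property of Theorem~\ref{extensiontheorem}, using the same four representability facts proved the same way --- your recovery of the existential projection as $\pi(G)=C\bigl(Eq_{\infty}(G,\emptyset)\bigr)$ is exactly the paper's $Eq_{\infty}(\emptyset,A)=C(\pi(A))$ trick, your formula $Eq_{\infty}(A,B)=C\bigl(\pi\bigl((A\cap C(B))\cup(C(A)\cap B)\bigr)\bigr)$ is verbatim the paper's, and the reverse application is likewise handled by the repcon/conrep argument through Lemma~\ref{CantorconrepPw[C]}. No gaps.
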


\begin{lemma}\label{isoPN2oright}
	$id: \mathcal{P}(\mathbb{N}) \rightarrow 2^{\omega}[Eq_{\infty}]$ is a decidable applicative morphism.
\end{lemma}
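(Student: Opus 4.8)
The plan is to unwind what it means for the identity relation $\gamma(A)=\{A\}$ (under the bijection $2^{\omega}\cong\mathcal{P}(\mathbb{N})$) to be a decidable applicative morphism into $2^{\omega}[Eq_{\infty}]$, and then to supply the two required realizers. By definition I must produce (i) an element $r\in 2^{\omega}[Eq_{\infty}]$ representing the Scott application $\circ$ with respect to $\gamma$, i.e.\ with $r{\cdot}A{\cdot}B=A{\circ}B$ for all $A,B$ (where $\cdot$ is the application of $2^{\omega}[Eq_{\infty}]$), and (ii) an element $d$ sending the images of the two booleans of $\mathcal{P}(\mathbb{N})$ to the booleans of $2^{\omega}[Eq_{\infty}]$. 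Exactly as in the development of $\mathcal{P}(\mathbb{N})[C]$ (with its elements $\sf r$ and $\sf c$), I will use two basic ingredients available in $2^{\omega}[Eq_{\infty}]$: a realizer for $\iota_{Eq_{\infty}}$, so that every Cantor-continuous (that is, $2^{\omega}$-representable) operation can be simulated, together with an element representing $Eq_{\infty}$ itself.

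First I would record that complement is harmless here: on $2^{\omega}$ the map $\alpha\mapsto 1-\alpha$ flips each coordinate independently, hence is Cantor-continuous and already representable in $2^{\omega}$, a fortiori in $2^{\omega}[Eq_{\infty}]$. This is the crucial structural difference with the Scott model, and it is what makes the construction go through.

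The heart of the proof is representing $\circ$. Given $A,B$, the predicate $n\in A{\circ}B$ is the existential $\exists m([m,n]\in A\wedge e_m\subseteq B)$, which is only semidecidable and hence \emph{not} Cantor-continuous in $(A,B)$; the idea is to compute it anyway by using $Eq_{\infty}$ to perform the complementary $\Pi_1$ test. Concretely I would: (a) Cantor-continuously form the set $C$ with $[n,m]\in C\iff([m,n]\in A\wedge e_m\subseteq B)$ — each output bit depends only on bit $[m,n]$ of $A$ and the finitely many bits of $B$ indexed by $e_m$, so this is representable in $2^{\omega}$; (b) apply $Eq_{\infty}$ to $(C,\emptyset)$, so that, since the slice $C_n=\{m\mid[n,m]\in C\}$ equals $\{m\mid[m,n]\in A,\ e_m\subseteq B\}$, one gets $Eq_{\infty}(C,\emptyset)=\{n\mid C_n=\emptyset\}=\{n\mid n\notin A{\circ}B\}=\overline{A{\circ}B}$; (c) take the complement to obtain $A{\circ}B$. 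Assembling (a)--(c) into a single term $r$ by combinatory completeness yields the required realizer. I expect this to be the main obstacle: the conceptual step is to recognise that emptiness of each slice $C_n$ is exactly the negation of membership in $A{\circ}B$, so that the ``universal'' comparison $Eq_{\infty}({-},\emptyset)$ decides the complement of $A{\circ}B$, which the cheap Cantor complement then flips back.

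Finally, decidability is routine. The booleans $\mathsf{T}_{\mathcal{P}(\mathbb{N})}$ and $\mathsf{F}_{\mathcal{P}(\mathbb{N})}$ are distinct sets, so they differ at some fixed coordinate $n_0$; reading bit $n_0$ and returning the corresponding boolean of $2^{\omega}[Eq_{\infty}]$ is a Cantor-continuous operation, which gives the desired $d$. Combining (i) and (ii) shows that the identity is a decidable applicative morphism; and since, as noted, complement is representable in $2^{\omega}[Eq_{\infty}]$, this is precisely the input needed to factor through $\mathcal{P}(\mathbb{N})[C]$ by the universal property of the extension, as required for Theorem~\ref{isoPN2o}.
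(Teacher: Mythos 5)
Your proof is correct and takes essentially the same route as the paper's: a Cantor-continuous preprocessing of $(A,B)$ into a set whose $n$-th slice is nonempty iff $n \in A{\circ}B$, followed by the key observation that $Eq_{\infty}({-},\emptyset)$ computes the complement of the projection, which the Cantor-continuous complement on $2^{\omega}$ then flips back to $A{\circ}B$. The only cosmetic differences are that the paper factors your step (a) into two maps ($I$ and the intersection $II$) and leaves the decidability clause implicit, which you spell out.
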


\begin{proof}
	We write the elements of $2^{\omega}$ as subsets of $\mathbb{N}$, just like we do for $\mathcal{P}(\mathbb{N})$. So we need to show that application in $\mathcal{P}(\mathbb{N})$, defined as $\circ: (A,B) \mapsto \{m | \exists n: [ n,m ] \in A, e_n \subset B\}$, is representable in $2^{\omega}$. 
	
	We know that any Cantor continuous map is representable in $2^{\omega}$. Let $I: 2^{\omega} \rightarrow 2^{\omega}$ be the map given by $I(A) = \{[ n,m ] | \exists m: e_m \subset A\}$. This map is Cantor continuous, hence representable. We let $II: 2^{\omega} \times 2^{\omega} \rightarrow 2^{\omega}$ be defined as $II(A,B) = A \cap I(B)$, which is also representable since taking intersection is Cantor continuous. Now let $\pi: 2^{\omega} \rightarrow 2^{\omega}$ be the projection $\pi(A) = \{m | \exists n: [ n,m ] \in A\}$. 
	\[
	\forall A,B: \pi(II(A,B)) = \pi(\{[ n,m ] \in A | e_n \subset B \}) = A \circ B
	\]
	So $\pi \circ II$ is the application from $\mathcal{P}(\mathbb{N})$. Note that $Eq_{\infty}(\emptyset,A) = C(\pi(A))$, where $C$ is the complement representable in $2^{\omega}$. So $\circ$ is representable.
\end{proof}

\noindent Since $C$ is representable in $2^{\omega}$, we know that $id: \mathcal{P}(\mathbb{N})[C] \rightarrow 2^{\omega}[Eq_{\infty}]$ is a decidable applicative morphism (by theorem \ref{extensiontheorem}). Note that since the Cantor topology is repcon for $2^{\omega}$, and conrep for $\mathcal{P}(\mathbb{N})[C]$, we immediately know that $id: 2^{\omega} \rightarrow \mathcal{P}(\mathbb{N})[C]$ is a decidable applicative morphism.

\begin{lemma}\label{isoPN2oleft}
	The map $id: 2^{\omega}[Eq_{\infty}] \rightarrow \mathcal{P}(\mathbb{N})[C]$ is a decidable applicative morphism.
\end{lemma}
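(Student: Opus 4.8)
The plan is to reduce the statement, via the universal property of the extension construction (Theorem~\ref{extensiontheorem}), to the single representability fact that the countable-equality function $Eq_\infty$ is representable in $\mathcal{P}(\mathbb{N})[C]$. Recall that we already have a decidable applicative morphism $\gamma = id:2^\omega\to\mathcal{P}(\mathbb{N})[C]$, established just above from the fact that the Cantor topology is repcon for $2^\omega$ and conrep for $\mathcal{P}(\mathbb{N})[C]$. Applying Theorem~\ref{extensiontheorem} with $\mathcal{A}=2^\omega$, $f=Eq_\infty$ and $\mathcal{B}=\mathcal{P}(\mathbb{N})[C]$, it suffices to check that $Eq_\infty$ is representable in $\mathcal{P}(\mathbb{N})[C]$ with respect to $\gamma$. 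Since $\gamma$ is the identity relation (under the standard bijection $2^\omega\cong\mathcal{P}(\mathbb{N})$), this amounts to representing $Eq_\infty$ as an ordinary function on the common underlying set. The theorem then yields a unique decidable factorisation $\gamma_{Eq_\infty}:2^\omega[Eq_\infty]\to\mathcal{P}(\mathbb{N})[C]$ of $\gamma$ through $\iota_{Eq_\infty}$; as $\iota_{Eq_\infty}$ is the identity relation on $2^\omega$, this factorisation is again the identity relation, i.e.\ precisely the map $id:2^\omega[Eq_\infty]\to\mathcal{P}(\mathbb{N})[C]$ we want.

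So the heart of the argument is the representability of $Eq_\infty$. Writing elements as subsets of $\mathbb{N}$ and using the pairing bijection, I would form the symmetric difference
$$A\triangle B\;=\;(A\cap C(B))\cup(C(A)\cap B),$$
which is built from the Scott-continuous operations $\cap,\cup$ and the complement $C$ (represented by ${\sf c}$ in $\mathcal{P}(\mathbb{N})[C]$). For every $n$ one has $\exists m([n,m]\in A\triangle B)$ exactly when $[n,m]\in A \not\Leftrightarrow [n,m]\in B$ for some $m$, i.e.\ when $n\notin Eq_\infty(A,B)$. Hence, with $\pi$ the projection from the proof of Lemma~\ref{isoPN2oright} (for which $Eq_\infty(\emptyset,\,\cdot\,)=C(\pi(\,\cdot\,))$),
$$Eq_\infty(A,B)\;=\;C\bigl(\pi(A\triangle B)\bigr).$$
Since $\pi$ is Scott-continuous, hence representable in $\mathcal{P}(\mathbb{N})$ and a fortiori in $\mathcal{P}(\mathbb{N})[C]$, and $C$ is represented by ${\sf c}$, composing these representing elements gives an element of $\mathcal{P}(\mathbb{N})[C]$ representing $Eq_\infty$ (after unpairing its argument, itself a representable Scott-continuous operation).

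The only real obstacle is the displayed identity: one must check that the single application of the complement in $\pi(A\triangle B)\mapsto C(\pi(A\triangle B))$ genuinely captures the universally quantified condition defining $Eq_\infty$, i.e.\ that all the non-Scott-continuous content of countable equality is concentrated in one use of $C$. This is exactly the columnwise computation $Eq_\infty(A,B)=C(\pi(A\triangle B))$ verified above; everything else is a routine composition of representable maps together with the appeal to the universal property.
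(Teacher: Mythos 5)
Your proof is correct and takes essentially the same route as the paper's: the paper likewise reduces the lemma to representing $Eq_\infty$ in $\mathcal{P}(\mathbb{N})[C]$ via exactly the identity $Eq_\infty(A,B)=C\bigl(\pi\bigl((A\cap C(B))\cup(C(A)\cap B)\bigr)\bigr)$, with the appeal to Theorem~\ref{extensiontheorem} and to the previously established decidable morphism $id:2^\omega\to\mathcal{P}(\mathbb{N})[C]$ left implicit in the surrounding text. Your version merely makes that universal-property bookkeeping explicit (and states the membership equivalence with the correct negation, where the paper's prose has a small slip), which is fine.
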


\begin{proof}
	We show that $Eq_{\infty}$, as defined in $2^{\omega}$, can be represented in $\mathcal{P}(\mathbb{N})[C]$. Note that $(A \cap C(B)) \cup (C(A) \cap B)$ is the set consisting of those elements on which $A$ and $B$ differ. Taking the projection map $\pi(A) := \{n | \exists m: [ n,m ] \in A\}$, representable in $\mathcal{P}(\mathbb{N})$, we see that $\pi((A \cap C(B)) \cup (C(A) \cap B))$ consists of those $n$ such that there is an $m$ for which $A$ and $B$ differ on $[ n,m ]$. So $n$ is included if and only if $n \in Eq_{\infty}(A,B)$. Hence $Eq_{\infty} = C(\pi((A \cap C(B)) \cup (C(A) \cap B)))$.
\end{proof}

\noindent Hence we have established the identity map as an applicative morphism between $\mathcal{P}(\mathbb{N})[C]$ and $2^{\omega}[Eq_{\infty}]$ in both directions, so they must be isomorphisms and the two pcas represent the same maps. 

We define a map $\gamma: \mathcal{P}(\mathbb{N}) \to \mathcal{P}(2^{\omega})$ as follows. For $A \subset \mathbb{N}$ and $\alpha \in 2^{\omega}$ we have;
$$\alpha \in \gamma(A) \Leftrightarrow \forall n \in \mathbb{N}:(n \in A \Leftrightarrow \exists m \in \mathbb{N}: \alpha([n,m]) = 1)$$

\begin{proposition}
	The map $\gamma$ is a decidable applicative morphism $\mathcal{P}(\mathbb{N}) \to 2^{\omega}$.
\end{proposition}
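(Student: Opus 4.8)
The plan is to unwind the definition and recognise that $\gamma(A)=\pi^{-1}(A)$, where $\pi(\alpha)=\{n\mid\exists m:[n,m]\in\alpha\}$ is the projection used in the proof of Lemma~\ref{isoPN2oright}; thus $\gamma(A)=\{\alpha\in 2^{\omega}\mid\pi(\alpha)=A\}$. Totality of the relation is immediate, since for any $A$ the characteristic function of $\{[n,0]\mid n\in A\}$ lies in $\gamma(A)$. The two things to check are that $\gamma$ is an applicative morphism and that it is decidable, and throughout I would exploit the fact already used above that every partial Cantor-continuous map on $2^{\omega}$ is representable in $2^{\omega}$ (the Cantor topology is conrep for $2^{\omega}$), so that it suffices to produce maps that are continuous and compute the right projections.

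For the applicative-morphism part I would seek a realizer $r$ with $\pi(r\cdot\alpha\cdot\beta)=\pi(\alpha)\circ\pi(\beta)$ whenever $\alpha\in\gamma(A)$ and $\beta\in\gamma(B)$. The key observation is that, for fixed inputs, the condition $m\in\pi(\alpha)\circ\pi(\beta)$ is an open (positive, semidecidable) condition in the Cantor topology: it unwinds to $\exists n\bigl(\exists k:\alpha([[n,m],k])=1 \ \wedge\ \forall i\in e_n\,\exists j:\beta([i,j])=1\bigr)$, a countable union of finite conjunctions of basic clopen conditions. I would therefore define $R(\alpha,\beta)$ to put the bit at position $[m,s]$ equal to $1$ exactly when the finite data coded by $s$ witnesses $m\in\pi(\alpha)\circ\pi(\beta)$. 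This $R$ is total and each output bit depends on finitely many input bits, so it is Cantor-continuous, hence representable (after currying) by some $r$; and by construction $\pi(R(\alpha,\beta))=\pi(\alpha)\circ\pi(\beta)=A\circ B$, giving $r\cdot\alpha\cdot\beta\in\gamma(A\circ B)$ as required.

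For decidability I would first note that whether an applicative morphism is decidable does not depend on the chosen good pair of booleans in the source: any two such pairs are interchangeable by maps representable in $\mathcal{P}(\mathbb{N})$, and the realizer of $\gamma$ transfers the decider along these maps. I may therefore take $\mathsf{T}_{\mathcal{P}(\mathbb{N})}=\{0\}$ and $\mathsf{F}_{\mathcal{P}(\mathbb{N})}=\{1\}$, an $\subseteq$-incomparable pair for which definition by cases, namely $(x,a,b)\mapsto\{z\mid(0\in x\wedge z\in a)\vee(1\in x\wedge z\in b)\}$, is Scott-continuous and hence representable. Given $\alpha$ with $\pi(\alpha)\in\{\{0\},\{1\}\}$, I would search in parallel for a witness $[0,m]\in\alpha$ and for a witness $[1,m']\in\alpha$; exactly one search succeeds, and I output $\mathsf{T}_{2^{\omega}}$ in the first case and $\mathsf{F}_{2^{\omega}}$ in the second. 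This partial map is Cantor-continuous on $\gamma(\{0\})\cup\gamma(\{1\})$, hence representable by the required decider $d$. I expect this last step to be the main obstacle, and the reason the incomparable booleans are needed: in contrast to the applicative-morphism realizer, the decider must return a total boolean, whereas a single semidecidable projection test only confirms one of the two truth values; it is precisely the parallel search over an incomparable pair that settles both cases and makes $\gamma$ decidable.
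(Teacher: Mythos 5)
Your proof is correct and follows essentially the same route as the paper's: both construct a Cantor-continuous realizer whose output bits record finite positive witnesses for membership in $A\circ B$ (the paper via prefix approximations $A^{<m}$ and Scott-continuity of $\circ$, you via explicit witness coding at positions $[m,s]$, both then invoking representability of continuous maps in $2^{\omega}$), and both settle decidability by a parallel first-hit search over positions $[i,m]$ with $i<2$. Your explicit argument that decidability does not depend on the chosen good pair of booleans, together with the concrete choice $(\{0\},\{1\})$, merely makes precise a point that the paper's sketchier decidability step leaves implicit.
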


\begin{proof}
	Within $\mathcal{P}(\mathbb{N})$ we define $f: \mathcal{P}(\mathbb{N}) \rightarrow \mathcal{P}(\mathbb{N})$ as $f([A,B]) = A \circ B$. Taking the pairing $[A,B]$ to be defined as $[A,B] := (2A) \cup (2B+1)$, we have that $$f(A) = \{m | \exists n: 2[n,m] \in A \wedge \forall k \in e_n: (2k+1) \in A\}$$
	We prove that $f$ is representable with respect to $\gamma$. For $A \subset N$ and $m \in \mathbb{N}$, take $A^{<m} := \{n \in A | n < m\}$. Let $r \in 2^{\omega}$ be such that for $\alpha \in \gamma$, $r \alpha([n.m]) = 1$ if $n \in f(A^{<m})$. Note that such an $r$ exists, since it only looks at $\alpha$ up to its $m$-th elements and then makes a decision. Now see that $\bigcup_m f(A^{<m}) = A$ and hence for all $n$:
	$$\exists m: r \alpha([n,m]) = 1 \Leftrightarrow \exists m: n \in f(A^{<m}) \Leftrightarrow n \in f(A)$$
	So $r \alpha \in \gamma(f(A))$. Decidability follows from the fact that we can have a $d \in 2^{\omega}$ such that $d \alpha(0) = i$ if there is an $m$ such that $\alpha([i,m]) = 1$ and for all $j \in 2, k \in \mathbb{N}$ such that $[j,k] < [i,m]$ we have $\alpha([i,m]) = 0$. This will function as a representation of decidability of $\gamma$.
\end{proof}

We look at the relation between $\mathcal{K}_2$ and $2^{\omega}$.
Let $\varepsilon: \mathcal{K}_2 \rightarrow 2^{\omega}$ be the map given by
$$\varepsilon(\alpha)([ n,m ]) := 
\begin{cases} 	1   & \mbox{if } \alpha(n) = m \\ 
0   	& \mbox{else }
\end{cases}$$

Note that applications of both $\mathcal{K}_2$ and $2^{\omega}$ look at begin sections of their input to make a decision on their output. Now, any begin section of $\alpha \in \mathcal{K}_2$ has a maximum of its elements, hence all the information of that begin section is stored in a begin section of $\epsilon(\alpha)$. More specifically:

\begin{proposition}\label{K2to2o}
	The map $\varepsilon$ is a decidable applicative morphism $\mathcal{K}_2 \to 2^{\omega}$.
\end{proposition}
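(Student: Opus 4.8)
The plan is to verify the two clauses defining a decidable applicative morphism directly, using that $\varepsilon$ is single-valued (so $\varepsilon(\alpha)$ is the nonempty singleton whose element is the graph $\{[n,\alpha(n)]\,|\,n\in\mathbb{N}\}$, making $\varepsilon$ a total relation for free) together with the fact, recorded in the present subsection, that the Cantor topology is conrep for $2^{\omega}$. All the computational content lies in producing a realizer for application, and for that I would not exhibit a subset of $\mathbb{N}$ by hand but instead describe a Cantor-continuous partial map and invoke conrep.

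For the realizer, write $g_\alpha=\varepsilon(\alpha)$; I want $r\in 2^{\omega}$ with $r\cdot g_\alpha\cdot g_\beta=g_{\alpha\beta}$ whenever $\alpha\beta{\downarrow}$. I would consider the partial map $G$ on $2^{\omega}\times 2^{\omega}$ computed as follows on input $(u,v)$: to decide the output bit at coordinate $[n,m]$, run the $\mathcal{K}_2$-style search for $(\alpha\beta)(n)$, i.e.\ look for the least $k$ with $\alpha([n,\beta(0),\dots,\beta(k-1)])>0$, where at each step the value $\beta(i)$ is recovered as the unique $j$ with $v([i,j])=1$ and each value $\alpha(c)$ as the unique $j$ with $u([c,j])=1$; when the search halts with value $p$, output $1$ if $p-1=m$ and $0$ otherwise. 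The observation in the remark preceding the proposition is exactly what makes this continuous: recovering $\beta(0),\dots,\beta(k-1)$, all bounded by some $M$, requires reading $v$ only on the finite initial segment of codes up to $[k-1,M]$, and similarly for $\alpha$ from $u$. Hence each output bit of $G(u,v)$ depends on only finitely many values of $u$ and $v$, so $G$ is continuous on its domain.

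Since conrep supplies a representative for the unary map $[u,v]\mapsto G(u,v)$ obtained through a Cantor-continuous pairing, the standard abstraction manipulations of a pca yield an element $r$ with $r\cdot u\cdot v=G(u,v)$ on $\mathrm{dom}(G)$. It then remains to check that when $\alpha\beta{\downarrow}$ in $\mathcal{K}_2$ the pair $(g_\alpha,g_\beta)$ lies in $\mathrm{dom}(G)$ and $G(g_\alpha,g_\beta)=g_{\alpha\beta}$: for each $n$ the search terminates precisely because $\alpha\beta{\downarrow}$ guarantees a positive value at some initial segment, and the procedure outputs $1$ at $[n,m]$ exactly when $(\alpha\beta)(n)=m$, which is the defining property of $g_{\alpha\beta}$. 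For decidability, I would pick a coordinate $c_0=[n_0,m_0]$ at which the graphs $\varepsilon(\mathsf{T}_{\mathcal{K}_2})$ and $\varepsilon(\mathsf{F}_{\mathcal{K}_2})$ disagree (such $c_0$ exists since the distinct total functions $\mathsf{T}_{\mathcal{K}_2},\mathsf{F}_{\mathcal{K}_2}$ differ at some $n_0$, so the two graphs differ at $[n_0,\mathsf{T}_{\mathcal{K}_2}(n_0)]$); the map reading the single coordinate $c_0$ and returning $\mathsf{T}_{2^{\omega}}$ or $\mathsf{F}_{2^{\omega}}$ accordingly is Cantor-continuous, hence representable by some $d$, and by construction $d\cdot\varepsilon(\mathsf{T}_{\mathcal{K}_2})=\mathsf{T}_{2^{\omega}}$ and $d\cdot\varepsilon(\mathsf{F}_{\mathcal{K}_2})=\mathsf{F}_{2^{\omega}}$.

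The main obstacle I expect is the continuity and domain bookkeeping in the realizer step: one must ensure that the simulation's unbounded search over $k$, together with the finite–initial–segment recovery of each $\beta(i)$ and $\alpha(c)$ from the graphs, never consumes more than finitely much of $u$ and $v$ per output bit, and that the resulting partial continuous map converges on $(g_\alpha,g_\beta)$ exactly when $\alpha\beta{\downarrow}$, yielding the \emph{total} graph $g_{\alpha\beta}$ rather than a partial one. Everything else, namely single-valuedness, the currying step, and the decidability map, is routine.
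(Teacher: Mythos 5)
Your proposal is correct, and it rests on the same core idea as the paper's proof — the remark preceding the proposition, that all the information in an initial segment of $\alpha$ is stored in an initial segment of $\varepsilon(\alpha)$, so that each output bit of the simulated application needs only finitely much of the input graphs — but it discharges the proof obligation by a genuinely different mechanism. The paper interleaves $\alpha$ and $\beta$ into a single function $\gamma$, rewrites the condition $(\alpha\cdot\beta)(n)=m$ as a condition on $f\in\varepsilon(\gamma)$, and then writes down the realizer $r\in 2^{\omega}$ explicitly, prescribing its values on coded finite sequences, ending with ``Decidability is easy to check.'' You instead package the simulation as a partial map $G$ on pairs of graphs, argue it is Cantor-continuous on its domain, and appeal to the conrep property of the Cantor topology for $2^{\omega}$ (asserted at the start of this subsection and already used in Lemma~\ref{isoPN2oright}), plus a pairing/currying step, to obtain the realizer; you also spell out the decidability clause concretely by reading a single coordinate where the graphs of the booleans differ. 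Your route buys modularity and a correctness argument that is easy to verify — the paper's explicit coding of $r$ is error-prone and in fact hard to parse as printed — while the paper's route buys self-containedness, exhibiting a concrete realizer instead of leaning on the conrep assertion, which the paper states without proof. One step worth flagging in your version: for the currying you need the Cantor-continuous pairing $(u,v)\mapsto[u,v]$ to be representable as a \emph{binary} map in the pca $2^{\omega}$, which is slightly more than the unary conrep statement; the paper itself uses such binary representability freely (e.g.\ for the map $II$ in Lemma~\ref{isoPN2oright}), so you are consistent with its level of rigor, but a careful write-up should note this as a separate (standard) fact.
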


\begin{proof}
	For $\alpha, \beta \in \mathcal{K}_2$, let $\gamma \in \mathcal{K}_2$ be their pairing such that $\gamma(2n) := \alpha(n)$ and $\gamma(2n+1) := \beta(n)$. Take $f \in \varepsilon(\gamma)$. We look at application in $\mathcal{K}_2$:
	\begin{itemize}
		\item[] $\alpha \cdot \beta(n) = m \Leftrightarrow$
		\item[] $\exists k,u_0,u_1,...,u_{k-1}: (\forall l<k: \beta(l)=u_l \wedge \alpha([n,u_0,u_1,...,u_{l-1}]) = 0)$
		\item[] \quad \quad \quad $\wedge \alpha([n,u_0,u_1,...,u_{k-1}]) = m+1 \Leftrightarrow$
		\item[] $\exists k,u_0,...,u_{k-1}: (\forall l<k: f([2l+1,u_l]) = 1 \wedge f([2[n,u_0,u_1,...,u_{l-1}],0]) = 1$
		\item[] \quad \quad \quad $\wedge f([2[n,u_0,u_1,...,u_{k-1}],m+1]) = 1$
	\end{itemize}
	
	So to represent a map from $f$ to $g \in \varepsilon(\alpha \cdot \beta)$ we design $r \in 2^{\omega}$ such that: $\forall n,m,s,k,u_0,u_1,...,u_{k-1}$ we take $\sigma \in 2^{\omega}$ the unique code of the sequence of length $[2[n,u_0,...u_{k-1}],s+1]+2$ such that: $\sigma(0) = [n,m]$, $\forall l<k: \sigma([2l,u_l]) = 1$, $\sigma([2[n,u_0,...,u_{l-1}],0]) = 1$ and $\sigma([2[n,u_0,...,u_{l-1}],s+1]) = 1$, then $r(\sigma) = 1$ and for all $i: r(\sigma \ast [i]) = 1 \Leftrightarrow s=m$. Take all other values of $r$ to be $0$. Then within $2^{\omega}$, $rf \in \varepsilon$. So application of $\mathcal{K}_2$ is representable with respect to $\varepsilon$. Decidability is easy to check.
\end{proof}

\begin{lemma}\label{gammafactor}
	The following diagram commutes:
\[
\xymatrix{
	\mathcal{P}(\mathbb{N}) \ar[r]_{\gamma} \ar[d]_{\delta} & 2^{\omega} \\
	\mathcal{K}_2 \ar[ur]_{\varepsilon} & 
}
\]
\end{lemma}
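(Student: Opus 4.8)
The plan is to show that the two morphisms $\gamma$ and $\varepsilon\circ\delta$ from $\mathcal{P}(\mathbb{N})$ to $2^\omega$ are isomorphic, i.e.\ that $\gamma\leq\varepsilon\delta$ and $\varepsilon\delta\leq\gamma$; this is what commutativity in the preorder-enriched category {\bf PCA} amounts to, since (as one checks already for $A=\emptyset$) the two relations are not literally equal. First I would compute the composite explicitly. Since $\varepsilon$ is single-valued, $(\varepsilon\delta)(A)=\{\varepsilon(\beta)\mid \beta\in\delta(A)\}$, and unfolding the definitions shows that $g\in(\varepsilon\delta)(A)$ iff $g$ is the graph of a total function $\beta$ (so for each $i$ there is exactly one $v$ with $g([i,v])=1$, namely $v=\beta(i)$) for which $\{n\mid\exists i:\, g([i,n+1])=1\}=A$. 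This should be contrasted with $\gamma$, where $\alpha\in\gamma(A)$ iff $\{n\mid\exists m:\,\alpha([n,m])=1\}=A$: in $\gamma$ the membership of $n$ in $A$ is witnessed in the \emph{first} coordinate of the pair $[n,m]$, whereas in $\varepsilon\delta$ it is witnessed by the \emph{value} $n+1$ occurring as a second coordinate of the graph. The two inequalities must therefore be realised by explicit conversion maps performing this coordinate relabeling.

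For $\gamma\leq\varepsilon\delta$ I would exhibit the map $\Phi:2^\omega\to 2^\omega$ sending $\alpha$ to the graph of the total function $\beta$ defined, for $k=[n,m]$, by $\beta(k)=n+1$ if $\alpha([n,m])=1$ and $\beta(k)=0$ otherwise. A direct check gives $\{n\mid\exists k:\,\beta(k)=n+1\}=\{n\mid\exists m:\,\alpha([n,m])=1\}$, so if $\alpha\in\gamma(A)$ then $\beta\in\delta(A)$ and $\Phi(\alpha)=\varepsilon(\beta)\in(\varepsilon\delta)(A)$. For the reverse inequality $\varepsilon\delta\leq\gamma$ I would use $\Psi:2^\omega\to 2^\omega$ given by $\Psi(g)([n,i]):=g([i,n+1])$; then $\exists i:\,\Psi(g)([n,i])=1$ iff $\exists i:\,g([i,n+1])=1$, so if $g\in(\varepsilon\delta)(A)$ then $\Psi(g)\in\gamma(A)$. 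Both $\Phi$ and $\Psi$ are Cantor continuous, since each output bit depends on a single input bit, and because the Cantor topology is conrep for $2^\omega$ each is representable; their realizers furnish the elements $s,s'\in 2^\omega$ witnessing the two inequalities, and combining them yields $\gamma\simeq\varepsilon\delta$.

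The two set-theoretic identities verifying that $\Phi$ and $\Psi$ land in the correct fibres are routine. The point that needs the only real care is the bookkeeping of the coordinate shift: one must check that $\Phi(\alpha)$ really is the graph of a \emph{total} function, so that every index $k$ receives exactly one value (in particular those $k$ with $\alpha(k)=0$, which are sent to $0$ and contribute no $n+1$), ensuring $\Phi(\alpha)$ lies in the image of $\varepsilon$ and hence in $(\varepsilon\delta)(A)$; and dually that $\Psi$ correctly relabels the value coordinate $n+1$ of the graph back into the first coordinate used by $\gamma$. Once the continuity of these two relabelings is noted, representability is immediate from the conrep property of the Cantor topology for $2^\omega$, and no further manipulation of realizers is required.
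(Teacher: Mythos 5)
Your proof is correct and follows essentially the same route as the paper: both inequalities $\gamma\leq\varepsilon\circ\delta$ and $\varepsilon\circ\delta\leq\gamma$ are established there by exactly the two single-bit-dependent relabeling maps you describe (your $\Phi$ and $\Psi$ correspond to the paper's two transformations named $t$, one turning $\alpha\in\gamma(A)$ into the graph of the function $k\mapsto p_0(k)+1$ or $0$, the other shifting the value coordinate of the graph back into the first coordinate), with representability justified by the conrep property of the Cantor topology for $2^{\omega}$. If anything, your coordinate bookkeeping is slightly more careful than the paper's, whose displayed formulas contain minor pairing-shift typos.
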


\begin{proof}
	For $A \in \mathcal{P}(\mathbb{N})$:
	\[
	\varepsilon(\delta(A)) = \bigcup \{\varepsilon(\alpha) | \text{im}(\alpha) - \{0\} = (A+1)\} =\]
	\[ 
	\{\beta | \forall n, \exists ! m, (\beta([ n,m ]) = 1) \cap \forall m (m \in A \Leftrightarrow \exists n, \beta([ n+1,m ]) = 1)\}
	\]
	Using transformation $t(\beta)([ n,m ]) = \beta([ m+1,n ])$, we have $t(\varepsilon(\delta(A))) = \{\beta | \forall m > 0, (\exists ! n: \beta([ n,m ]) = 1) \wedge \forall n(n \in A \Leftrightarrow \exists m, \beta([ n,m ]) = 1)\}$ which is a non-empty subset of $\gamma(A)$. So $\varepsilon \circ \delta \prec \gamma$, realized by $t$.
	
	Take $p: \mathbb{N} \rightarrow \mathbb{N} \times \mathbb{N}$ the bijective inverse of $[ , ]$. Let $t: 2^{\omega} \rightarrow 2^{\omega}$ be the representable map such that $t(\alpha)([ n,m ]) = 1 \Leftrightarrow (\alpha(p(n)) = 1 \wedge p_0(n)+1 = m) \vee (\alpha(p(n)) = 0)$. Take $\alpha \in \gamma(A)$. For a natural number $n$, either $\alpha(p(n)) = 1$ or $\alpha(p(n)) = 0$ and there is exactly one $m$ equal to $p_0(n)+1$. So $\forall n, \exists ! m, t(\alpha)([ n,m ]) = 1$. Secondly,  $\alpha \in \gamma(A)$ means that for any $n$, $n \in A$ is true if and only if there is an $m$ such that $\alpha([ n,m ]) = 1$. $\alpha([ n,m ]) = 1$ means $t(\alpha)([ [ n,m ], m+1 ]) = 1$, so $\exists k, t(\alpha)([ k,m+1 ]) = 1$. If on the contrary, for all $m$: $\alpha([ n,m ]) = 0$ we get that for all $k, t(\alpha)(k,m+1) = 0$ (since if $t(\alpha)(k,m+1) = 1$, $p_0(k) = m$ and $\alpha([ m,p_0(k) ]) = \alpha(p(k)) = 1$ which is against the assumption. So $\forall m (m \in A \Leftrightarrow \exists n, \alpha([ n,m+1 ]) = 1)$. We can conclude that $t(\alpha) \in \varepsilon \circ \delta$. So $\gamma \prec \varepsilon \circ \delta$
\end{proof}

\noindent A decidable applicative morphism in the other direction can be given by a map $\zeta: 2^{\omega} \rightarrow \mathcal{K}_2$ simply using the inclusion $2 \subset \mathbb{N}$.

\begin{proposition}\label{K2equiv}
	The applicative morphisms $\varepsilon$ and $\zeta$ give an equivalence between $2^\omega$ and $\mathcal{K}_2$.
\end{proposition}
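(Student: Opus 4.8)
The plan is to unwind the definition of equivalence in \textbf{PCA}: since $\varepsilon$ and $\zeta$ are applicative morphisms in opposite directions, it suffices to prove that the two composites are isomorphic to the respective identity morphisms, i.e. $\zeta\varepsilon \simeq \mathrm{id}_{\mathcal{K}_2}$ and $\varepsilon\zeta \simeq \mathrm{id}_{2^{\omega}}$, where $\simeq$ abbreviates $\leq$ in both directions. Both $\varepsilon$ and $\zeta$ are single-valued relations ($\zeta$ is literally the inclusion $2^{\omega}\hookrightarrow\mathbb{N}^{\mathbb{N}}$, and $\varepsilon(\alpha)$ is a single function), so the two composites are again single-valued, and I can write them out explicitly before exhibiting realizers.

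First I would compute $\zeta\varepsilon$. For $\alpha\in\mathcal{K}_2$, $\zeta\varepsilon(\alpha)$ is the singleton of the function $\beta\in 2^{\omega}\subset\mathbb{N}^{\mathbb{N}}$ with $\beta([n,m])=1\Leftrightarrow\alpha(n)=m$. To obtain $\zeta\varepsilon\leq\mathrm{id}_{\mathcal{K}_2}$ I exhibit $s\in\mathcal{K}_2$ recovering $\alpha$ from $\beta$: on input $\beta$ and index $n$, search for the unique $m$ with $\beta([n,m])=1$ and output $m$; this is an unbounded search, which is available in $\mathcal{K}_2$. For the reverse inequality $\mathrm{id}_{\mathcal{K}_2}\leq\zeta\varepsilon$ I exhibit $s'\in\mathcal{K}_2$ sending $\alpha$ to $\varepsilon(\alpha)$: on input $\alpha$ and $[n,m]$, output $1$ if $\alpha(n)=m$ and $0$ otherwise. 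Both maps are Baire-continuous, hence representable since the Baire topology is conrep for $\mathcal{K}_2$ (Proposition~\ref{BaireK2}).

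Next I would compute $\varepsilon\zeta$. For $\alpha\in 2^{\omega}$, $\varepsilon\zeta(\alpha)$ is the singleton of $\beta\in 2^{\omega}$ with $\beta([n,m])=1\Leftrightarrow\alpha(n)=m$; since $\alpha$ is $\{0,1\}$-valued this means $\beta([n,0])=1-\alpha(n)$, $\beta([n,1])=\alpha(n)$, and $\beta([n,m])=0$ for $m\geq 2$. The isomorphism $\varepsilon\zeta\simeq\mathrm{id}_{2^{\omega}}$ is then witnessed by two re-encodings: from $\beta$ recover $\alpha$ by $\alpha(n)=\beta([n,1])$, and from $\alpha$ produce $\beta$ by the displayed formula. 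Each output coordinate is finitely determined, so both maps are Cantor-continuous and therefore representable in $2^{\omega}$, as the Cantor topology is conrep for $2^{\omega}$.

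I do not expect a serious obstacle here: once the composites are identified as single-valued relations, every required realizer is a simple, evidently continuous re-encoding. The only points demanding care are bookkeeping ones, namely confirming that each realizer lives in the correct pca (the $\mathcal{K}_2$ direction legitimately uses unbounded search to invert $\varepsilon$, whereas the $2^{\omega}$ direction needs only bounded lookups) and that the composites have been computed correctly as single-valued relations. It is worth remarking that this equivalence already holds between the plain pcas $\mathcal{K}_2$ and $2^{\omega}$, making no use of the extensions $S$, $C$, or $Eq_{\infty}$.
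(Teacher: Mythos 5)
Your proof is correct and follows essentially the same route as the paper: identify the composites $\zeta\varepsilon$ and $\varepsilon\zeta$ as single-valued maps, observe that each map and its partial inverse are continuous (Baire, resp.\ Cantor), and invoke the conrep property of the corresponding topology to obtain the realizers witnessing $\zeta\varepsilon \simeq \mathrm{id}_{\mathcal{K}_2}$ and $\varepsilon\zeta \simeq \mathrm{id}_{2^{\omega}}$. The only difference is that you spell out the realizers (unbounded search in $\mathcal{K}_2$, bounded lookups in $2^{\omega}$) where the paper simply asserts continuity.
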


\begin{proof}
	For $\alpha \in \mathcal{K}_2$ we have $\zeta(\varepsilon(\alpha))([n,m]) = 0 \Leftrightarrow \alpha(n) \neq m$,  $\zeta(\varepsilon(\alpha))([n,m]) = 1 \Leftrightarrow \alpha(n) = m$ and  $\zeta(\varepsilon((\alpha))([n,m]) < 2$. So it is easy to see that the maps $\alpha \mapsto \zeta(\varepsilon(\alpha))$ and its inverse (which is a partial map) are continuous in the Baire topology. This topology is conrep in $\mathcal{K}_2$, hence $\zeta \circ \varepsilon \sim {\sf id}_{\mathcal{K}_2}$. 
	
	Since the map $\varepsilon \circ \zeta$ has the same properties with respect to the Cantor topology, we can use the same argument to conclude that $\varepsilon \circ \zeta \sim {\sf id}_{2^{\omega}}$.
\end{proof}

\begin{proposition}\label{K2noniso}
	There cannot be an isomorphism between $2^\omega$ and $\mathcal{K}_2$.
\end{proposition}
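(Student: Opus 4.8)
The plan is to exploit the topological invariants attached to these pcas, together with the compactness of Cantor space and the non-compactness of Baire space.

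First I would unpack what an isomorphism in \textbf{PCA} is, namely an applicative morphism with a strict two-sided inverse 1-cell. Writing out the relational composites and using that the identity 1-cell is the identity relation $a\mapsto\{a\}$, one checks that an invertible applicative morphism $\gamma:\mathcal K_2\to 2^{\omega}$ must be single-valued: if $\beta\in\gamma(\alpha)$ then its inverse sends $\beta$ to $\{\alpha\}$, which in turn forces $\gamma(\alpha)=\{\beta\}$. Hence an isomorphism is precisely a bijection $f:\mathbb N^{\mathbb N}\to 2^{\omega}$ such that both $f$ and $f^{-1}$ are (single-valued) applicative morphisms. Let $r\in 2^{\omega}$ and $s\in\mathcal K_2$ be their realisers, so that $r\,f(\alpha)\,f(\beta)=f(\alpha\beta)$ whenever $\alpha\beta{\downarrow}$, and dually for $s$.

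Next I would show that conjugation by such an $f$ preserves representability of partial endofunctions. If $g:\mathcal K_2\to\mathcal K_2$ is represented by $e$, then for $\beta$ in the relevant domain one has $f\bigl(g(f^{-1}\beta)\bigr)=f\bigl(e\cdot f^{-1}\beta\bigr)=r\,f(e)\,\beta$, so $fgf^{-1}$ is represented in $2^{\omega}$ by the term ${<}x{>}\,r\,f(e)\,x$; the symmetric computation with $s$ gives the converse, so $g\mapsto fgf^{-1}$ is a bijection between the representable partial endofunctions of $\mathcal K_2$ and those of $2^{\omega}$. Now push the Baire topology forward along $f$ to a topology $f_{*}\mathcal B$ on $2^{\omega}$ (whose opens are the images of Baire-opens), so that $f$ is tautologically a homeomorphism $(\mathcal K_2,\mathcal B)\to(2^{\omega},f_{*}\mathcal B)$. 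I claim $f_{*}\mathcal B$ is both conrep and repcon for $2^{\omega}$: a partial map $h$ on $2^{\omega}$ is $f_{*}\mathcal B$-continuous iff $f^{-1}hf$ is Baire-continuous, and (by the previous paragraph) representable iff $f^{-1}hf$ is; combining these equivalences with Proposition~\ref{BaireK2}, which says representable and Baire-continuous coincide on $\mathcal K_2$, shows that representable and $f_{*}\mathcal B$-continuous coincide on $2^{\omega}$.

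Finally I would derive a contradiction from compactness. By construction $f_{*}\mathcal B$ is homeomorphic to Baire space, which is not compact (the clopen cover by the sets $\{\alpha\mid\alpha(0)=n\}$ has no finite subcover). On the other hand the Cantor topology is conrep and repcon for $2^{\omega}$, and it is compact. The contradiction therefore rests on showing that a conrep-and-repcon topology on $2^{\omega}$ is forced to be the Cantor topology, and this is the step I expect to be the main obstacle. I would attack it by the method of Lemma~\ref{minimalrepconPw}: repcon bounds the topology from below, since each basic Cantor-clopen $U^{q}_{p}$ is realised as $\{B\mid A{\cdot}B\in U\}$ for a suitable representable map $A{\cdot}(-)$ and nontrivial open $U$, forcing all the $U^{q}_{p}$ to be open; while conrep bounds it from above, since any strictly finer topology would make some non-Cantor-continuous map representable. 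Pinning $f_{*}\mathcal B$ to the compact Cantor topology contradicts its non-compactness, so no such $f$ exists.
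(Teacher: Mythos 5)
Your reduction of an isomorphism to a bijection $f:\mathcal{K}_2\to 2^\omega$ with both $f$ and $f^{-1}$ single-valued applicative morphisms is correct, as is the conjugation argument showing that $g\mapsto fgf^{-1}$ preserves representability in both directions; combined with Proposition~\ref{BaireK2} this does make the pushforward topology $f_*\mathcal{B}$ both conrep and repcon for $2^\omega$, and your overall strategy (compactness separates the Cantor topology from the Baire topology) is the same as the paper's. But the step you defer --- pinning $f_*\mathcal{B}$ down to the Cantor topology --- is where all the content lies, and your sketch of it does not work as stated. Note first that the only inclusion your contradiction actually uses is that $f_*\mathcal{B}$ is \emph{contained in} the Cantor topology: a topology coarser than a compact one is compact, contradicting that $(2^\omega,f_*\mathcal{B})$ is homeomorphic to Baire space. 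The reverse inclusion --- which is the half that the method of Lemma~\ref{minimalrepconPw} delivers, and the only half your sketch handles correctly --- yields no contradiction at all: a non-compact topology can perfectly well contain a compact one (the discrete topology contains the Cantor topology).

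For the inclusion you need, your justification is that ``any strictly finer topology would make some non-Cantor-continuous map representable''. That is not a valid principle: the collection of continuous partial endomaps is not monotone in the topology (under both the discrete and the indiscrete topologies \emph{every} map is continuous), so enlarging a topology does not automatically create new continuous maps --- you must construct one. Your route is repairable, because such a construction exists: if $U\in f_*\mathcal{B}$ is not Cantor-open, pick $\alpha_0\in U$ with no Cantor-basic neighbourhood inside $U$, pick $\beta_n\notin U$ agreeing with $\alpha_0$ on the first $n$ values, and consider the partial map $h$ with $h(\alpha_0)=\mathsf{T}$, $h(\beta_n)=\mathsf{F}$. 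Since $U$ and $2^\omega\setminus\{\alpha_0\}$ are both in $f_*\mathcal{B}$ (the latter because Baire space is $T_1$), $h$ is $f_*\mathcal{B}$-continuous on its domain, hence representable by conrep-ness of $f_*\mathcal{B}$; but $h$ is not Cantor-continuous at $\alpha_0$, contradicting repcon-ness of the Cantor topology for $2^\omega$. The paper instead proves exactly the needed inclusion by a shortcut worth comparing with: $f^{-1}(V)$, for $V=\mathcal{K}_2\setminus\{\mathsf{T}_{\mathcal{K}_2}\}$, is the complement of a single point and hence Cantor-open, and every Baire basic open $U_\sigma$ equals $g_\sigma^{-1}(V)$ for a representable $g_\sigma$, so $f^{-1}(U_\sigma)=h_\sigma^{-1}(f^{-1}(V))$ for the representable (hence Cantor-continuous) conjugate $h_\sigma$ of $g_\sigma$, and is therefore open.
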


\begin{proof}
	Assume such an isomorphism exists. It must be given by applicative morphisms both ways such that their compositions are equal to the identity. So it must be given by a bijective map $f: 2^\omega \to \mathcal{K}_2$ such that $f$ and $f^{-1}$ are applicative morphisms. We will prove that this $f$ is continuous.
	
	Consider the following set, $V = \mathcal{K}_2-\{{\sf T}_{\mathcal{K}_2}\}$ and note that it can be written as $\bigcup_{\neg \sigma \sqsubseteq {\sf T}_{\mathcal{K}_2}} U_{\sigma}$ and hence it is open in the Baire topology. Take $t$ the single element such that $f(t) = {\sf T}_{\mathcal{K}_2}$, then $W := \gamma^{-1}(V) = \gamma^{-1}(\mathcal{K}_2-\{{\sf T}_{\mathcal{K}_2}\}) = 2^{\omega} - \gamma^{-1}({\sf T}_{\mathcal{K}_2}) = 2^{\omega} - \{t\} = \bigcup_n U_{n \mapsto (1-t(n))}$ is open in the Cantor topology.
	
	Let $U_{\sigma}$ be a standard open in the Baire topology. Take $g_{\sigma}: \mathcal{K}_2 \rightarrow \mathcal{K}_2$ to be the map:
	$$g_{\sigma}(\alpha)(n) := \begin{cases} 
	{\sf F}_{\mathcal{K}_2} & \text{if } \alpha \in U_{\sigma}\\
	{\sf T}_{\mathcal{K}_2} & \text{otherwise}
	\end{cases}$$
	This function is continuous hence representable in $\mathcal{K}_2$ and we have $g^{-1}(V) = U_{\sigma}$. Because $f$ gives an isomorphism, $f^{-1}$ must form an applicative morphism, so there must be a representable (hence continuous) map $h: 2^{\omega} \rightarrow 2^{\omega}$ such that $f \circ h = g_{\sigma} \circ f$. So $f^{-1}(U_{\sigma}) = f^{-1}(g^{-1}(V)) = h^{-1}(f^{-1}(V)) = h^{-1}(W)$ which is open since $W$ is open and $h$ is continuous. So the inverse image through $f$ of any basic open is open, hence $f$ is continuous. 
	
	Now the Cantor topology is compact, so by $f$ we must conclude that the Baire topology is compact, which is not the case. We have a contradiction. So we cannot have an isomorphism.
\end{proof}

We see that $2^{\omega}$ and $\mathcal{K}_2$ give an example of two pcas that are equivalent but not isomorphic.

\section{Independence results}\label{indepenSection}
We have seen that we get decidability as a side-effect of adding the complement function to $\mathcal{P}(\mathbb{N})$. So in terms of the functions they represent, $\mathcal{P}(\mathbb{N})[C]$ is at least as powerful as $\mathcal{P}(\mathbb{N})[Eq]$. We can ask ourselves about the extent of this difference. The following result can be used to investigate the limits of what $\mathcal{P}(\mathbb{N})[Eq]$ and other similarly defined pcas can represent.

\begin{proposition}\label{countableRep}
	Given a pca $\mathcal{A}$ and a partial map $F: \mathcal{A} \rightarrow \mathcal{A}$ whose image is countable. Then for any partial map $f: \mathcal{A} \rightarrow \mathcal{A}$ representable in $\mathcal{A}[F]$, there is a countable partition $\{V_i\}_{i \in \mathbb{N}}$ of ${\rm dom}(f)$ such that for all $i:$ $f|_{V_i}$ is representable in $\mathcal{A}$.
\end{proposition}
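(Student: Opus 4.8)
The plan is to exploit the explicit description of the application $\cdot_F$ of $\mathcal{A}[F]$ recalled in section~\ref{extensionsection}, and to use the hypothesis on $F$ only through the countability of its image. Fix an element $a_f\in\mathcal{A}$ representing $f$ in $\mathcal{A}[F]$, so that $a_f\cdot_F x=f(x)$ for every $x\in{\rm dom}(f)$. Unwinding the definition, for each such $x$ there is a (unique) finite sequence of ``query points'' $e^x_0,\ldots,e^x_{k_x-1}$ with
$$a_f[x,F(e^x_0),\ldots,F(e^x_{i-1})]=[{\sf F},e^x_i]\ (i<k_x),\qquad a_f[x,F(e^x_0),\ldots,F(e^x_{k_x-1})]=[{\sf T},f(x)].$$
The decisive observation is that the ``oracle transcript'' $(F(e^x_0),\ldots,F(e^x_{k_x-1}))$ is a finite sequence of elements of ${\rm im}(F)$, and it is this transcript --- not the query points themselves --- that the computation really depends on.

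I would index the partition by transcripts. For a finite sequence $\vec d=(d^{(0)},\ldots,d^{(k-1)})$ of elements of ${\rm im}(F)$, set
$$V_{\vec d}\;=\;\{\,x\in{\rm dom}(f)\mid \text{the computation }a_f\cdot_F x\text{ has oracle transcript }\vec d\,\}.$$
Since ${\rm im}(F)$ is countable, there are only countably many such sequences $\vec d$, so $\{V_{\vec d}\}_{\vec d}$ is a countable family; discarding the empty members and enumerating the rest yields the partition $\{V_i\}_{i\in\mathbb{N}}$ sought. That the $V_{\vec d}$ are pairwise disjoint and cover ${\rm dom}(f)$ follows from determinism of the procedure: because application in $\mathcal{A}$ is a partial function, the values $a_f[x]$, $a_f[x,F(e^x_0)]$, $\ldots$ are successively forced, so the transcript is uniquely determined by $x$; and every $x\in{\rm dom}(f)$ halts with some transcript, since $a_f\cdot_F x$ is defined.

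It remains to represent $f$ on each piece inside $\mathcal{A}$ itself. Fix $\vec d=(d^{(0)},\ldots,d^{(k-1)})$. For $x\in V_{\vec d}$ the \emph{halting} equation above reads $a_f[x,d^{(0)},\ldots,d^{(k-1)}]=[{\sf T},f(x)]$, with the $d^{(j)}$ now fixed constants of $\mathcal{A}$. Hence, writing ${\sf snd}$ for the (representable) second projection of the pairing $[\cdot,\cdot]$, the element
$$g_{\vec d}\;=\;{<}x{>}\,{\sf snd}\big(a_f[x,d^{(0)},\ldots,d^{(k-1)}]\big)$$
is an honest $\mathcal{A}$-term and satisfies $g_{\vec d}\,x=f(x)$ for all $x\in V_{\vec d}$; that is, $g_{\vec d}$ represents $f|_{V_{\vec d}}$ in $\mathcal{A}$.

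The conceptual heart of the argument --- and the only genuinely nontrivial step --- is the first one: recognising that ``freezing'' the oracle transcript collapses an $\mathcal{A}[F]$-computation into an ordinary $\mathcal{A}$-computation, which is exactly why each restricted piece becomes representable in $\mathcal{A}$. The countability hypothesis on ${\rm im}(F)$ enters solely to guarantee that there are only countably many transcripts, hence countably many pieces. The routine points needing care are the determinism claim (ensuring $\{V_{\vec d}\}$ is a genuine partition rather than merely a cover) and checking that $g_{\vec d}$ is built only from $a_f$, the constants $d^{(j)}$, pairing/projection and application, so that the abstraction operator ${<}x{>}(\cdot)$ indeed produces an element of $\mathcal{A}$.
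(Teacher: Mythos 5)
Your proof is correct and follows essentially the same route as the paper: the paper also partitions ${\rm dom}(f)$ by what it calls ``computation sequences'' (your oracle transcripts), invokes countability of ${\rm im}(F)$ to get countably many pieces, and observes that $f$ restricted to each piece is representable in $\mathcal{A}$. The only difference is that you spell out the two points the paper leaves implicit --- the determinism making the pieces disjoint, and the explicit representing element ${<}x{>}\,{\sf snd}\bigl(a_f[x,d^{(0)},\ldots,d^{(k-1)}]\bigr)$ obtained by freezing the transcript.
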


\begin{proof}
	Let the element $a \in \mathcal{A}$ represent $f$. So for every $b \in \text{dom}(f)$, there is a sequence $c_0,\ldots ,c_{n-1}$ in $\mathcal{A}$ such that
	$$\begin{array}{rcl}
	a [b ,F(c_0),\ldots ,F(c_{i_1})] & = & [{\sf F},c_i]\; \text{for } i<n \\
	a [b ,F(c_0),\ldots ,F(c_{n-1})]) & = & [{\sf T}, f(b)] \end{array}$$
	Call the sequence $(F(c_0),\ldots ,F(c_{n-1}))$ a {\em computation sequence\/} for $b$. Now it is clear that if $V_{(F(c_0),\ldots ,F(c_{n-1}))}$ is the set of all $b$ with $(F(c_0),\ldots ,F(c_{n-1}))$ as computation sequence, then $f|_{V_{(F(c_0),\ldots ,F(c_{n-1}))}}$ is representable in $\mathcal{A}$. Now there are, by assumption on $F$, only countably many computation sequences, so the sets $V_{(F(c_0),\ldots ,F(c_{n-1}))}$ form a countable partition on the domain of $f$.
\end{proof}

\noindent Since $Eq$ is a function that only gives two values, we can use this result to say something about the representable maps in $\mathcal{A}[Eq]$ for certain pcas $\mathcal{A}$.

\begin{theorem}\label{Cnotrep}
	~\begin{itemize}
		\item[a)] The set $\mathcal{P}(\mathbb{N})$ cannot be written as a countable union $\bigcup_{i\in \mathbb{N}}V_i$ such that the complement function $C$ is Scott continuous on each $V_i$.
		\item[b)] The function $C$ is, relative to $\mathcal{P}(\mathbb{N})$, not computable in any function $F:\mathcal{P}(\mathbb{N}) \to \mathcal{P}(\mathbb{N})$ with countable image.
	\end{itemize}
\end{theorem}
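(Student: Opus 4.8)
The plan is to deduce part (b) from part (a), and to prove part (a) by a Baire-category argument that plays the Cantor topology off against the Scott topology.

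\emph{Reducing (b) to (a).} Suppose, for contradiction, that $C$ is computable relative to $\mathcal{P}(\mathbb{N})$ in some $F:\mathcal{P}(\mathbb{N})\to\mathcal{P}(\mathbb{N})$ with countable image; by definition this means $C$ is representable in $\mathcal{P}(\mathbb{N})[F]$. Since $C$ is total, $\mathrm{dom}(C)=\mathcal{P}(\mathbb{N})$, so Proposition~\ref{countableRep} supplies a countable partition $\{V_i\}_{i\in\mathbb{N}}$ of $\mathcal{P}(\mathbb{N})$ for which each $C|_{V_i}$ is representable in $\mathcal{P}(\mathbb{N})$. Representability of a map in $\mathcal{P}(\mathbb{N})$ is witnessed by an element $a$ making $A\mapsto a{\circ}A$ a total Scott-continuous function that agrees with $C$ on $V_i$; hence $C|_{V_i}$ is the restriction of a Scott-continuous map, and in particular $C$ is Scott-continuous on $V_i$. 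This is exactly the configuration part (a) forbids, so (b) follows once (a) is proved.

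\emph{Proving (a).} Assume toward a contradiction that $\mathcal{P}(\mathbb{N})=\bigcup_i V_i$ with $C$ Scott-continuous on each $V_i$. I would pass to the Cantor topology, in which $\mathcal{P}(\mathbb{N})$ is homeomorphic to the Cantor set, hence a compact metric space and therefore a Baire space. A countable cover of a Baire space cannot consist entirely of nowhere-dense sets, so some $V_{i_0}$ is Cantor-dense in a nonempty basic Cantor-open $U_p^q$ with $p,q$ disjoint and finite.

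\emph{The contradiction.} Pick $n_0\notin p\cup q$. By density there is $A\in V_{i_0}$ with $p\subseteq A$ and $A\cap(q\cup\{n_0\})=\emptyset$, so in particular $n_0\notin A$. Scott-continuity of $C$ on $V_{i_0}$ at $A$, applied to the target neighbourhood $U_{\{n_0\}}$ of $C(A)=\mathbb{N}-A$, yields a finite $s\subseteq A$ such that every $B\in V_{i_0}$ with $s\subseteq B$ satisfies $n_0\notin B$. But $s\cup p\cup\{n_0\}$ is disjoint from $q$ (because $s\subseteq A$ and $A\cap q=\emptyset$, $p\cap q=\emptyset$, and $n_0\notin q$), so the density of $V_{i_0}$ in $U_p^q$ produces some $B\in V_{i_0}$ with $s\cup p\cup\{n_0\}\subseteq B$ and $B\cap q=\emptyset$; this $B$ extends $s$ yet contains $n_0$, contradicting the choice of $s$.

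\emph{Main obstacle.} The delicate point is the interplay of the two topologies. Baire category is not available directly in the Scott topology, so the argument must be run in the Cantor topology, where $\mathcal{P}(\mathbb{N})$ is compact and metrizable; yet the hypothesis on $C$ and the order-reversal of complementation live in the Scott topology. The reason the clash is fatal is structural: a Scott-open neighbourhood of $A$ is upward closed, so continuity of the order-reversing map $C$ pins down a single finite ``lower'' witness $s\subseteq A$ forbidding $n_0$ above $s$, whereas Cantor-density always supplies a set above $s$ that does contain $n_0$. The care needed lies in making this tension precise and in verifying that the Baire step genuinely yields density inside a basic Cantor-open (rather than mere non-meagerness), which is what lets the two topologies be confronted at a single point.
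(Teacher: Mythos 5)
Your proof is correct, but it reaches part (a) by a genuinely different route than the paper's. The paper diagonalizes directly: it recursively builds a chain of pairs of finite sets $(p_i,q_i)$, dealing with each $V_i$ in turn --- either $U_{p_i}^{q_i}$ already misses $V_i$, or one enlarges $p_i$ by a Scott-continuity witness $r$ together with a fresh number $n$, or one enlarges $q_i$ by $n$ --- so that $P=\bigcup_i p_i$ lies in every $U_{p_{i+1}}^{q_{i+1}}$ and hence outside every $V_i$, contradicting that the $V_i$ cover $\mathcal{P}(\mathbb{N})$. You instead invoke the Baire category theorem for the Cantor topology (compact metrizable, hence Baire) to find a single $V_{i_0}$ dense in some nonempty basic Cantor open $U_p^q$, and then run the local clash between the upward-closed Scott neighbourhood $U_s$ (forced by continuity of the order-reversing $C$ at a point $A$ avoiding a fresh $n_0$) and Cantor density (which supplies a member of $V_{i_0}$ above $s\cup p\cup\{n_0\}$). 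Your local contradiction is essentially the paper's ``case 2'' step; what the Baire theorem buys you is that you need to confront only one $V_{i_0}$ at one point, rather than maintain a fusion construction over all $i$ simultaneously --- a shorter, more conceptual argument (and it applies verbatim to covers, not just partitions, exactly as the statement requires). What the paper's approach buys is self-containedness: it uses no compactness or metrizability, only elementary bookkeeping with finite sets, and it exhibits the missing point $P$ explicitly. Your reduction of (b) to (a) --- via Proposition~\ref{countableRep}, totality of $C$, and the fact that maps representable in $\mathcal{P}(\mathbb{N})$ are Scott continuous --- is the same as the paper's.
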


\begin{proof}
	Suppose there is a partition $\{V_n\}_{n \in \mathbb{N}}$ of $\mathcal{P}(\mathbb{N})$ such that $C|_{V_n}$ is representable in $\mathcal{P}(\mathbb{N})$, hence continuous in the Scott topology.
	
	We will create a sequence of pairs of finite sets $\{(p_i,q_i)\}_{i \in \mathbb{N}}$ such that for each $i$: 
	\begin{enumerate}
		\item $p_i \cap q_i = \emptyset$
		\item $p_i \subset p_{i+1}$ and $q_i \subset q_{i+1}$
		\item $U_{p_i}^{q_i} \cap V_i = \emptyset$
	\end{enumerate}
	Let $p_0 = \emptyset = q_0$. Given two finite sets $(p_i,q_i)$ such that $p_i \cap q_i = \emptyset$. We construct $(p_{i+1},q_{i+1})$ in three cases. Let $n = \max(p_i \cup q_i)+1$. 
	\medskip
	
	\noindent {\sc case} 1: If $U_{p_i}^{q_i} \cap V_i = \emptyset$, we just take $p_{i+1} = p_i$ and $q_{i+1} = q_i$.
	\medskip
	
	\noindent{\sc case} 2: If $U_{p_i}^{q_i} \cap V_i \cap U^{\{n\}} \neq \emptyset$, take $A \in U_{p_i}^{q_i} \cap V_i \cap U_{n}$. Since $C|_{V_i}$ is Scott continuous, there is a Scott open $W$ such that $C^{-1}(U_{n}) \cap V_i = W \cap V_i$. Since $A \in U^{\{n\}} = C^{-1}(U_{n})$ and $A \in V_i$ we have $A \in W$. So there is a finite set $r$ such that $A \in U_r \subseteq W$. Since $A \in U_{p_i}^{q_i}$, $r \cap q_i = \emptyset$. We take $p_{i+1} = p_i \cup r \cup \{n\}$ and $q_{i+1} = q_i$. Then conditions 1 and 2 are satisfied. Take $A \in U_{p_{i+1}}^{q_{i+1}}$, then $r \subset A$ means $A \in W$ and $n \in A$ means $A \notin U^{\{n\}} = C^{-1}(U_{n})$, hence $A \notin V_i$. So $U_{p_{i+1}}^{q_{i+1}} \cap V_i = \emptyset$.
	\medskip
	
	\noindent{\sc case} 3: $U_{p_i}^{q_i} \cap V_i \cap U^{\{n\}} = \emptyset$. Take $p_{i+1} = p_i$ and $q_{i+1} = q_i \cup \{n\}$. Then conditions 1 and 2 are satisfied, and $U_{p_{i+1}}^{q_{i+1}} \cap V_i = U_{p_i}^{q_i} \cap U^{\{n\}} \cap V_i = \emptyset$.
	
	With such a sequence, $P := \bigcup_i p_i$ has the property that for all $n$, $P \cap q_n = \emptyset$. So for each $n$: $P \in U_{p_n}^{q_n}$ hence $P \notin V_n$. So $P$ is in not included in the partition of $\mathcal{P}(\mathbb{N})$. We have a contradiction and conclude that $C$ is not continuous over any countable partition of $\mathcal{P}(\mathbb{N})$, and by \ref{countableRep}, $C$ not representable in any $\mathcal{P}(\mathbb{N})[F]$ if $F$ has a countable image.
\end{proof}

\noindent We can conclude that the set of maps representable by $\mathcal{P}(\mathbb{N})[Eq]$ is a proper subset of the set of representable maps over $\mathcal{P}(\mathbb{N})[C]$.

We can use a similar argument when talking about $\mathcal{K}_2$. The following general result means that $S$ is never, relative to ${\cal K}_2$, computable in a function with countable image (such as $Eq$).
\begin{theorem}\label{Snotcomp}
	~\begin{itemize}
		\item[a)] The set $\mathbb{N}^\mathbb{N}$ cannot be written as a countable union $\bigcup_{i\in \mathbb{N}}V_i$ such that $S$ is Cantor continuous on each $V_i$.
		\item[b)] The function $S$ is, relative to ${\cal K}_2$, not computable in any function $F:\mathbb{N}^\mathbb{N}\to \mathbb{N}^\mathbb{N}$ with countable image.
	\end{itemize}
\end{theorem}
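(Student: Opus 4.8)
The plan is to prove part~(a) by a forcing/diagonalisation that directly mirrors the proof of Theorem~\ref{Cnotrep}, and then to read off part~(b) from it. For~(b), suppose $S$ were computable, relative to $\mathcal{K}_2$, in some $F$ with countable image, i.e.\ representable in $\mathcal{K}_2[F]$. Since $S$ is total, Proposition~\ref{countableRep} yields a countable partition $\{V_i\}_{i\in\mathbb{N}}$ of $\mathbb{N}^{\mathbb{N}}$ with each restriction $S|_{V_i}$ representable in $\mathcal{K}_2$; by Proposition~\ref{BaireK2} (the Baire topology is repcon for $\mathcal{K}_2$) each $S|_{V_i}$ is then continuous for the Baire topology. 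This is exactly the situation excluded by part~(a), so~(b) follows, and it suffices to prove~(a).

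For~(a) I would run a construction producing, from any countable cover $\{V_i\}$ on whose pieces $S$ is continuous, a single point $\alpha\in\mathbb{N}^{\mathbb{N}}$ lying in no $V_i$, contradicting $\bigcup_i V_i=\mathbb{N}^{\mathbb{N}}$. The analogue of the Cantor conditions $U_p^q$ of Theorem~\ref{Cnotrep} is a pair $(\sigma,q)$ consisting of a finite initial segment $\sigma$ and a finite set $q$ of \emph{reserved values}, with $\mathrm{im}(\sigma)\cap q=\emptyset$; the corresponding condition set is
\[
\mathcal{C}_{\sigma,q}\;=\;\{\alpha\in\mathbb{N}^{\mathbb{N}}\mid \sigma\sqsubset\alpha,\ \mathrm{im}(\alpha)\cap q=\emptyset\}.
\]
I build nested conditions $(\sigma_i,q_i)$ with $\sigma_i\sqsubset\sigma_{i+1}$, $q_i\subseteq q_{i+1}$, $|\sigma_i|\to\infty$, all reserved values chosen larger than everything committed so far, and the decisive property $\mathcal{C}_{\sigma_i,q_i}\cap V_i=\emptyset$ at every stage. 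Freshness of the reserved values guarantees that committed values never get reserved at a later stage, so the $\sigma_i$ converge to a total $\alpha$ with $\mathrm{im}(\alpha)\cap q_i=\emptyset$ for all $i$; hence $\alpha\in\mathcal{C}_{\sigma_i,q_i}$ and therefore $\alpha\notin V_i$ for every $i$.

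The step from $(\sigma_i,q_i)$ to $(\sigma_{i+1},q_{i+1})$ rests on one observation. Fix a fresh value $v=\max(\mathrm{im}(\sigma_i)\cup q_i)+1$ and put $n=v-1$, so that $S(\alpha)(n)=0$ exactly when $v\in\mathrm{im}(\alpha)$ and $S(\alpha)(n)=n+1$ otherwise; write $T_n=\{\alpha\mid v\in\mathrm{im}(\alpha)\}$. Because $S(\alpha)(n)$ takes only the two values $0$ and $n+1$, continuity of $S|_{V_i}$ makes \emph{both} $T_n\cap V_i$ and its complement relatively open in $V_i$: there are Baire-open $W,W'$ with $T_n\cap V_i=W\cap V_i$ and $(\mathbb{N}^{\mathbb{N}}\setminus T_n)\cap V_i=W'\cap V_i$. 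I then split into three cases. If $\mathcal{C}_{\sigma_i,q_i}\cap V_i=\emptyset$ I extend trivially (padding $\sigma$, leaving $q$). If some $\alpha\in\mathcal{C}_{\sigma_i,q_i}\cap V_i$ has $v\notin\mathrm{im}(\alpha)$, then $\alpha\in W'$, so a long enough initial segment $\tau\sqsupseteq\sigma_i$ of $\alpha$ satisfies $U_\tau\subseteq W'$ and $v\notin\mathrm{im}(\tau)$; I take $\sigma_{i+1}$ to be $\tau$ with $v$ appended and $q_{i+1}=q_i$, which forces every $\alpha'\in\mathcal{C}_{\sigma_{i+1},q_{i+1}}$ simultaneously into $U_\tau\subseteq W'$ and into $T_n$, so no such $\alpha'$ can lie in $V_i$. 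Finally, if $\mathcal{C}_{\sigma_i,q_i}\cap V_i\subseteq T_n$, I simply reserve $v$ (set $q_{i+1}=q_i\cup\{v\}$ and pad $\sigma$): every point of $\mathcal{C}_{\sigma_{i+1},q_{i+1}}$ then avoids $v$, so were it in $V_i$ it would lie in $\mathcal{C}_{\sigma_i,q_i}\cap V_i\subseteq T_n$ and hence contain $v$ in its image, a contradiction. In all three cases $\mathcal{C}_{\sigma_{i+1},q_{i+1}}\cap V_i=\emptyset$.

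The main obstacle is exactly the point where the argument departs from Theorem~\ref{Cnotrep}. In $\mathcal{P}(\mathbb{N})$ with Cantor conditions one can assert ``$n\notin A$'' with finite information, and the decisive trick there is to keep the open witness $U_r$ away from the coordinate $n$ one then forces into $A$. In $\mathbb{N}^{\mathbb{N}}$ there is no finite Baire information witnessing $v\notin\mathrm{im}(\alpha)$, so a literal transcription fails: the open witness for a point of $T_n\cap V_i$ may already display $v$ in the image, blocking any attempt to reserve $v$. The resolution — the step I expect to need the most care — is the clopen-fibre observation together with the third case: when every surviving point already hits $v$ one needs no witness and may reserve $v$ outright, while when some surviving point avoids $v$ one uses the \emph{second} open set $W'$ to force $v$ into the image instead. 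Checking that the freshness discipline on reserved values keeps all conditions mutually consistent and produces a total limit $\alpha$ is then routine.
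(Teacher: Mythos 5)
Your proposal is correct and takes essentially the same approach as the paper: your conditions $\mathcal{C}_{\sigma,q}$ are exactly the paper's sets $U_{\sigma}^{\rho}$, your dichotomy at a fresh value $v$ (force $v$ into the image using an open witness for the relatively open set $\{\alpha \mid v\notin\mathrm{im}(\alpha)\}\cap V_i$, versus reserving $v$ outright when every surviving point of $V_i$ has $v$ in its image) is precisely the paper's Case A/Case B, and part (b) is read off from (a) via Proposition~\ref{countableRep} just as in the paper. The remaining differences are cosmetic: your second open set $W$ is never used, your explicit trivial case and padding of $\sigma$ are handled implicitly in the paper, and your stated invariant has a harmless off-by-one in the indexing.
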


\begin{proof} 
	Suppose $(V_i)_{i\in \mathbb{N}}$ is a collection of subsets of $\mathbb{N}^\mathbb{N}$ such that $S$ is continuous on each $V_i$. We shall construct an $\alpha\in \mathbb{N}^\mathbb{N}$ such that $\alpha\not\in\bigcup_{i\in \mathbb{N}}V_i$.
	
	To this end we construct a sequence of pairs $(\sigma _i, \rho _i)_{i\in \mathbb{N}}$ with the following properties: $\sigma _i$ is a finite sequence of numbers, $\rho _i$ is a finite set of numbers, and the following hold:\begin{itemize}
		\item[i)] ${\rm im}(\sigma _i)\cap\rho _i=\emptyset$
		\item[ii)] $\sigma _i$ is an initial segment of $\sigma _{i+1}$; $\rho _i\subseteq\rho _{i+1}$
		\item[iii)] writing $U_{\sigma}^{\rho}=\{\alpha\in \mathbb{N}^\mathbb{N}\, |\, \sigma \text{ is an initial segment of } \alpha ,{\rm im}(\alpha )\cap\rho =\emptyset\}$, we have $U_{\sigma _{i+1}}^{\rho _{i+1}}\cap V_i=\emptyset$\end{itemize}
	Clearly, given such a sequence, there is an $\alpha\in\bigcap_{i\in \mathbb{N}}U_{\sigma _i}^{\rho _i}$, and this $\alpha$ cannot be in any $V_i$.
	
	Now for the construction: let $\sigma _0$ be the empty sequence; $\rho _0=\emptyset$.
	
	Suppose $(\sigma _i,\rho _i)$ have been constructed. Let $m$ be the first number such that $m+1\not\in {\rm im}(\sigma _i\cup\rho _i$. We consider the set
	$$Z_i\, =\, V_i\cap U_{\sigma _i}^{\rho _i}\cap\{\alpha\in \mathbb{N}^\mathbb{N}\, |\, m+1\not\in {\rm im}(\alpha )\}$$
	Note that $\{\alpha \in \mathbb{N}^\mathbb{N}\, |\, m+1\not\in {\rm im}(\alpha )\} = S^{-1}(\{\alpha\in \mathbb{N}^\mathbb{N}\, |\, \alpha (m)=m+1\} )$.
	Since $\{\alpha \in \mathbb{N}^\mathbb{N}\, |\, \alpha (m)=m+1\}$ is open in the Baire space topology and $S$ is continuous on $V_i$, we have an open set $W$ such that
	$$Z_i\, =\, V_i\cap U_{\sigma _i}^{\rho _i}\cap W$$
	We distinguish two cases:
	
	\noindent {\sc case A:} $Z_i \neq \emptyset$. There must be some extension $\tau$ of $\sigma _i$ such that $V_i\cap U_{\tau}\cap U_{\sigma _i}^{\rho _i}$ is a nonempty subset of $Z_i$. Let $\sigma _{i+1}$ be $\sigma _i{\ast}(m+1)$ ($m+1$ appended to $\tau$ as last element); let $\rho _{i+1}=\rho _i$. Then i) and ii) are satisfied; and if $\alpha \in U_{\sigma _{i+1}}^{\rho _{i+1}}$ then $\alpha\in U_{\tau}\cap U_{\sigma _i}^{\rho _i}$, so, since $\alpha \not\in Z_i$, we must have $\alpha\not \in V_i$. So iii) holds as well.
	\medskip
	
	\noindent {\sc case B:} $Z_i =\emptyset$. Then for all $\alpha\in V_i\cap U_{\sigma _i}^{\rho _i}$ we have $m+1\in {\rm im}(\alpha )$. Let $\sigma _{i+1}=\sigma _i$; $\rho _{i+1}=\rho _i\cup\{ m+1\}$. Again, i) and ii) are satisfied and for $\alpha\in U_{\sigma _{i+1}}^{\rho _{i+1}}$ we cannot have $\alpha\in V_i$. This finishes the construction of the sequence $(\sigma _i,\rho _i)$ and proves part a) of the theorem.
	\medskip
	
	Part b) is a consequence of part a) and proposition \ref{countableRep}.
\end{proof}

\noindent We see that the recursion theory of ${\cal K}_2$ is radically different from the ordinary case: the function $S$ is, for example, not computable in its own graph seen as a characteristic function of ordered pairs. A similar conclusion holds for $\mathcal{P}(\mathbb{N})$.

Given the equivalence between $2^{\omega}$ and $\mathcal{K}_2$ in \ref{K2equiv} and theorem \ref{Snotcomp}, the following theorem does not come as a surprise.

\begin{theorem}\label{counteqnotrep}
	~\begin{itemize}
		\item[a)] The set $2^{\omega}$ cannot be written as a countable union $\bigcup_{i\in \mathbb{N}}V_i$ such that $Eq_{\infty}$ is Cantor continuous on each $V_i$.
		\item[b)] The function $Eq_{\infty}$ is, relative to $2^{\omega}$, not computable in any function $F:2^{\omega} \to 2^{\omega}$ with countable image.
	\end{itemize}
\end{theorem}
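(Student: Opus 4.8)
The plan is to reduce part (b) to part (a) exactly as in Theorem~\ref{Snotcomp}, and then to prove part (a) by a tree argument patterned on that theorem. For the reduction: if $Eq_\infty$ (viewed as a unary map $2^\omega\to 2^\omega$ via the internal pairing $[A,B]=2A\cup(2B+1)$) were representable in $2^\omega[F]$ for some $F$ with countable image, then Proposition~\ref{countableRep} would give a countable partition $\{V_i\}$ of $\mathrm{dom}(Eq_\infty)=2^\omega$ with each $Eq_\infty|_{V_i}$ representable in $2^\omega$. Since the Cantor topology is repcon for $2^\omega$ (Section~\ref{variationsection}), each restriction would be Cantor continuous, contradicting part (a). So everything rests on part (a).

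For part (a) I would encode a pair $(A,B)$ as a single $X\in 2^\omega$ as above, so that ``column $n$ agrees'', i.e.\ $A_n=B_n$ (where $A_n=\{m\mid[n,m]\in A\}$), becomes the condition $\forall m\,(2[n,m]\in X\Leftrightarrow 2[n,m]+1\in X)$, and $n\in Eq_\infty(X)$ iff column $n$ agrees. The key topological observation, the analogue of $\{\alpha\mid m+1\notin\mathrm{im}(\alpha)\}=S^{-1}(\cdots)$ in Theorem~\ref{Snotcomp}, is that $\{X\mid n\in Eq_\infty(X)\}$ is Cantor \emph{closed} while its complement is open. Assuming $2^\omega=\bigcup_i V_i$ with $Eq_\infty$ Cantor continuous on each $V_i$, I would build a decreasing sequence of nonempty \emph{closed} sets $G_i$, each of the form $U_{p_i}^{q_i}\cap\bigcap_{n\in R_i}\{X\mid n\in Eq_\infty(X)\}$ for finite disjoint $p_i,q_i$ and a finite set $R_i$ of ``agreement columns'', with $G_{i+1}\cap V_i=\emptyset$. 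At stage $i$ I pick a fresh column $n$ (no coordinate of column $n$ lies in $p_i\cup q_i$, and $n\notin R_i$) and consider $G_i\cap V_i\cap\{X\mid n\in Eq_\infty(X)\}$. By continuity of $Eq_\infty$ on $V_i$ this set equals $G_i\cap V_i\cap W$ for a Cantor open $W$. If it is nonempty I choose a basic clopen $U_{p'}^{q'}\subseteq W\cap U_{p_i}^{q_i}$ and enlarge $p_i,q_i$ to force a \emph{disagreement} $2[n,m]\in p_{i+1}$, $2[n,m]+1\in q_{i+1}$ at a fresh $m$, keeping $R_{i+1}=R_i$; any $X\in G_{i+1}\cap V_i$ then has $n\notin Eq_\infty(X)$ yet lies in $V_i\cap W$, a contradiction. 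If it is empty, every $X\in G_i\cap V_i$ has $n\notin Eq_\infty(X)$, so I put $n$ into $R_{i+1}=R_i\cup\{n\}$ (leaving $p,q$ fixed), and then $G_{i+1}\cap V_i=\emptyset$ because $G_{i+1}$ forces column $n$ to agree. Freshness of $n$ guarantees $G_{i+1}\neq\emptyset$ in both cases.

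The main obstacle, and the point where $Eq_\infty$ differs from $C$ and $S$, is the ``empty'' case: there I must force agreement on an entire infinite column, not just fix finitely many coordinates. This is handled exactly as the ``forbidden value'' device $\rho_i$ of Theorem~\ref{Snotcomp}: the finite datum $n\in R_i$ records an infinite but closed constraint. Because each $G_i$ is then a nested nonempty \emph{closed} subset of the \emph{compact} Cantor space $2^\omega$, the finite intersection property yields a point $X^\ast\in\bigcap_i G_i$; no explicit construction of $X^\ast$ (and no delicate consistency check) is needed. Since $X^\ast\in G_{i+1}$ and $G_{i+1}\cap V_i=\emptyset$ for every $i$, we have $X^\ast\notin\bigcup_i V_i$, contradicting that the $V_i$ cover $2^\omega$. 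The only calculations to verify in detail are the disjointness and nesting bookkeeping for $(p_i,q_i,R_i)$, which are routine once one notes that each refinement retains the witness point used to produce it.
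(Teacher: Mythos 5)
Your proposal is correct and follows essentially the same argument as the paper's proof: the reduction of (b) to (a) via Proposition~\ref{countableRep} and repcon-ness of the Cantor topology is identical, and your two-case forcing for (a) --- forcing a finite disagreement inside the relatively open preimage when the intersection is nonempty, and recording an infinite-but-closed whole-column constraint when it is empty, with a freshness invariant on columns --- is exactly the paper's construction. The only differences are cosmetic: the paper first replaces $Eq_\infty$ by the projection $P$ (with $P(\alpha)(n)=1\Leftrightarrow\exists m\,\alpha([n,m])=1$), so that column constraints read ``column identically zero'' rather than ``column agreement'', and it concludes by extending the union of the compatible partial constraints to a total $0$--$1$ function, where you instead invoke compactness of Cantor space and the finite intersection property.
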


\begin{proof}
	Some notation first, for $\sigma \in 2^*$, write\\ 
	$U_{\sigma} := \{\alpha \in 2^{\omega} | \sigma \text{ is an initial segment of } \alpha \}$ and $U_{n \mapsto t} := \{\alpha \in 2^{\omega} | \alpha(n) = t \}$. These are Cantor open sets. Take $P: 2^{\omega} \rightarrow 2^{\omega}$ the projection map where $P(\alpha)(n) = 1 \Leftrightarrow \exists m: \alpha([ n,m ]) = 1$. Note that there is a Cantor continuous maps $f$ such that $P = Eq_{\infty} \circ f$. So we will prove the theorem for $P$ instead of $Eq_{\infty}$.
	
	Assume there is a countable partition $\{V_i\}_{i \in \mathbb{N}}$ of $2^{\omega}$ such that for all $i$, $P|_{V_i}$ is Cantor continuous. We inductively define a sequence of compatible partial maps $\mathbb{N} \rightarrow 2$, beginning with the map $f_0$ that is nowhere defined. For any partial map $f$, we define
	\[
	W_f := \{\alpha \in 2^{\omega} | \forall n: f(n)\downarrow \Rightarrow \alpha(n) = f(n)\}
	\]
	So $W_{f_0} = 2^{\omega}$. In each step, from $f_i$ we construct an extension $f_{i+1}$ such that $\forall n: f_i(n)\downarrow \Rightarrow (f_{i+1}(n) = f_i(n))$ and $W_{f_i} \cap V_i = \emptyset$. If such a sequence exist, then there is an extension $f \in 2^{\omega}$ of all $f_i$ such that $f \notin V_i$ for all $i$. Which is impossible, since the $V_i$-s form a partition.
	
	Let $p_n: 2^{\omega} \rightarrow 2^{\omega}$ be the $n$-th projection: $p_n(\alpha)(m) = \alpha([ n,m ])$. During the construction of the $f_i$-s, we will also prove for each $f_i$ that there is a $\nu_i$ with the following property:
	\[
	\forall m \geq \nu_i: p_m(W_{f_i}) = 2^{\omega} (\Leftrightarrow \forall m \geq \nu_i, \forall k \in \mathbb{N}: f_i([ k,m ])\uparrow)
	\]
	In case of $f_0$ we have for all $m \in \mathbb{N}$: $p_m(W_{f_0}) = p_m(2^{\omega}) = 2^{\omega}$. So we can take $\nu_0 = 0$.
	
	Assume we have $f_i$ and $\nu_i$ such that $\forall m \geq \nu_i: p_m(W_{f_i}) = 2^{\omega}$. If $W_{f_i} \cap V_i = \emptyset$, we just take $f_{i+1} = f_i$ and $\nu_{i+1} = \nu_i$. Now assume $W_{f_i} \cap V_i \neq \emptyset$. Since $P|_{V_i}$ is Cantor continuous, there is a Cantor open $O$ such that $P^{-1}(U_{\nu_i \mapsto 1}) \cap V_i = O \cap V_i$. We distinguish two cases. 
	\medskip
	
	\noindent {\sc case} 1: $W_{f_i} \cap O \neq \emptyset$, take some $\alpha \in W_{f_i} \cap O$. Since $O$ is a Cantor open, $O = \bigcup_j U_{\sigma_j}$ for certain finite sequences $\sigma_j$. $\alpha \in O$ means there is a $j$ such that $\sigma_j$ is an initial segment of $\alpha$. Let the partial map $g: \mathbb{N} \rightarrow 2$ be the extension of both $f_i$ and $\sigma_j$ ($g(m) = \sigma_j(m)$ if $m < lh(\sigma_j)$, else $g(m) = f_i(m)$). Here, $lh$ gives the length of the sequence. So $\alpha \in W_g$. Since $p_{\nu_i}(W_{f_i}) = 2^{\omega}$ and $\sigma_j$ is finite, we can find an $m$ such that $g([ \nu_i,m ])\uparrow$. Let $f_{i+1}$ be the extension of $g$ defined on $[ n,m ]$ as $1$. So $W_{f_{i+1}} = W_{f_i} \cap U_{\sigma_j} \cap U_{[ \nu_i,m ] \rightarrow 1}$. For $\beta \in W_{f_{i+1}}$, we have $\beta \in O$ since $\beta \in U_{\sigma_j}$ and $\beta \in P^{-1}(U_{\nu_i \mapsto 0})$ since $\beta([ \nu_i,m ]) = 1$. So $\beta \notin V_i$. Secondly, since $\sigma_j$ is finite, there are only finitely many additions to $f_i$, so there must be an $n$ such that $\forall m \geq n: p_m(W_{f_{i+1}}) = 2^{\omega}$. Take $\nu_{i+1}$ such an $n$.
	\medskip
	
	\noindent {\sc case} 2: $W_{f_i} \cap O = \emptyset$. This means that for any $\alpha \in W_{f_i} \cap f_i$, we have $P(\alpha)(\nu_i) = 0$, so there must be an $m$ such that $\alpha([ \nu_i,m ]) = 1$. Let $f_{i+1}$ be the extension of $f_i$ where $f_{i+1}([ \nu_i,m ]) = 0$ for all $m$, and everywhere else $f_{i+1}(m) = f_i(m)$ (Note that $f_i$ was not yet defined on those $[ \nu_i,m ]$ because of the $\nu_i$ condition). So, for each $\beta \in W_{f_{i+1}}$ we have $P(\beta)(\nu_i) = 1$, hence $W_{f_{i+1}} \cap V_i = \emptyset$. Note that we can take $\nu_{i+1} = \nu_i+1$, since we only extended over the $[ \nu_i,m ]$-s.
	
	That finishes the construction of the $f_i$. Since they are compatible, there is an $f: \mathbb{N} \rightarrow 2$ extending all of them. For this $f$ we have for all $i \in \mathbb{N}$: $\{f\} \cap V_i \subseteq W_{f_i} \cap V_i = \emptyset$. So $f$ is not included in the partition. This is a contradiction. So $P$ is not continuous over a countable partition of $2^{\omega}$, so neither is $Eq_{\infty}$. Part b) follows from \ref{countableRep}.
\end{proof}

\noindent We have seen that adding equality to a pca adds only limited computational power. Could it be that the pca with $Eq$ added, could still be simulated in the old one? For instance in the case of $\mathcal{P}(\mathbb{N})$, $\mathcal{K}_2$ and $2^{\omega}$?

\begin{proposition}\label{countBasis}
	Let $\mathcal{A}$ be a decidable pca with uncountably many elements. Let $\mathcal{B}$ be a pca such that there is a countably based non-trivial topology which is repcon for $\mathcal{B}$. Then there is no decidable applicative morphism from $\mathcal{A}$ to $\mathcal{B}$.
\end{proposition}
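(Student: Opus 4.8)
The plan is to exploit decidability of $\mathcal{A}$ to manufacture, for each element $a\in\mathcal{A}$, a representable (hence $\mathcal{T}$-continuous) ``is it $a$?'' test on $\mathcal{B}$, and then to use the countable basis of $\mathcal{T}$ to show that only countably many such tests can be genuinely distinct, contradicting $|\mathcal{A}|>\aleph_0$.

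First I would fix a decidable applicative morphism $\gamma:\mathcal{A}\to\mathcal{B}$ with decidability element $d\in\mathcal{B}$ (so $du={\sf T}_{\mathcal{B}}$ for $u\in\gamma({\sf T}_{\mathcal{A}})$ and $du={\sf F}_{\mathcal{B}}$ for $u\in\gamma({\sf F}_{\mathcal{A}})$). For each $a\in\mathcal{A}$ consider $g_a(x)=Eq([a,x])$; since $\mathcal{A}$ is decidable this is representable in $\mathcal{A}$, with $g_a(x)={\sf T}_{\mathcal{A}}$ iff $x=a$ and ${\sf F}_{\mathcal{A}}$ otherwise. Because $\gamma$ is an applicative morphism, $g_a$ is representable in $\mathcal{B}$ with respect to $\gamma$: applying the realizer of $\gamma$ to a realizer of a representative of $g_a$ yields some $r_a\in\mathcal{B}$ with $r_a b\in\gamma(g_a(x))$ for every $b\in\gamma(x)$. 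Composing with $d$, the element $E_a={<}z{>}d(r_a z)$ sends any $b\in\gamma(a)$ to ${\sf T}_{\mathcal{B}}$ and any $b\in\gamma(x)$ with $x\ne a$ to ${\sf F}_{\mathcal{B}}$.

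Next I would use non-triviality of $\mathcal{T}$: pick an open $U$ with $p\in U$ and $q\notin U$ for some $p,q\in\mathcal{B}$. Put $\Phi_a={<}z{>}{\sf If}\ (E_a z)\ {\sf then}\ p\ {\sf else}\ q$, a representable, hence $\mathcal{T}$-continuous, map whose domain contains $G:=\bigcup_{x\in\mathcal{A}}\gamma(x)$. On $G$ it maps $\gamma(a)$ to $p\in U$ and every other $\gamma(x)$ to $q\notin U$, so $\Phi_a^{-1}(U)\cap G$ is open in the subspace $G$, contains any chosen representative $\beta_a\in\gamma(a)$, and excludes every $\beta_{a'}$ with $a'\ne a$. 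Finally, since $\mathcal{T}$ is countably based, $G$ has a countable basis $\{B_n\cap G\}_{n}$; for each $a$ choose $n(a)$ with $\beta_a\in B_{n(a)}\cap G\subseteq\Phi_a^{-1}(U)$. If $n(a)=n(a')$ for $a\ne a'$ then $\beta_{a'}\in\Phi_a^{-1}(U)$, which is impossible; thus $a\mapsto n(a)$ is an injection $\mathcal{A}\hookrightarrow\mathbb{N}$, contradicting uncountability.

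I expect the main work to be the careful transfer of the equality tests: verifying that the representability of $g_a$ in $\mathcal{A}$ passes to representability in $\mathcal{B}$ with respect to $\gamma$, that $d$ really returns the exact booleans ${\sf T}_{\mathcal{B}},{\sf F}_{\mathcal{B}}$ (not merely elements of their $\gamma$-fibres), and that $\Phi_a$ is total on all of $G$ so that repcon-continuity actually applies there. Once these points are in place, the pigeonhole against the countable basis is routine.
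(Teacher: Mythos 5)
Your proposal is correct and follows essentially the same route as the paper's own proof: transfer the equality test $Eq(a,-)$ through $\gamma$ and the decider $d$ to get a representable ``is it $a$?'' map on $\mathcal{B}$, use the nontrivial open set together with the repcon property to separate the fibre $\gamma(a)$ from all other fibres by an open set, and then pigeonhole against the countable basis to embed $\mathcal{A}$ into $\mathbb{N}$. The only cosmetic difference is that the paper first produces an open $V$ separating ${\sf T}_{\mathcal{B}}$ from ${\sf F}_{\mathcal{B}}$ and pulls it back along the test map, whereas you post-compose with a definition-by-cases map and pull back the given open $U$ directly.
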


\begin{proof}
	Let $\gamma: \mathcal{A} \to \mathcal{B}$ be a decidable applicative morphism, and let $\mathcal{T}$ be a non-trivial countably based topology which is repcon for $\mathcal{B}$. Let $\{U_i\}_{i \in \mathbb{N}}$ be a countable basis of $\mathcal{T}$ and take $\Gamma = \bigcup_{a \in \mathcal{A}} \gamma(a)$.
	
	Let $U$ be a non-trivial open in $\mathcal{T}$. Take $x \in U$ and $y \in \mathcal{B}-U$. By definition of the Booleans, there is a representable map sending ${\sf T}_{\mathcal{B}}$ to $x$ and ${\sf F}_{\mathcal{B}}$ to $y$. Since $\mathcal{T}$ is repcon for $\mathcal{B}$ there must be an open $V$ such that ${\sf T}_{\mathcal{B}} \in V$ and ${\sf F}_{\mathcal{B}} \notin V$.
	
	Take an element $x \in \mathcal{A}$ and denote $d_x: \mathcal{A} \rightarrow \mathcal{A}$ the map $d_x(a) := Eq(x,a)$ which is representable in $\mathcal{A}$ because of decidability. Since $\gamma$ is a decidable applicative morphism, we can construct a partial map $r_x: \mathcal{B} \rightarrow \mathcal{B}$ such that:
	\[
	\forall a \in \mathcal{A}, b \in \gamma(a): r_x(b) := \begin{cases} 	{\sf T}_{\mathcal{B}}   & \mbox{if } a=x \\
	{\sf F}_{\mathcal{B}} 	& \mbox{otherwise }
	\end{cases}
	\]
	Note that for $y \in \mathcal{A}$ with $y \neq x$ we have $r_x(\gamma(y)) = \{{\sf F}_{\mathcal{B}}\}$ so $\gamma(x) \cap \gamma(y) = \emptyset$. So $\{\gamma(y)\}_{y \in \mathcal{A}}$ forms a partition of $\Gamma$.
	
	Now, since $\mathcal{T}$ is repcon for $\mathcal{B}$, there must be an open $U$ such that $U \cap \text{dom}(r_x) = r_x^{-1}(V)$. We have $\Gamma \subset \text{dom}(r_x)$, so $U \cap \Gamma = r_x^{-1}(V) \cap \Gamma$. Take $v \in \gamma(x) \neq \emptyset$, then $v \in \Gamma$ and $r_x(v) = {\sf T}_{\mathcal{B}} \in V$. So $r_x^{-1}(V) \cap \Gamma \neq \emptyset$. 
	Now take $b \in r_x^{-1}(V) \cap \Gamma$. Since $b \in \Gamma$ there is an $a \in \mathcal{A}$ such that $b \in \gamma(a)$. Since $b \in r_x^{-1}(V)$ we have $r_x(b) \neq {\sf F}_{\mathcal{B}}$, so $a = x$. We get that $\emptyset \neq r_x^{-1}(V) \cap \Gamma \subset \gamma(x)$. Since $\mathcal{T}$ has a countable basis and $b \in U$ there must be an $i \in \mathbb{N}$ such that $b \in U_{i_x}$. We can conclude that $\emptyset \neq U_i \cap \Gamma \subset U \cap \Gamma = r_x^{-1}(V) \cap \Gamma \subset \gamma(x)$.
	
	So for any $x \in \mathcal{A}$ there is an $i \in \mathbb{N}$ such that $\emptyset \neq U_i \cap \Gamma \subset \gamma(x)$. Since for $x \neq y$ we have $\gamma(x) \cap \gamma(y) = \emptyset$, we must have a distinct $i \in \mathbb{N}$ for each $x \in \mathcal{A}$. But this is in contradiction with the fact that $\mathcal{A}$ is uncountable. We conclude $\gamma$ cannot exist.
\end{proof}

\noindent Now, since the Scott, Baire and Cantor topologies do have countable bases, we get the following direct consequence.

\begin{corollary}\label{eqismore}
	There are no decidable applicative morphisms from $\mathcal{P}(\mathbb{N})[Eq]$, $\mathcal{K}_2[Eq]$ and $2^{\omega}[Eq]$ into $\mathcal{P}(\mathbb{N})$, $\mathcal{K}_2$ and $2^{\omega}$.
\end{corollary}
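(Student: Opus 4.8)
The plan is to invoke Proposition~\ref{countBasis} once for each source/target pair appearing in the statement; since that proposition already carries all the weight, the proof reduces to checking its two hypotheses --- that the source is a decidable pca with uncountably many elements, and that the target admits a nontrivial, countably based, repcon topology --- in each of the (nine) cases.

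First I would verify the hypothesis on the source pcas. Each of $\mathcal{P}(\mathbb{N})[Eq]$, $\mathcal{K}_2[Eq]$ and $2^{\omega}[Eq]$ has, by construction, the same underlying set as $\mathcal{P}(\mathbb{N})$, $\mathbb{N}^{\mathbb{N}}$ and $2^{\omega}$ respectively, and all three carriers have the cardinality of the continuum, so each source is uncountable. For decidability I would appeal to Theorem~\ref{extensiontheorem}: by the defining property of the extension, $Eq$ is representable in $\mathcal{A}[Eq]$ with respect to $\iota_{Eq}$, and since $\iota_{Eq}$ is the identity relation this says exactly that $Eq$ is representable in $\mathcal{A}[Eq]$ in the ordinary sense. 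The one point requiring a moment's care is that $Eq$ returns the booleans ${\sf T}_{\mathcal{A}},{\sf F}_{\mathcal{A}}$ of the base pca, which need not coincide with the booleans of $\mathcal{A}[Eq]$; but $\iota_{Eq}$ is a \emph{decidable} morphism, so there is a realizer sending ${\sf T}_{\mathcal{A}},{\sf F}_{\mathcal{A}}$ to ${\sf T}_{\mathcal{A}[Eq]},{\sf F}_{\mathcal{A}[Eq]}$, and post-composing with it turns the representation of $Eq$ into a decision procedure for equality in $\mathcal{A}[Eq]$. Hence every source pca is decidable and uncountable.

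Next I would check the hypothesis on the target pcas. For $\mathcal{P}(\mathbb{N})$ I take the Scott topology, which is repcon (in fact conrep, as recorded in Section~\ref{variationssection}) and is countably based, its basis $\{U_p\mid p\subseteq\mathbb{N}\ \text{finite}\}$ being indexed by the finite subsets of $\mathbb{N}$. For $\mathcal{K}_2$ I take the Baire topology, repcon by Proposition~\ref{BaireK2} and countably based via the sets $U_{\sigma}$ for finite sequences $\sigma$. For $2^{\omega}$ I take the Cantor topology, which is repcon for $2^{\omega}$ (Section~\ref{variationsection}) and countably based via the $U_{\sigma}$ for $\sigma\in 2^{*}$. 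All three of these topologies are nontrivial. With both hypotheses of Proposition~\ref{countBasis} confirmed for each combination of a source from the first list and a target from the second, the proposition applies verbatim and excludes a decidable applicative morphism in every case. There is no substantive obstacle in the argument; the only subtlety is the boolean-matching remark in the decidability check, which is disposed of by the decidability of $\iota_{Eq}$.
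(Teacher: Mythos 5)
Your proposal is correct and follows exactly the paper's route: the paper proves this corollary in one line by applying Proposition~\ref{countBasis}, noting that the Scott, Baire and Cantor topologies are nontrivial, repcon, and countably based. Your write-up merely makes explicit the hypothesis checks (uncountability, decidability of $\mathcal{A}[Eq]$ via Theorem~\ref{extensiontheorem} together with the boolean-conversion realizer from decidability of $\iota_{Eq}$) that the paper leaves implicit.
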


\section{Recursive aspects}\label{recursivesection}
A \textit{sub-pca} $\mathcal{B}$ of a pca $\mathcal{A}$ is a pca defined on a subset of $\mathcal{A}$, inheriting the applicative structure of $\mathcal{A}$ and containing some choice of $k$ and $s$ functioning as the appropriate combinators for both $\mathcal{A}$ and $\mathcal{B}$.

Let $RE \subset \mathcal{P}(\mathbb{N})$ be the recursively enumerable subsets of $\mathbb{N}$. With the application from $\mathcal{P}(\mathbb{N})$, $RE$ forms a sub-pca of $\mathcal{P}(\mathbb{N})$ (see \cite{Bauer}). Note that for any $A \in RE$, we have that its complement $C(A)$ is in $RE$, precisely if it is recursive. So in $RE$, $C$ is a partial map defined on the subset $Rec \subset RE$ of recursive sets. By the same proof of decidability of $\mathcal{P}(\mathbb{N})[C]$ we can see that $Eq|_{Rec}$ is representable in $RE[C]$.

Now consider the following set: $Uni = \{[ n,m ] | \phi_n(m) \downarrow \}$, containing the pairs $n$ and $m$ such that the $n$-th Turing machine halts with input $m$. Since we have a universal Turing machine, this set is recursively enumerable. It also contains all $RE$ sets, meaning for any $A \in RE$, there is an $n$ such that $Uni_n := \{m \in \mathbb{N} | [ n,m ] \in Uni \} = A$. This allows us to enumerate all $RE$ sets and search through them.

\begin{lemma}\label{UniRec}
	$C$ is representable in $RE[Eq]$.
\end{lemma}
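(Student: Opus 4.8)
The plan is to compute $C(A)=\mathbb{N}-A$, for $A$ ranging over the recursive sets $Rec$, by a search through the enumeration $Uni_n$ of all r.e.\ sets, using $Eq$ as the oracle that confirms when the correct complement has been reached. The starting observation is that $\mathbb{N}-A$ is itself r.e.\ (this is exactly what recursiveness of $A$ gives us), so $\mathbb{N}-A=Uni_{n_0}$ for some $n_0$. Moreover, for every $n$ one has the characterisation
$$Uni_n=\mathbb{N}-A\quad\Longleftrightarrow\quad Uni_n\cap A=\emptyset\ \text{ and }\ Uni_n\cup A=\mathbb{N},$$
and both right-hand conditions are equalities of r.e.\ sets, so they can be tested by single applications of $Eq$, namely $Eq([Uni_n\cap A,\emptyset])$ and $Eq([Uni_n\cup A,\mathbb{N}])$.

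First I would describe the representing element $a_C\in RE$ in the oracle-computation format that defines application in $RE[Eq]$ (section~\ref{extensionsection}): given the input $A$ together with the coded history $[Eq(e_0),\dots,Eq(e_{i-1})]$ of oracle answers so far, $a_C$ reads off from the length of that history which candidate $n$ and which of the two sub-queries is current. As long as no candidate has passed both tests, it returns the next query tagged with {\sf F}, i.e.\ either $[{\sf F},[Uni_n\cap A,\emptyset]]$ or $[{\sf F},[Uni_n\cup A,\mathbb{N}]]$; as soon as the history records, for some $n$, that both associated answers were {\sf T}, it returns $[{\sf T},Uni_n]$. Unwinding the definition of $\cdot_{Eq}$, the computation sequence for $A$ is precisely the list of $Eq$-answers to these queries, and since every query is an equality of r.e.\ sets each answer is defined; hence the search runs without divergence and halts at the first $n_0$ with $Uni_{n_0}=\mathbb{N}-A$, yielding $a_C\cdot_{Eq}A=Uni_{n_0}=\mathbb{N}-A=C(A)$ for all $A\in Rec$.

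Finally I would verify that $a_C$ really is an element of the sub-pca $RE$. All the ingredients are representable in $RE$ itself: the constants $\emptyset$, $\mathbb{N}$ and the fixed r.e.\ set $Uni$; the operations $A\mapsto Uni_n\cap A$ and $A\mapsto Uni_n\cup A$ and the extraction of $Uni_n$ from $Uni$ (given $n$), all of which are Scott-continuous and preserve r.e.-ness; the pairing $[\cdot,\cdot]$ with its projections; and the definition-by-cases on the booleans {\sf T},{\sf F} used to steer the search. Assembling these with the combinators of $RE$ produces an element $a_C\in RE$.

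The main obstacle is the interaction between the middle and final paragraphs: the search is naturally a stateful loop over $n$, but application $\cdot_{Eq}$ only lets $a_C$ see the flat list of previous oracle answers, so the entire state (which $n$ is current, and whether we are on the $\cap$-query or the $\cup$-query) must be recovered from the length and contents of that list and the next query formed from $A$ accordingly. Getting this indexing right, and checking that every intermediate set stays r.e.\ so that $a_C$ lands in $RE$ and not merely in $\mathcal{P}(\mathbb{N})$, is where the real work lies; correctness of the search itself is immediate from the characterisation above.
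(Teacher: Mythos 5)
Your proposal is correct and follows essentially the same route as the paper: enumerate the r.e.\ sets as $Uni_n$ via the universal set $Uni$, use $Eq$-queries to test whether $Uni_n\cap A=\emptyset$ and $Uni_n\cup A=\mathbb{N}$, and package the resulting search as an oracle-computation element of $RE$ acting in $RE[Eq]$, which halts exactly when $A\in Rec$. The only cosmetic difference is that the paper merges the two tests into a single $Eq$-query via the pairing $Ic(A,B)=[[A\cup B,A\cap B],[\mathbb{N},\emptyset]]$, whereas you issue two queries per candidate $n$.
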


\begin{proof}
	We define $Eq$ as a map on a single argument $Eq'$, so $Eq'([A,B]) = Eq(A,B)$. For $A$ and $B$, we have that $B = C(A)$ if and only if $A \cup B = \mathbb{N}$ and $A \cap B = \emptyset$. To combine the two into a single check, we can write a function representable in $RE$ defined as $Ic(A,B) := [[A \cup B,A \cap B],[\mathbb{N},\emptyset]]$. Then $Eq'(Ic(A,B)) = {\sf T} \Leftrightarrow (B = C(A))$. Now we want an algorithm that checks this for all $Uni_n$, a $Z \in RE$ such that for all $n$ and $U_0 = \{0\},...,U_{n-1}=\{0\}$ we have $Z [A,U_0,...,U_{n-1}] = [{\sf F},Ic(A,Uni_n)]$ and $Z [A,U_0,...,U_{n-1},\{1\}] = [{\sf T},Uni_n]$. Then $ZA = Uni_n$ such that $Uni_n = C(A)$ (if it exists). But such a $Z \in RE$ can simply be given by 
	\begin{itemize}
	\item[] $Z := {<} x {>} {\sf if} \quad (lh(x)=0) \quad {\sf then} \quad Uni_0 \quad {\sf else}$ 
	\item[] $\quad \quad \quad \quad \quad {\sf if} \quad ({\sf lst}\text{ }x) \quad {\sf then} \quad [{\sf T},Ic({\sf fst} \text{ }x,Uni_{lh(x)})] \quad {\sf else} \quad [{\sf F},Uni_{lh(x)}]$
	\end{itemize}
	Where ${\sf fst}$ and ${\sf lst}$ respectively give the first and the last element of a sequence, and $lh$ gives the length of a sequence.
\end{proof}

\noindent Note that this algorithm does not halt if $A$ does not have a complement (is not in $Rec$), which is fine since $C$ is not defined there. Secondly, note that we cannot do this trick using $Eq|_{Rec}$, since by ranging over all $RE$ sets, we also need to check equality for non-$Rec$ sets. 

We take $R \subset 2^{\omega}$ to be the subset of recursive $0-1$ sequences. Similarly to the recursive sub-pca of $\mathcal{K}_2$ in \cite{Bauer}, we can see $R$ as a sub-pca of $2^{\omega}$. The usual bijection $b: 2^{\omega} \to \mathcal{P}(\mathbb{N})$ also gives a bijection $in = b|_{R}: R \to Rec$ where $Rec \subset RE$. So we have a way to relate $R$ to $RE$.

\begin{lemma}\label{RectoRE}
	The injective function $in: R \rightarrow RE$ forms a decidable applicative morphism from $R[Eq]$ to $RE[C]$
\end{lemma}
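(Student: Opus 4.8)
The plan is to reduce the statement to two representability facts via the universal property of the extension $R[Eq]$. By Theorem~\ref{extensiontheorem} applied to the pca $R$ and the (coded) equality endofunction $Eq'$, a decidable applicative morphism $R[Eq]\to RE[C]$ is the same datum as a decidable applicative morphism $\gamma:R\to RE[C]$ for which $Eq'$ is representable in $RE[C]$ with respect to $\gamma$; moreover, since $\iota_{Eq}$ and $\iota_C$ are identity relations, the factoring morphism has underlying relation exactly that of $\gamma$. So I would take $\gamma=in$, viewed as the single-valued relation $R\to RE\hookrightarrow RE[C]$, and establish the two required facts, after which the lemma follows immediately from the factorisation.

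First I would show that $in:R\to RE[C]$ is a decidable applicative morphism. The key observation is that, under the bijection $b$, the sub-pca $R\subset 2^{\omega}$ corresponds to $Rec\subset\mathcal{P}(\mathbb{N})$, and by Theorem~\ref{isoPN2o} the application of $2^{\omega}$ is, via the identity isomorphism with $\mathcal{P}(\mathbb{N})[C]$, computed from the Scott-application $\circ$ together with the complement $C$. The realizer witnessing this is an effective, hence r.e., element, so it lives in $RE[C]$; what has to be checked is that on recursive inputs $in(\alpha),in(\beta)$ the induced $\cdot_C$-computation only ever queries the complement oracle on recursive sets, where the partial function $C$ on $RE$ is defined. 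This is plausible because reading a bit $\alpha(j)$ of a recursive sequence amounts to deciding $j\in in(\alpha)$, which is precisely what a single complement query $C(in(\alpha))$ supplies, and the final value $in(\alpha\cdot\beta)$ is again recursive since $R$ is a sub-pca. Decidability of $in$ is then routine: the booleans of $R[Eq]$ map to fixed recursive sets, which are separated by a decider built from the equality realizer discussed next.

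Second, I would verify that $Eq'$ is representable in $RE[C]$ with respect to $in$. Through $in$ this is exactly the assertion, already noted at the start of Section~\ref{recursivesection}, that $Eq|_{Rec}$ is representable in $RE[C]$: the decidable-equality realizer $M$ from the proof of Theorem~\ref{Pw[C]decidable}, which uses only $\circ$, an intersection-emptiness test and the complement, sends recursive arguments to recursive intermediate sets, so its complement queries remain inside $Rec$ and it restricts to an honest r.e. realizer in $RE[C]$. Combining the two facts and invoking the universal property of Theorem~\ref{extensiontheorem} yields the decidable applicative morphism $R[Eq]\to RE[C]$ whose underlying relation is $in$.

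The main obstacle is the verification in the first step that the complement oracle is only ever called on recursive sets during the simulation of $2^{\omega}$-application inside $RE[C]$. The danger is that a Scott-continuous intermediate step might send a recursive input to a merely r.e.\ set, on which the partial complement is undefined, stalling the $\cdot_C$-computation; the heart of the argument is therefore to present the realizer so that it interrogates $C$ only on the two (recursive) input sets $in(\alpha)$ and $in(\beta)$, using the returned complements to make all subsequent bit-decisions, and performs everything else with genuinely recursive, $\circ$-representable operations.
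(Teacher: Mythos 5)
Your proposal is correct and takes essentially the same route as the paper: the paper likewise reduces the lemma to showing that $in:R\to RE[C]$ is a decidable applicative morphism for which $Eq$ is representable (the latter via $Eq|_{Rec}$ being realized in $RE[C]$, exactly as you argue), and its application realizer ${<}AB{>}\,Z[A,C(A)][B,C(B)]$ implements precisely your key idea -- the complement oracle is interrogated only on the two recursive inputs, after which an explicit r.e.\ set $Z$ makes all subsequent bit-decisions Scott-continuously from the positive information in $[A,C(A)]$ and $[B,C(B)]$. The only difference is one of explicitness: the paper writes out the coding ($X\mapsto 2X\cup(2C(X)+1)$, plus the concrete r.e.\ set $Z$ simulating $2^{\omega}$-application) that you leave as a routine construction.
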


\begin{proof}
	Take $(-)': RE[C] \rightarrow RE[C]$ to be the representable map $X \mapsto (2X) \cup (2C(X))$. We want to represent application $\cdot_{R}$ of $R \subset 2^{\omega}$ in $RE[C]$, by factoring it through the map $(-)'$. So we want to represent a map which for each $A$ and $B$ sends $(A',B')$ to $A \cdot_{Rec} B$ in $RE \subset \mathcal{P}(\mathbb{N})$. We do that by taking $Z \in \mathcal{P}(\mathbb{N})$ to be the set containing elements of the form $[v,[n,m]]$ such that $\exists u_0,u_1,...,u_{l+1}$:
	\[
	e_m := \{2i | u_i = 1\} \cup \{2i+1 | u_i = 0\}
	\]
	\[
	e_v := \{2[n,u_0,...,u_k],2[n,u_0,...,u_{k+1}]\} \cup \{2[n,u_0,...,u_{l-1}] | l \leq k\}
	\]
	Note that because of the finiteness of the $e_m$ and $e_p$ and the computability of the enumeration of finite sets $e_{(-)}$, we can computably check whether $[v,[n,m]]$ has this property, hence $Z \in RE$. Now note that if $A \cdot_{R} B \downarrow$, $(A \cdot_{R} B)(n) = 1$ precisely when $n \in Z A' B'$. So in that case $Z A' B' = A \cdot_{R} B$. We can conclude that $in: R \rightarrow RE[C]$ is an applicative morphism realized by ${<} AB {>} Z[A,C(A)][B,C(B)]$. Since $Eq|_{Rec}$ is realized in $RE[C]$, we have that the equality $Eq$ in $R$ is representable with respect to $in$. So $in: R[Eq] \rightarrow RE[C]$ is a decidable applicative morphism.
\end{proof}

\noindent We conclude the following.

\begin{corollary}\label{RecDiagram}
	We have a system of decidable applicative morphisms:
	\[
	\xymatrix{
		RE[Eq|_{Rec}] \ar[r]^{id} &
		RE[C] \ar[r]^{id} &
		RE[Eq]\\
		& R[Eq] \ar[u]^{in} &
	}
	\]
\end{corollary}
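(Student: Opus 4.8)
The plan is to recognize that Corollary~\ref{RecDiagram} is purely an assembly of the three results immediately preceding it, one for each arrow of the diagram, together with the closure of decidable applicative morphisms under the relevant constructions. First I would match each arrow to its source result: the vertical arrow $in:R[Eq]\to RE[C]$ is exactly Lemma~\ref{RectoRE}; the right-hand horizontal arrow $id:RE[C]\to RE[Eq]$ rests on Lemma~\ref{UniRec} (representability of $C$ in $RE[Eq]$); and the left-hand horizontal arrow $id:RE[Eq|_{Rec}]\to RE[C]$ rests on the observation, made just before Lemma~\ref{UniRec}, that $Eq|_{Rec}$ is representable in $RE[C]$ by the same argument that proved Theorem~\ref{Pw[C]decidable}.

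For the two horizontal arrows, the key mechanism is the universal property of the extension construction, Theorem~\ref{extensiontheorem}. Take the right-hand arrow: the canonical morphism $\iota_{Eq}:RE\to RE[Eq]$ is decidable and is the identity relation, and by Lemma~\ref{UniRec} the function $C$ is representable in $RE[Eq]$ with respect to $\iota_{Eq}$. The universal property of $RE[C]$ then yields a unique decidable factorisation $\iota_{Eq}=\gamma\circ\iota_C$ through $RE[C]$, and I would observe that because both $\iota_{Eq}$ and $\iota_C$ are the identity relation on the common underlying set $RE$, the factor $\gamma$ must itself be (isomorphic to) the identity relation; this is precisely the arrow drawn as $id:RE[C]\to RE[Eq]$. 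The left-hand arrow is handled identically, using representability of $Eq|_{Rec}$ in $RE[C]$ with respect to the identity morphism $\iota_C:RE\to RE[C]$ to factor through $RE[Eq|_{Rec}]$.

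Finally I would assemble the three arrows into the displayed diagram, noting that each is a decidable applicative morphism, which is all the corollary claims. The only point requiring care --- rather than a genuine obstacle --- is the verification in the previous paragraph that the universally obtained factor really is the identity relation and not merely some decidable morphism: this follows from the fact, used throughout the paper, that each map $\iota_f:\mathcal{A}\to\mathcal{A}[f]$ is the identity relation on the common carrier, so that post-composition with $\iota_f$ reflects the preorder $\leq$ and a factorisation of an identity relation through an identity relation is again an identity relation. With that settled, nothing further needs to be proved, since the corollary merely records the existence of the system.
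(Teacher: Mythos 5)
Your proposal is correct and matches the paper's (implicit) proof exactly: the corollary is stated with no separate argument because it is precisely the assembly of the remark preceding Lemma~\ref{UniRec} (representability of $Eq|_{Rec}$ in $RE[C]$), Lemma~\ref{UniRec}, and Lemma~\ref{RectoRE}, with the two horizontal identity arrows obtained from the universal property of Theorem~\ref{extensiontheorem} just as you describe. Your added verification that the universal factorisation is literally the identity relation (since each $\iota_f$ is the identity relation on the common carrier) is a correct and worthwhile spelling-out of a point the paper leaves tacit.
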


\subsection*{Acknowledgements}{\small}
This material is based upon work supported by the Air Force Office of Scientific Research, Air Force Materiel Command, USAF under Award No.
FA9550-14-1-0096

\end{document}